\tikzstyle{vertex}=[circle, black, draw, inner sep=0.5pt, minimum size=1pt]
\newcommand{\vertex}{\node[vertex]}
\definecolor{darkgray}{rgb}{0.25,0.25,0.25}
\definecolor{lightgray}{rgb}{0.75,0.75,0.75}
\newcommand{\N}{\mathbb{N}}
\newcommand{\Z}{\mathbb{Z}}
\newcommand{\R}{\mathbb{R}}
\newcommand{\PP}{\mathbb{P}}
\newcommand{\EE}{\mathbb{E}}
\newcommand\old[1]{}
\newcommand\floor[1]{{\left\lfloor #1 \right\rfloor}}
\def\eps{\epsilon}
\def\til{\widetilde}
\newtheorem{thm}{Theorem}[section]
\newtheorem{lemma}[thm]{Lemma}
\newtheorem{proposition}[thm]{Proposition}
\newtheorem{corollary}[thm]{Corollary}
\theoremstyle{definition}
\newtheorem*{definition}{Definition}
\newtheorem{remark}[thm]{Remark}
\newtheorem{conjecture}[thm]{Conjecture}
\title{The range of a rotor walk}
\author[1]{Laura Florescu\thanks{florescu@cims.nyu.edu}}
\author[2]{Lionel Levine\thanks{\url{http://www.math.cornell.edu/\~levine}. Partially supported by NSF grant DMS-1243606.}}
\author[3]{Yuval Peres\thanks{peres@microsoft.com}}
\affil[1]{New York University}
\affil[2]{Cornell University}
\affil[3]{Microsoft Research} 
\date{August 20, 2014}
\begin{document}

\maketitle

\abstract{
In a \emph{rotor walk} the exits from each vertex follow a prescribed periodic sequence.
On an infinite Eulerian graph embedded periodically in $\R^d$, we show that any simple rotor walk, regardless of rotor mechanism or initial rotor configuration, 
visits at least on the order of $t^{d/(d+1)}$ distinct sites in $t$ steps.  We prove a shape theorem for the rotor walk on the comb graph with i.i.d.\ uniform initial rotors, showing that the range is of order $t^{2/3}$ and the asymptotic shape of the range is a diamond. Using a connection to the mirror model and critical percolation, we show that rotor walk with i.i.d.\ uniform initial rotors is recurrent on two different directed graphs obtained by orienting the edges of the square grid, the Manhattan lattice and the $F$-lattice.  We end with a short discussion of the time it takes for rotor walk to cover a finite Eulerian graph.
}

\section{Introduction}

In a \emph{rotor walk} on a graph, the exits from each vertex follow a prescribed periodic sequence.
Such walks were first studied in \cite{WLB96} as a model of mobile agents exploring a territory, and in \cite{pddk} as a model of self-organized criticality.  Propp proposed rotor walk as a deterministic analogue of random walk, a perspective explored in \cite{CS,escaperates,holroyd-propp}. This paper is concerned with the following questions: 
How much territory does a rotor walk cover in $t$ steps? Conversely, how many steps does it take for a rotor walk to completely explore a given finite graph?

Let $G=(V,E)$ be a finite or infinite directed graph. For $v \in V$ let $E_v \subset E$ be the set of outbound edges from $v$, and let $\mathcal{C}_v$ be the set of all cyclic permutations of $E_v$.  A \emph{rotor configuration} on $G$ is a choice of outbound edge $\rho(v) \in E_v$ for each $v \in V$. A \emph{rotor mechanism} on $G$ is a choice of cyclic permutation $m(v) \in \mathcal{C}_v$ for each $v \in V$.  Given $\rho$ and $m$, the \emph{simple rotor walk} started at $X_0$ is a sequence of vertices $X_0, X_1, \ldots \in \mathbb{Z}^d$ and rotor configurations $\rho=\rho_0, \rho_1, \ldots$ such that for all integer times $t\geq 0$
	\[ \rho_{t+1}(v) = \begin{cases} m(v)(\rho_t(v)), & v=X_t \\ 
								\rho_t(v), & v \neq X_t \end{cases}
	\]
and
	\[ X_{t+1} = \rho_{t+1}(X_t)^+ \]
where $e^+$ denotes the target of the directed edge $e$.  In words, the rotor at $X_t$ ``rotates'' to point to a new neighbor of $X_{t}$ and then the walker steps to that neighbor.

In a simple rotor walk the sequence of exits from $v$ is periodic with period $\# E_v$. All rotor walks in this paper will be simple. (One can also study more general rotor walks in which the period is longer \cite{fs,holroyd-propp}.) We have chosen the retrospective rotor convention---each rotor at an already visited vertex indicates the direction of the most recent exit from that vertex---because it makes a few of our results such as Lemma~\ref{l.excursion2} easier to state.

\begin{figure}[H]
\captionsetup{width=0.8\textwidth}
\centering
\includegraphics[scale=.45]{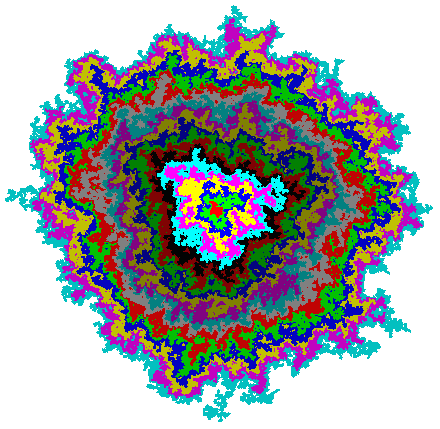}
\caption{The range of a clockwise uniform rotor walk on $\Z^2$ after $80$ returns to the origin: the mechanism $m$ cycles through the four neighbors in clockwise order (North, East, South, West), and the initial rotors $\rho(v)$ were oriented independently North, East, South or West, each with probability $1/4$. Colors indicate the first twenty excursion sets $A_1, \ldots, A_{20}$, defined in \textsection\ref{s.excursion}.}
\label{blobpic}
\end{figure}

 The \emph{range} of rotor walk at time $t$ is the set
	\[ R_t = \{X_1,\ldots,X_t \}. \]
We investigate the growth rate of the number of distinct sites visited, $\# R_t$.
A directed graph is called \emph{Eulerian} if each vertex has as many incoming as outgoing edges: indeg$(v)$=outdeg$(v)$ for all $v \in V$.  Any undirected graph can be made Eulerian by converting each undirected edge into a pair of oppositely oriented directed edges.

\begin{thm} 
\label{t.main}
For any Eulerian graph $G$ with a periodic embedding in $\R^d$, the number of distinct sites visited by a rotor walk in $t$ steps satisfies
	\[ \# R_t \geq c t^{d/(d+1)}. \]
for a constant $c>0$ depending only on $G$ (and not on $\rho$ or $m$).
\end{thm}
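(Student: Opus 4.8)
The plan is to bound the number of steps $t$ in terms of $N:=\#R_t$, proving the equivalent statement $t\le C N^{(d+1)/d}$. The starting point is the identity $t=\sum_{v} u_t(v)$, where $u_t(v)$ denotes the number of times the walk exits $v$ before time $t$ (each step is an exit from exactly one vertex). I would then combine a \emph{local} control on the exit counts with an \emph{isoperimetric} inequality that converts that control into the exponent $d/(d+1)$.

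First I would record the boundary estimate, which is where the rotor (rather than random) nature enters. If $v\in R_t$ has an out-edge to some $y\notin R_t$, then that edge is never traversed, so the rotor at $v$ never completes a full cycle and hence $u_t(v)\le \#E_v-1=O(1)$. The goal is to upgrade this to a bound governed by the \emph{depth} of $v$, i.e.\ its graph distance to $V\setminus R_t$, namely $u_t(v)\lesssim 1+\operatorname{depth}(v)$. To see why such a bound is natural, set $w:=u_t/\outdeg$ and use the Eulerian hypothesis: since the exits from each vertex cycle through its out-edges, each edge $(u,v)$ is used $w(u)\pm 1$ times, and balancing entries against exits at $v$ yields a discrete Poisson equation $\Delta w=(\delta_{X_t}-\delta_{X_0})+\operatorname{div}\eta$ in the interior of $R_t$, where $\eta$ is an edge field with $\|\eta\|_\infty\le 2$ coming from the rounding and $w$ vanishes off $R_t$. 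Thus $w$ is a nonnegative discrete potential, harmonic up to a single dipole and a divergence-form error, with $O(1)$ Dirichlet data on the boundary layer of $R_t$.

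Granting the depth bound, the exponent falls out of the isoperimetric inequality $\sum_{v\in A}\bigl(1+\operatorname{depth}_A(v)\bigr)\le C_d\,|A|^{(d+1)/d}$, valid for every finite $A$ in a periodically embedded graph. This follows by writing $\sum_v\operatorname{depth}(v)=\sum_{s\ge1}|A\ominus B_s|$ and applying Brunn--Minkowski to the erosions, $|A\ominus B_s|\le(|A|^{1/d}-cs)_+^d$, so that the ball is extremal and the sum is $\lesssim |A|^{(d+1)/d}$. Applying this to $A=R_t$ gives $t=\sum_v u_t(v)\lesssim\sum_v\bigl(1+\operatorname{depth}(v)\bigr)\lesssim N^{(d+1)/d}$, as required.

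The hard part is the local bound $u_t(v)\lesssim 1+\operatorname{depth}(v)$, i.e.\ the interior control of the odometer. The naive estimate from the Poisson equation---bounding $w$ by its boundary data plus (source)$\times$(expected exit time)---only gives $u_t(v)\lesssim\operatorname{depth}(v)^2$, which loses a full power and produces the weaker $t\lesssim N^{(d+2)/d}$; a crude energy/Cauchy--Schwarz estimate loses a comparable factor. Getting the sharp linear-in-depth bound amounts to a discrete Harnack inequality for the rotor odometer, equivalently to showing that the net flow $N_t(u,v)-N_t(v,u)$ across each edge---the discrete gradient of $w$---stays $O(1)$ rather than accumulating. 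This is exactly where the full strength of the Eulerian balance of a simple rotor walk, together with the divergence (rather than merely bounded) form of the error term, must be exploited, and I expect it to be the crux of the argument. A robust alternative, presumably matching the excursion sets $A_i$ of Figure~\ref{blobpic}, is to split the walk into excursions from $X_0$, bound each excursion's length by the current range, and sum over the $\approx N^{1/d}$ excursions; the same local-versus-isoperimetric tension reappears there as the problem of controlling how deeply each excursion penetrates.
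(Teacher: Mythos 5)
You have correctly reduced the theorem to two ingredients, and the second (the erosion/Brunn--Minkowski estimate $\sum_{v\in A}(1+\mathrm{depth}_A(v))\le C_d\,(\#A)^{(d+1)/d}$) is fine, but the first ingredient --- the linear-in-depth odometer bound $u_t(v)\lesssim 1+\mathrm{depth}(v)$ --- is exactly what you leave unproven, so as it stands this is a program rather than a proof. Moreover, the route you propose for it is shakier than the target: the $O(1)$ bound on net edge flow (the discrete gradient of $w$) is \emph{strictly stronger} than the depth bound, and nothing in your setup delivers it; the rotor/Eulerian structure only gives that each traversal count is within $O(1)$ of $w(u)$ per edge, which says nothing about cancellation of net flow over many passes, and there is no Harnack inequality on hand for this path-dependent, non-homogeneous equation with a dipole source. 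Your honest assessment that this is ``the crux'' is accurate, and the fallback you mention in one sentence (excursions from $X_0$) is in fact the paper's entire proof --- but you misidentify the difficulty there: one does not need to control how deeply each excursion penetrates, only to lower-bound each excursion's \emph{length}, which is much easier.

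Concretely, the paper defines an excursion as a run until the walk returns to $o$ exactly $\deg(o)$ times, and proves two short lemmas. First, in any (possibly incomplete) excursion each vertex $x$ is visited at most $\deg(x)$ times: the first directed edge ever reused must, by an Eulerian pigeonhole at the vertex where reuse occurs, be an edge at $o$, and that reuse ends the excursion. Second, because the rotor convention is retrospective, at the end of excursion $n$ every visited vertex has a rotor path to $o$, and an induction along such paths shows each previously visited vertex is visited \emph{exactly} $\deg(x)$ times in excursion $n+1$; hence the excursion sets satisfy $A_{n+1}\supseteq A_n\cup\partial A_n$, so $A_n\supseteq B(o,n)$ and excursion $n$ traverses every edge incident to $B(o,n-1)$. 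Consequently the number of completed excursions by time $t$ is at most $W^{-1}(t)=O(t^{1/(d+1)})$, where $W(r)$ sums the number of edges incident to $B(o,0),\dots,B(o,r-1)$, and combining with the per-excursion visit bound gives $u_t(x)\le u_t(o)+\deg(x)=O(t^{1/(d+1)})$ \emph{uniformly} in $x$, whence $\#R_t\ge t/O(t^{1/(d+1)})$. So isoperimetry enters only through the volume growth of balls around $o$, and no interior gradient control is needed at all. Interestingly, your depth bound is true and is a corollary of these same lemmas: if $x$ first appears in excursion $m$ out of $M$ completed excursions, then $A_{m+j}\supseteq B(x,j)$ gives $R_t\supseteq B(x,M-m)$, so $u_t(x)\le (M-m+2)\deg(x)\lesssim 1+\mathrm{depth}(x)$. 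You found a correct intermediate statement, but the tool that proves it is the combinatorial excursion decomposition you deferred, not elliptic estimates.
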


Priezzhev et al.\ \cite{pddk} and Povolotsky et al.\ \cite{pps} gave a heuristic argument that $\# R_t$ has order $t^{2/3}$ for the clockwise rotor walk on $\Z^2$ with uniform random initial rotors ($\rho(x) = \pm e_1, \pm e_2$ each with probability $1/4$, independently for each site $x$). Theorem~\ref{t.main} gives a lower bound of this order, and our proof is directly inspired by their argument. 

The upper bound promises to be more difficult because it depends on the initial rotor configuration $\rho$. Indeed, the next theorem shows that for certain $\rho$, the number of visited sites $\# R_t$ grows linearly in $t$.
Rotor walk is called \emph{recurrent} if $X_t=X_0$ for infinitely many $t$, and \emph{transient} otherwise. 

\begin{thm}
For any Eulerian graph $G$ and any mechanism $m$, if the initial rotor configuration $\rho$ has an infinite path of rotors directed toward $X_0$, then rotor walk is transient and \[ \# R_t \geq \frac{t}{\Delta}, \] 
where $\Delta$ is the maximal degree of a vertex in $G$.
\end{thm}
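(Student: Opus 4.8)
The plan is to build everything on a single structural consequence of the retrospective, cyclic rotor convention at the vertices of the given path. Write the path as $X_0 = v_0, v_1, v_2, \dots$, so that the hypothesis reads $\rho_0(v_i) = e_i^- := (v_i \to v_{i-1})$ for every $i \ge 1$. Since exits from any vertex $v$ run cyclically through all of $E_v$ with period $\#E_v$, and $\rho_0(v_i) = e_i^-$ in the retrospective convention, the inward edge $e_i^-$ is the \emph{last} edge used in each complete cycle of exits from $v_i$. Hence, at every time $t$ and every $i \ge 1$, the traversal counts satisfy $N_t(v_i \to v_{i-1}) \le N_t(v_i \to w)$ for every other out-edge $(v_i \to w)$; in particular the inward count never exceeds the forward count $N_t(v_i \to v_{i+1})$. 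Establishing this ``inward-is-slowest'' inequality carefully (via the $\floor{k/\#E_{v_i}}$ versus $\lceil k/\#E_{v_i}\rceil$ split of the cyclic exits) is the first step, and it is the engine for both conclusions.

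For transience I would argue by contradiction, assuming $X_t = X_0$ for infinitely many $t$. Infinitely many returns force $v_0$ to be exited infinitely often, so (exits being cyclic) the edge $v_0 \to v_1$ is used infinitely often, hence $v_1$ is visited infinitely often; iterating along the path shows every $v_i$ is visited, and therefore exited, infinitely often. I then restrict to the return times, at which the trajectory is a closed walk and the counts $\{N_t(e)\}$ form a circulation. In the model case where $G$ is the one-sided path itself, a circulation on a tree is identically zero, so $N_t(v_i\to v_{i+1}) = N_t(v_{i+1}\to v_i)$ for all $i$; feeding this into the inward-is-slowest inequality at $v_{i+1}$ shows the forward counts $F_i := N_t(v_i\to v_{i+1})$ are nondecreasing in $i$. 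As only finitely many vertices are reached by time $t$, $F_i$ vanishes beyond the reach, forcing $F_i \equiv 0$, i.e.\ no edge is ever used — the desired contradiction, so no return time exists.

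The main obstacle is upgrading this clean one-dimensional computation to an arbitrary Eulerian $G$, where the path vertices also carry off-path edges that can reroute flow and destroy the identity $F_i = B_i$. I would replace ``net flow across the edge $\{v_i,v_{i+1}\}$'' with ``net flow out of the nested sets $S_i = \{v_0,\dots,v_i\}$,'' which is again zero at a return time, and use the inward-is-slowest property at each $v_j \in S_i$ to bound the off-path contribution to the boundary of $S_i$ and recover the monotone-forward-flow contradiction. Making this accounting rigorous — proving that off-path edges cannot supply the infinitely many inward crossings that recurrence would demand — is where the real work lies; since the hypothesis is purely combinatorial (no embedding, only $\Delta < \infty$), the argument may rely only on the cyclic structure and conservation. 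One could alternatively try to route this through the excursion analysis of \textsection\ref{s.excursion}.

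Finally, for $\#R_t \ge t/\Delta$, I would exploit the transience just proved together with the same inequality. Writing $u_t(v)$ for the number of exits from $v$ before time $t$, we have $\sum_{v} u_t(v) = t$, so it suffices to bound the exit counts so that $\sum_v u_t(v) \le \Delta\,\#R_t$. The cleanest route is to show the walk makes \emph{no} inward crossing of any path edge, i.e.\ $N_t(v_{i+1}\to v_i) \equiv 0$: the inward edge at $v_i$ fires only upon completing a full cycle there, and transience (together with inward-is-slowest propagated outward) should preclude this, so each path vertex is exited fewer than $\deg(v_i) \le \Delta$ times and the ray functions as a one-way barrier. I would then charge the $t$ exits to distinct discovered sites via the forward progress this barrier forces. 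The hardest point here is controlling the walk's excursions into the off-path bulk before it commits to escaping: one must rule out long, range-inefficient detours, and I expect this — rather than the charging itself — to be the crux of the linear lower bound.
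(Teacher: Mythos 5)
Your ``inward-is-slowest'' observation is correct, but the proposal has a genuine gap at exactly the two places you flag as ``the real work,'' and in the transience half the intermediate target is actually false. Transience here does not mean ``no return time exists'': take $G=\Z$ (bidirected), $o=0$, rotors at all $i\geq 1$ pointing left toward $o$, with $\rho(0)$ pointing right and $\rho(-1)$ pointing left. The walk goes $0,-1,0,1,2,3,\dots$, returning to $o$ once before escaping. So at a return time the edge counts do form a circulation, but other edges legitimately carry that flow and no contradiction arises; your one-sided-ray computation (where no return can occur) does not survive the nested-set generalization, because on a general Eulerian graph the walk may return up to $\deg(o)-1$ times. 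The correct invariant is at the level of \emph{excursions}: the walk can never accumulate $\deg(o)$ returns to $o$. The paper proves this by induction along the rotor path (Lemma~\ref{l.excursion2}): if the first excursion completed, then each incoming edge of each path vertex would be traversed exactly once, forcing every vertex on the infinite path to be visited exactly its degree many times --- impossible in finitely many steps (Corollary~\ref{c.incomplete}). Your closing remark about routing through \textsection\ref{s.excursion} is in fact the proof; the circulation plan as stated chases a false statement.

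For the range bound, your barrier claim $N_t(v_{i+1}\to v_i)\equiv 0$ is provable (it is the contrapositive of the same cascade: firing that inward edge requires $\deg(v_{i+1})$ entries into $v_{i+1}$, one of which must come from $v_{i+2}$, and so on forever), but it constrains only the path vertices, so it cannot by itself yield $\#R_t\geq t/\Delta$ --- the ``range-inefficient detours'' into the off-path bulk that you identify as the crux remain uncontrolled. The missing ingredient is the paper's Lemma~\ref{l.excursion1}: before the first excursion completes, no directed edge is traversed twice, hence \emph{every} vertex $x$ (on or off the path) is visited at most $\deg(x)$ times. This is where the Eulerian hypothesis does its work: the first vertex to repeat an outgoing edge has been visited $\deg+1$ times while each of its $\indeg=\deg$ incoming edges was used at most once, so one visit was not an entry, forcing that vertex to be $o$ and the time to be $T(1)$. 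Notably, your inward-is-slowest inequality uses only the retrospective convention and never invokes $\indeg=\outdeg$, which is a symptom of this missing global bound. Once Corollary~\ref{c.incomplete} gives $T(1)=\infty$, no charging scheme is needed: $t=\sum_x u_t(x)\leq \Delta\,\#R_t$ gives the linear lower bound, and $u_t(o)<\deg(o)$ gives transience outright.
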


One can also ask about the shape of the random set $R_t$, pictured in Figure~\ref{blobpic}. Each pixel in this figure corresponds to a vertex of $\Z^2$, and $R_t$ is the set of all colored pixels (the different colors correspond to \emph{excursions} of the rotor walk, defined in \textsection\ref{s.excursion}); the mechanism $m$ is clockwise, and the initial rotors $\rho$ are i.i.d.\ uniform.
Although the set $R_t$ of Figure~\ref{blobpic} looks far from round, Kapri and Dhar have conjectured that for very large $t$ it becomes nearly a circular disk! From now on, by \textbf{uniform rotor walk} we will always mean that the initial rotors $\{\rho(v)\}_{v\in V}$ are independent and uniformly distributed on $E_v$.

\begin{conjecture}[Kapri-Dhar \cite{kapri-dhar}]  
\label{c.KD}
The set of sites $R_t$ visited by the clockwise uniform rotor walk in $\Z^2$ is asymptotically a disk: There exists a constant $c$ such that for any $\eps>0$, \[ P \{ D_{(c-\eps)t^{1/3}} \subset R_t \subset D_{(c+\eps)t^{1/3}} \} \to 1 \]
as $t \to \infty$, where $D_r = \{(x,y) \in \Z^2 \,:\, x^2 + y^2 < r^2\}$.
\end{conjecture}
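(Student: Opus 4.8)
The plan is to prove the two containments separately and then upgrade them to a circular shape, using the excursion decomposition of \textsection\ref{s.excursion} as the organizing tool. Write the range after the walk's $n$-th return to the origin as the increasing union $R = A_1 \cup \dots \cup A_n$ of excursion sets, prove a shape theorem for this aggregate as $n \to \infty$, and then translate back to the time parametrization. The guiding principle is that, because of the abelian property, the cumulative occupation of the rotor walk after many excursions should be governed by a discrete obstacle problem of exactly the type that appears in rotor-router aggregation and the divisible sandpile. For those models Levine and Peres proved that a point source produces a limiting shape that is a Euclidean ball, the rotational symmetry coming from the fact that the continuum scaling limit of the odometer solves an isotropic free-boundary problem. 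So the heart of the proposal is to show that the odometer $u(x)$ of the long rotor walk (the number of exits from $x$) obeys, up to lower-order fluctuations, the same obstacle problem, forcing its support to be a disk.

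First I would make the lower containment $D_{(c-\eps)t^{1/3}} \subset R_t$ quantitative. Theorem~\ref{t.main} already supplies $\# R_t \ge c't^{2/3}$ in dimension two, which is the correct area; the task is to show this area fills a disk rather than, say, a thin filament. For this I would exploit the near-mean-value property of rotor walk: after $u(x)$ exits from $x$ the mechanism has distributed those exits among the neighbors of $x$ to within an additive error of $1$ per neighbor, so the discrete Laplacian $\Delta u$ equals (deposited density minus source) plus an $O(1)$ rotor-discrepancy term. Summing the discrepancy over a ball and using that the initial rotors are i.i.d.\ uniform — hence the discrepancies have mean zero and decorrelate — I would argue that $u$ is, to leading order, the solution of a Poisson problem whose right-hand side is a point source at the origin minus a uniform density on the occupied region, a problem whose free boundary is a circle.

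The upper containment $R_t \subset D_{(c+\eps)t^{1/3}}$ is where I expect the real difficulty to lie, and it is exactly the direction the paper flags as hard because it depends on $\rho$. One must rule out rare ``fingers'' in which the walk escapes far beyond the typical radius; unlike aggregation, a single long walk can in principle shoot out a long excursion. I would try to control this by an isoperimetric/counting estimate on excursion lengths: an excursion reaching radius $R$ deposits rotor work of order $R^2$, and the abelian property caps the total work available by time $t$ at $t$, so only $O(t/R^2)$ excursions can reach radius $R$. This recovers the correct \emph{order} $t^{1/3}$ of the radius, but at $R=(c+\eps)t^{1/3}$ the counting bound is still of order $t^{1/3}$, not small, so it does not by itself yield the sharp containment.

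The main obstacle, and the reason the conjecture remains open, is precisely this fluctuation control: I would need to show that the boundary fluctuations are $o(t^{1/3})$, which requires quantitative decorrelation of the initial rotors along the advancing frontier. The rotor walk is deterministic given $\rho$, so there is no intrinsic martingale to exploit, and the only randomness lies in the initial configuration, whose influence is mediated through a highly nonlinear, long-range dynamics. Establishing that the random discrepancies average out on the scale $t^{1/3}$ — rather than conspiring to roughen or distort the boundary — is the crux, and it is here that a genuinely new idea, perhaps an exact bijection with a solvable lattice model in the spirit of the mirror-model and critical-percolation connection used elsewhere in this paper, appears to be needed.
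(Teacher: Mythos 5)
The statement you were asked about is Conjecture~\ref{c.KD}, which the paper explicitly leaves open: the authors write ``We are a long way from proving anything like Conjecture~\ref{c.KD},'' and they prove no circular shape theorem anywhere in the paper. So there is no paper proof to compare against, and your proposal must stand or fall as an attempted proof of an open problem. It falls, and you candidly say so yourself in the last paragraph; but it is worth pinpointing that the failure is not merely the admitted ``fluctuation control'' at the end --- your central mechanism is already unsound. You argue that the odometer of a long rotor walk solves, up to $O(1)$ rotor discrepancies that ``average out,'' an isotropic obstacle problem with a uniform deposited density, as in Levine--Peres rotor aggregation, and that this forces a circular free boundary. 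The analogy breaks in two ways. First, a single rotor walk does not deposit one particle per site: by Lemma~\ref{l.excursion3}, after $n$ completed excursions each previously visited site $x$ has been visited essentially $n\deg(x)$ times, so the occupation profile is a growing cone on the visited set rather than the solution of an obstacle problem with an a priori density constraint; the shape of the excursion sets $A_n$ is determined entirely by how the \emph{frontier} rotors route the walk, which is exactly the information your $O(1)$ discrepancy term discards. Second, and decisively, the isotropy conclusion cannot follow from any argument that does not use the i.i.d.\ uniformity of $\rho$ quantitatively: the remark after Theorem~\ref{t.lowerbound} (with Figure~\ref{f.diamondZ2}) constructs a clockwise mechanism and initial rotors on $\Z^2$ for which every excursion set is exactly a diamond and $\# R_t \sim (8/3)^{1/3} t^{2/3}$. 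Your discrepancy-summation step, as written, would apply verbatim to that configuration and ``prove'' a disk where the true shape is a diamond. Hence the discrepancies are not a lower-order correction; they carry the entire shape, and the randomness of $\rho$ must enter at the leading order, not as an error estimate.

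Your upper-containment counting argument is fine as far as it goes, and you correctly note it only recovers the order $t^{1/3}$ of the radius, which the paper already has from Theorem~\ref{t.main} (lower bound) in a stronger pointwise form. For calibration, note what the paper actually manages to prove in this direction: a full shape theorem only on the comb (Theorem~\ref{t.comb}), where the walk restricted to the axis and to each tooth is a one-dimensional rotor walk whose zigzag extents $x_i$, $y_{k,j}$ are sums of i.i.d.\ increments, so Chernoff bounds give the needed $\sqrt{n\log n}$ fluctuation control --- and the resulting shape is a \emph{diamond}, not a disk. That contrast is instructive: even where the randomness can be controlled, nothing generic produces isotropy, so a proof of Kapri--Dhar on $\Z^2$ would need a genuinely new mechanism (your closing suggestion of an exact correspondence in the spirit of the mirror model of \textsection\ref{s.pinball} is at least pointed in a plausible direction, but it is a hope, not a step).
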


\begin{figure}[h!]
\captionsetup{width=0.8\textwidth}
\begin{minipage}{7in}
  \centering
$\vcenter{\hbox{
 \begin{tikzpicture}[scale=0.4] 
 \draw[color=black] (-6,-5) -- (-6,5); \draw[color=black] (-5,-5) -- (-5,5);
  \draw[color=black] (-4,-5) -- (-4,5); \draw[color=black] (-3,-5) -- (-3,5);
   \draw[color=black] (-2,-5) -- (-2,5); \draw[color=black] (-1,-5) -- (-1,5);
    \draw[color=black] (0,-5) -- (0,5); \draw[color=black] (1,-5) -- (1,5);
     \draw[color=black] (2,-5) -- (2,5); \draw[color=black] (3,-5) -- (3,5);
      \draw[color=black] (4,-5) -- (4,5); \draw[color=black] (5,-5) -- (5,5);
       \draw[color=black] (6,-5) -- (6,5); \draw[color=black] (-7,0) -- (7,0);
       \node[color=black] at (0,0.4) {$O$};
       \node[color=black] at (7,-0.3) {$x$}; 
               \vertex at (-1,0) {};
               \vertex at (-2,0) {};
               \vertex at (-3,0) {};
               \vertex at (-4,0) {};
               \vertex at (-5,0) {};
               \vertex at (-6,0) {};
               \vertex at (0,0) {};
	     \vertex at (1,0) {};
               \vertex at (2,0) {};
               \vertex at (3,0) {};
               \vertex at (4,0) {};
               \vertex at (5,0) {};
               \vertex at (6,0) {};

         \vertex at (-1,-1) {};
               \vertex at (-2,-1) {};
               \vertex at (-3,-1) {};
               \vertex at (-4,-1) {};
               \vertex at (-5,-1) {};
               \vertex at (-6,-1) {};
               \vertex at (0,-1) {};
	     \vertex at (1,-1) {};
               \vertex at (2,-1) {};
               \vertex at (3,-1) {};
               \vertex at (4,-1) {};
               \vertex at (5,-1) {};
               \vertex at (6,-1) {};
                        
                                 \vertex at (-2,-2) {};
               \vertex at (-1,-2) {};
               \vertex at (-3,-2) {};
               \vertex at (-4,-2) {};
               \vertex at (-5,-2) {};
               \vertex at (-6,-2) {};
               \vertex at (0,-2) {};
	     \vertex at (1,-2) {};
               \vertex at (2,-2) {};
               \vertex at (3,-2) {};
               \vertex at (4,-2) {};
               \vertex at (5,-2) {};
               \vertex at (6,-2) {};
   
       \vertex at (-1,-3) {};
               \vertex at (-2,-3) {};
               \vertex at (-3,-3) {};
               \vertex at (-4,-3) {};
               \vertex at (-5,-3) {};
               \vertex at (-6,-3) {};
               \vertex at (0,-3) {};
	     \vertex at (1,-3) {};
               \vertex at (2,-3) {};
               \vertex at (3,-3) {};
               \vertex at (4,-3) {};
               \vertex at (5,-3) {};
               \vertex at (6,-3) {};
                          \vertex at (-1,-4) {};
               \vertex at (-2,-4) {};
               \vertex at (-3,-4) {};
               \vertex at (-4,-4) {};
               \vertex at (-5,-4) {};
               \vertex at (-6,-4) {};
               \vertex at (0,-4) {};
	     \vertex at (1,-4) {};
               \vertex at (2,-4) {};
               \vertex at (3,-4) {};
               \vertex at (4,-4) {};
               \vertex at (5,-4) {};
               \vertex at (6,-4) {};
                                    \vertex at (-1,4) {};
               \vertex at (-2,4) {};
               \vertex at (-3,4) {};
               \vertex at (-4,4) {};
               \vertex at (-5,4) {};
               \vertex at (-6,4) {};
               \vertex at (0,4) {};
	     \vertex at (1,4) {};
               \vertex at (2,4) {};
               \vertex at (3,4) {};
               \vertex at (4,4) {};
               \vertex at (5,4) {};
               \vertex at (6,4) {};
               
                        \vertex at (-1,3) {};
               \vertex at (-2,3) {};
               \vertex at (-3,3) {};
               \vertex at (-4,3) {};
               \vertex at (-5,3) {};
               \vertex at (-6,3) {};
               \vertex at (0,3) {};
	     \vertex at (1,3) {};
               \vertex at (2,3) {};
               \vertex at (3,3) {};
               \vertex at (4,3) {};
               \vertex at (5,3) {};
               \vertex at (6,3) {};
               
                        \vertex at (-1,2) {};
               \vertex at (-2,2) {};
               \vertex at (-3,2) {};
               \vertex at (-4,2) {};
               \vertex at (-5,2) {};
               \vertex at (-6,2) {};
               \vertex at (0,2) {};
	     \vertex at (1,2) {};
               \vertex at (2,2) {};
               \vertex at (3,2) {};
               \vertex at (4,2) {};
               \vertex at (5,2) {};
               \vertex at (6,2) {};
                      \vertex at (-1,1) {};
               \vertex at (-2,1) {};
               \vertex at (-3,1) {};
               \vertex at (-4,1) {};
               \vertex at (-5,1) {};
               \vertex at (-6,1) {};
               \vertex at (0,1) {};
	     \vertex at (1,1) {};
               \vertex at (2,1) {};
               \vertex at (3,1) {};
               \vertex at (4,1) {};
               \vertex at (5,1) {};
               \vertex at (6,1) {};  
            \end{tikzpicture}
            }}$
                \hspace*{.2in}
            $\vcenter{\hbox{\includegraphics[height=2.5in]{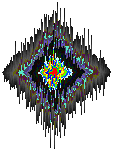}}}$
\end{minipage}
\caption{A piece of the comb graph (left) and the set of sites visited by a uniform rotor walk on the comb graph in 10000 steps.
\label{f.comb}}
\end{figure}

We are a long way from proving anything like Conjecture \ref{c.KD}, but we can show that an analogous shape theorem holds on a much simpler graph, the two dimensional comb (Figure~\ref{f.comb}).

\begin{thm} 
\label{t.combintro}
For uniform rotor walk on the comb graph, $\# R_t$ has order $t^{2/3}$ and the asymptotoic shape of $R_t$ is a diamond.
\end{thm}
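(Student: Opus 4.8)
The lower bound $\#R_t \ge c\, t^{2/3}$ is immediate: the comb graph is Eulerian (each undirected edge becomes a pair of oppositely oriented directed edges) and has a periodic embedding in $\R^2$, so Theorem~\ref{t.main} applies with $d=2$. The content of the theorem is therefore the matching upper bound $\#R_t \le C\, t^{2/3}$, which must use the i.i.d.\ uniform law of $\rho$, together with the identification of the limit shape. My plan is to analyze the rotor \emph{odometer} $u_t(v)$, the number of times the walk has exited $v$ up to time $t$, and to show that $R_t$ is essentially its support $\{u_t > 0\}$.

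First I would reduce the process to the one-dimensional dynamics on a single tooth. A tooth at horizontal coordinate $x$ is a copy of $\Z$ entered only through its base $(x,0)$; every excursion leaving the base into the tooth returns to the base, so the walk decomposes into base visits interleaved with tooth excursions (this is where the retrospective convention and the excursion lemmas are convenient). Because the base has degree $4$ and its rotor cycles, of its $u_x := u_\infty(x,0)$ departures a fraction $\tfrac14 + O(1)$ are launched upward and a fraction $\tfrac14$ downward, so the number of up-excursions is $m_x \asymp u_x$. The key one-dimensional fact I would establish is that after $m$ excursions the depth $H$ reached in a tooth satisfies $H \asymp m$. On a degree-$2$ path the exits up and down from each vertex differ by at most one, and I would show that these $O(1)$ discrepancies do not cancel but accumulate coherently, forcing the tooth odometer to be discretely linear, $u(x,y) \approx \tfrac{u_x}{2H_x}(H_x - |y|)$, and forcing the successive excursion depths to increase by a definite positive rate. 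For i.i.d.\ uniform initial rotors this rate concentrates: the number of ``flat'' steps is a sum of nearly independent parity events, so $H_x = (c+o(1))\,m_x$ with high probability for a universal constant $c$.

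Next I would treat the spine. Writing the balance ``arrivals $=$ departures'' at each spine vertex $x \neq 0$ and inserting the linear tooth profile (which makes $u(x,\pm1) \approx u_x/2$), the two teeth return exactly half of the base's departures, so the balance reduces to the discrete Laplace equation $u_{x-1} - 2u_x + u_{x+1} \approx 0$, all up to $O(1)$ errors from rotor discrepancies. Hence $u_x$ is a tent function $u_x \approx u_0\,(1 - |x|/R)$ vanishing at the horizontal radius $\pm R$, with peak $u_0 \asymp R$ fixed by the number of returns to the origin. Combining $H_x \asymp u_x$ with the tent profile gives $H_x \asymp R - |x|$; that is, after rescaling by $R$ the visited region is the rhombus $\{|x| + c'|y| \le R\}$, a diamond. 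Summing the profile then yields $\#R_t = \sum_x 2H_x \asymp R^2$ and $t = \sum_v u_\infty(v) \asymp \sum_{|x|\le R}(R-|x|)^2 \asymp R^3$; eliminating $R$ gives $R \asymp t^{1/3}$ and $\#R_t \asymp t^{2/3}$.

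The main obstacle is the one-dimensional tooth estimate and its probabilistic upgrade. Proving $H \asymp m$ rigorously means controlling how the bounded per-vertex rotor discrepancy accumulates over a tooth whose rotors are random, and then promoting the resulting law of large numbers to a statement holding \emph{simultaneously} for all $\asymp R$ relevant teeth. Since the teeth have independent initial rotors, I expect a union bound over $|x| \le R$ against an exponential concentration estimate to suffice for the shape theorem, but care is needed because the spine progression and the tooth filling are not independent: the mass a tooth receives depends on the random depths of neighboring teeth through the harmonic profile. I would decouple this feedback by first proving two-sided a priori bounds $c_1(R-|x|) \le H_x \le c_2(R-|x|)$ (these already give $\#R_t \asymp t^{2/3}$ and confine $R_t$ between two diamonds), and only then sharpen the constants to obtain the limiting shape.
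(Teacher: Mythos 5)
Your skeleton (reduce to one-dimensional dynamics on the spine and on each tooth, establish linear odometer profiles, deduce a diamond of radius $\asymp t^{1/3}$) matches the paper's strategy in outline, but the central analytic step has a genuine gap. You derive the tent profile for the spine odometer from the balance equation $u_{x-1}-2u_x+u_{x+1}\approx 0$ ``up to $O(1)$ errors from rotor discrepancies.'' This does not suffice: a per-site error of size $O(1)$ in the second difference, integrated twice over an interval of length $R$, can shift the solution by $O(R^2)$, which swamps the profile itself since $u_0\asymp R$. The same objection applies to your tooth estimate $H\asymp m$, which you propose to get by showing the $O(1)$ discrepancies ``accumulate coherently'' --- that coherence is exactly what needs proof. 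The paper avoids error accumulation entirely via an exact combinatorial description of rotor walk on $\Z$ (Figure~\ref{f.zigzag}): with the retrospective convention, the walk viewed on the $x$-axis follows a deterministic zigzag $0\to x_1\to x_{-1}\to x_2\to\cdots$, where $x_i$ is the $i$th site whose effective rotor sends the walker left before right; consequently the spine odometer after $m$ excursions is \emph{exactly} $(m-K)^+$ for $x\in(x_K,x_{K+1}]$, with no error term. All randomness is confined to the i.i.d.-driven positions $x_i$ (and the analogous tooth heights $y_{k,j}$), which concentrate at $2|i|$ by a Chernoff bound plus a union bound (Lemmas~\ref{smallprob} and~\ref{shape}); this is what pins the tent slope at $1/2$ and yields the profile $f_m(x,y)=(m-|x|/2-|y|/2)^+$ of \eqref{e.thediamond} with $\sqrt{n\log n}$ fluctuations.

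Relatedly, the feedback you worry about in your last paragraph --- that the mass a tooth receives depends on the random depths of neighboring teeth --- does not actually exist, and your proposed two-stage bootstrap (a priori two-sided bounds, then sharpening) is unnecessary. In the excursion decomposition, the $x$-coordinate process is autonomous: viewed at the times it moves horizontally, it is a rotor walk on $\Z$ driven only by the spine rotors (each spine vertex is ``left-first'' with probability $1/2$, independently), and the number of zigzags performed in the tooth at $x$ during the first $m$ excursions is exactly $u_m(x,0)$, determined by the spine alone. Conditional on the spine odometer, the teeth are analyzed independently using only their own rotors. This exact decoupling, not a harmonicity-with-errors argument, is what lets the paper prove the sharp shape theorem (Theorem~\ref{t.comb}) in one pass, with Lemma~\ref{l.theend} converting the excursion count $M$ into the time scale $t=\floor{\frac{16}{3}n^3}$ via $\|f_m\|=\frac{16}{3}m^3+O(m^2)$. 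To repair your proposal you would need either to import this zigzag identity or to prove a genuinely new bound showing the rotor discrepancies in the Laplacian cancel to $o(1)$ on average; as written, the step from approximate harmonicity to the tent profile fails.
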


For the precise statement, see \textsection\ref{s.comb}. This result contrasts with random walk on the comb, for which the expected number of sites visited is only on the order of $t^{1/2} \log t$ as shown by Pach and Tardos \cite{pach-tardos}.
Thus the uniform rotor walk explores the comb more efficiently than random walk. (On the other hand, it is conjectured to explore $\Z^2$ \emph{less} efficiently than random walk!)

The main difficulty in proving upper bounds for $\# R_t$ lies in showing that the uniform rotor walk is recurrent. This seems to be a difficult problem in $\Z^2$, but we can show it for two different directed graphs obtained by orienting the edges of $\Z^2$: the Manhattan lattice and the $F$-lattice, pictured in Figure~\ref{f.directed}.

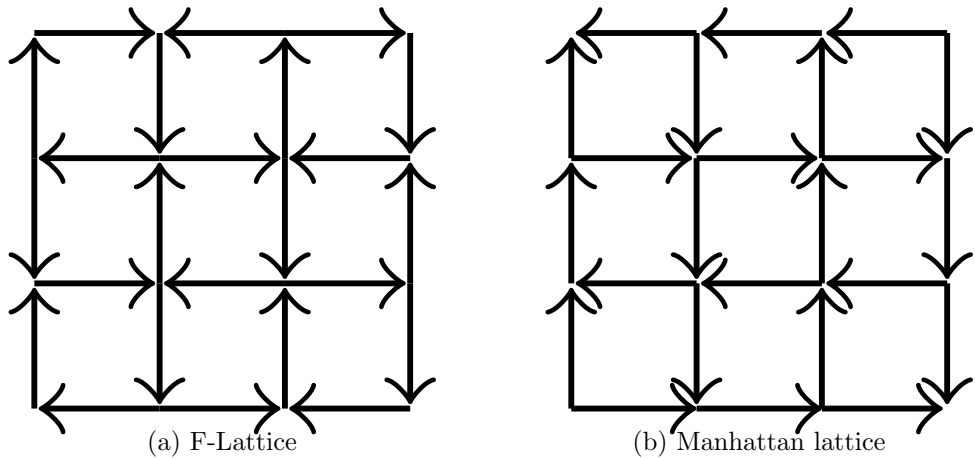
\begin{figure}[h!]
\centering
    \begin{subfigure}[b]{0.38\textwidth}
        \centering
        \resizebox{\linewidth}{!}{
            \begin{tikzpicture}[->,scale=0.3]
 \draw[color=black, shorten >=0.3pt] (-2,-2) -- (-2,-1);\draw[color=black, shorten >=0.3pt] (-1,-2) -- (0,-2);\draw[color=black, shorten >=0.3pt] (-1,-2) -- (-2,-2);\draw[color=black, shorten >=0.3pt] (0,-2) -- (0,-1);\draw[color=black, shorten >=0.3pt] (1,-2) -- (0,-2);\draw[color=black, shorten >=0.3pt] (-2,-1) -- (-1,-1);\draw[color=black, shorten >=0.3pt] (-1,-1) -- (-1,0);\draw[color=black, shorten >=0.3pt] (-1,-1) -- (-1,-2);\draw[color=black, shorten >=0.3pt] (0,-1) -- (1,-1);\draw[color=black, shorten >=0.3pt] (0,-1) -- (-1,-1);\draw[color=black, shorten >=0.3pt] (1,-1) -- (1,0);\draw[color=black, shorten >=0.3pt] (1,-1) -- (1,-2);\draw[color=black, shorten >=0.3pt] (-2,0) -- (-2,1);\draw[color=black, shorten >=0.3pt] (-2,0) -- (-2,-1);\draw[color=black, shorten >=0.3pt] (-1,0) -- (0,0);\draw[color=black, shorten >=0.3pt] (-1,0) -- (-2,0);\draw[color=black, shorten >=0.3pt] (0,0) -- (0,1);\draw[color=black, shorten >=0.3pt] (0,0) -- (0,-1);\draw[color=black, shorten >=0.3pt] (1,0) -- (0,0);\draw[color=black, shorten >=0.3pt] (-2,1) -- (-1,1);\draw[color=black, shorten >=0.3pt] (-1,1) -- (-1,0);\draw[color=black, shorten >=0.3pt] (0,1) -- (1,1);\draw[color=black, shorten >=0.3pt] (0,1) -- (-1,1);\draw[color=black, shorten >=0.3pt] (1,1) -- (1,0);
            \end{tikzpicture}
        }
        \caption{F-Lattice}
        \label{fig:subfig8}
    \end{subfigure}
   $\quad$
    \begin{subfigure}[b]{0.38\textwidth}
    \centering
        \resizebox{\linewidth}{!}{
            \begin{tikzpicture}[->,scale=0.3]
            \draw[color=black] (-2,-2) -- (-1,-2);
\draw[color=black,= shorten >=0.3pt] (-1,-2) -- (0,-2);
\draw[color=black, shorten >=0.3pt] (0,-2) -- (1,-2);\draw[color=black, shorten >=0.3pt] (-1,-1) -- (-2,-1);\draw[color=black, shorten >=0.3pt] (0,-1) -- (-1,-1);\draw[color=black, shorten >=0.3pt] (1,-1) -- (0,-1);\draw[color=black, shorten >=0.3pt] (-2,0) -- (-1,0);\draw[color=black, shorten >=0.3pt] (-1,0) -- (0,0);\draw[color=black, shorten >=0.3pt] (0,0) -- (1,0);\draw[color=black, shorten >=0.3pt] (-1,1) -- (-2,1);\draw[color=black, shorten >=0.3pt] (0,1) -- (-1,1);\draw[color=black, shorten >=0.3pt] (1,1) -- (0,1);\draw[color=black, shorten >=0.3pt] (-2,-2) -- (-2,-1);\draw[color=black, shorten >=0.3pt] (-2,-1) -- (-2,0);\draw[color=black, shorten >=0.3pt] (-2,0) -- (-2,1);
\draw[color=black, shorten >=0.3pt] (-1,-1) -- (-1,-2);\draw[color=black, shorten >=0.3pt] (-1,0) -- (-1,-1);\draw[color=black, shorten >=0.3pt] (-1,1) -- (-1,0);\draw[color=black, shorten >=0.3pt] (0,-2) -- (0,-1);\draw[color=black, shorten >=0.3pt] (0,-1) -- (0,0);\draw[color=black, shorten >=0.3pt] (0,0) -- (0,1);
\draw[color=black, shorten >=0.3pt] (1,-1) -- (1,-2);\draw[color=black, shorten >=0.3pt] (1,0) -- (1,-1);\draw[color=black, shorten >=0.3pt] (1,1) -- (1,0);

            \end{tikzpicture}
        }
        \caption{Manhattan lattice}   
        \label{fig:subfig9}
    \end{subfigure}
    \caption{Two different periodic orientations of the square grid with indegree and outdegree $2$.}
    \label{f.directed}
\end{figure}

\begin{thm}
\label{t.directed}
Uniform rotor walk is recurrent on both the $F$-lattice and the Manhattan lattice.
\end{thm}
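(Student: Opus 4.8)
The plan is to realize the uniform rotor walk as a light ray in a random \emph{mirror model} on $\Z^2$ and to deduce recurrence from the a.s.\ boundedness of mirror-model orbits at criticality. The starting point is that every vertex of the $F$-lattice and of the Manhattan lattice has indegree and outdegree $2$, so its rotor has only two states and each visit simply toggles the chosen out-edge. First I would record the elementary but crucial local fact: the first two visits to a vertex $v$ exit along the two \emph{distinct} out-edges, so these two passages realize a perfect matching between the two in-edges of $v$ and its two out-edges. On the $F$-lattice the two in-edges are collinear and the two out-edges are collinear and perpendicular to them, so both possible matchings are genuine $45^\circ$ mirrors (the NE--SW and NW--SE orientations); on the Manhattan lattice one matching sends the ray straight through and the other forces a turn, the turn direction being dictated by the orientation. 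In either case the out-edge taken on the \emph{first} visit is determined by $\rho(v)$ alone, and since $\rho(v)$ is uniform on $E_v$ and independent across vertices, revealing rotors as the walk discovers fresh vertices produces an i.i.d.\ family of scatterers of the two types, each with probability $1/2$.

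Next I would set up the coupling precisely. Run the rotor walk from $X_0$, and the first time it enters a fresh vertex $v$ declare the scatterer at $v$ to be the unique one consistent with the pair (first in-edge, first out-edge). The content of the coupling lemma is that the rotor-walk trajectory then agrees, edge for edge, with the light ray of this scatterer configuration; the only thing to verify is the \emph{second} visit to $v$, where the ray arrives along the other in-edge and the toggled rotor exits along exactly the out-edge prescribed by the already-fixed scatterer, keeping the two dynamics synchronized. Here I would invoke the excursion structure of Lemma~\ref{l.excursion2} to control the order in which the edges at $v$ are used and, in particular, to guarantee that up to the first return to $X_0$ no vertex is re-entered along an in-edge it has already used. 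That event is precisely what would desynchronize the coupling, since a mirror traverses each directed edge at most once.

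With the coupling in hand, recurrence reduces to showing that the light ray of this environment is confined to a bounded region a.s. This is where critical percolation enters: for the symmetric two-mirror model the closed orbits are exactly the interfaces of critical bond percolation on $\Z^2$ (equivalently, the loops of the associated medial loop model), and the Manhattan ``straight-or-turn'' scatterers admit an analogous critical-interface description. Since $p_c(\Z^2)=1/2$ and there is a.s.\ no infinite open or dual cluster at $p=1/2$ (Harris--Kesten), every interface is a finite loop; in fact, by circuit (RSW-type) estimates at criticality there is a.s.\ a finite mirror loop encircling $X_0$. A mirror loop is a closed curve that the light ray, and hence the coupled rotor walk, cannot cross, so the walk is trapped in a bounded region. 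Being a deterministic dynamics on finitely many (vertex, rotor) states, it is eventually periodic, and the trapping forces $X_0$ onto its recurrent cycle; thus $X_0$ is visited infinitely often, which is recurrence.

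The main obstacle is the coupling lemma of the second paragraph, not the percolation input, which is classical. The delicate points are (i) confirming that the rotor rule on these directed lattices really generates \emph{independent, uniform} scatterers under the reveal-as-you-go filtration, and (ii) ruling out, before the first return to $X_0$, any revisit of a vertex along a previously used in-edge—the feature that separates the reversible mirror dynamics from the a priori more permissive rotor dynamics—which is exactly where the Eulerian excursion machinery of Lemma~\ref{l.excursion2} must do the work. A secondary point is to pin down the precise critical percolation model matched to the Manhattan orientation, since its directed scatterers are not the classical reflecting mirrors; once its closed orbits are identified with critical interfaces, the same trapping argument yields the conclusion for both lattices.
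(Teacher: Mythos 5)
Your first two steps---realizing the two-state rotors as random scatterers, and checking that revealing rotors as the walk discovers fresh vertices produces i.i.d.\ fair mirrors---are exactly the paper's construction (percolation on the half dual lattice $\mathbb{L}$ together with the ``first glance mirror walk'' coupling lemma). The genuine error is in your third step. A closed mirror loop does \emph{not} trap the rotor walk forever, and your coupling cannot be maintained for all time: it holds only until the walk re-enters some vertex along an in-edge it has already used, at which moment the toggled rotor sends the walk along the out-edge \emph{not} prescribed by the mirror matching---and this is precisely how the walk crosses the loop. Such a desynchronization is not something Lemma~\ref{l.excursion2} can rule out; it is forced to occur. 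Indeed, if the walk were confined to a bounded region forever, some vertex would be visited infinitely often, the rotor mechanism would then push the walk out along \emph{every} out-edge of that vertex infinitely often, and by strong connectivity of these lattices the walk would reach vertices arbitrarily far away---a contradiction. (Equivalently: once recurrence is known, Corollary~\ref{c.balls} gives $B(o,n)\subseteq A_n$, so the range is unbounded.) So your conclusion ``the walk is trapped in a bounded region, hence eventually periodic, hence recurrent'' proves something false along the way, and even granting permanent trapping, the claim that $X_0$ must lie on the eventual periodic cycle was unjustified. Also note that the tool controlling edge reuse is Lemma~\ref{l.excursion1}, not Lemma~\ref{l.excursion2}: its proof shows no directed edge is traversed twice within a single excursion, and the \emph{first} reuse of any edge occurs exactly at the completion of the excursion, at the origin.

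The paper's Lemma~\ref{l.cycle} converts each closed circuit into \emph{returns} rather than into confinement: if a circuit of closed edges of $\mathbb{L}$ surrounds $o$, then to first visit a vertex $w$ outside it the walk must exit through some mirror vertex $v$ on the circuit, whose two in-edges lie on opposite sides of the mirror; before $w$ is visited all entries to $v$ come from inside, so the exit to $w$ requires entering $v$ twice along the \emph{same} in-edge $(u,v)$---an edge reuse---and by Lemma~\ref{l.excursion1} the first reuse forces a completed excursion at $o$, i.e.\ $\deg(o)$ returns, before the crossing. One then iterates: after each completed excursion, almost surely another disjoint closed circuit (supplied by the RSW/Harris--Kesten input you correctly identified) surrounds the finite region visited so far, and crossing it costs a further completed excursion. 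This yields infinitely many returns to $o$, which is the recurrence claimed; your percolation input was sound, but the bridge from circuits to recurrence must count excursions per crossing, not assert permanent confinement.
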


The proof uses a connection to the mirror model and critical bond percolation on $\Z^2$; see \textsection\ref{s.pinball}.

Theorems \ref{t.main}-\ref{t.directed} bound the rate at which rotor walk explores various infinite graphs. In \textsection\ref{s.cover} we bound the time it takes a rotor walk to completely explore a given finite graph.

\subsection*{Related work}

By comparing to a branching process, Angel and Holroyd \cite{angel-holroyd-trees} showed that uniform rotor walk on the infinite $b$-ary tree is transient for $b \geq 3$ and recurrent for $b=2$. In the latter case the corresponding branching process is critical, and the distance traveled by rotor walk before returning $n$ times to the root is doubly exponential in $n$. They also studied rotor walk on a singly infinite comb with the ``most transient'' initial rotor configuration $\rho$. They showed that if $n$ particles start at the origin then order $\sqrt{n}$ of them escape to infinity (more generally, order $n^{1-2^{1-d}}$ for a $d$-dimensional analogue of the comb). 

In \emph{rotor aggregation}, each of $n$ particles starting at the origin performs rotor walk until reaching an unoccupied site, which it then occupies. For rotor aggregation in $\Z^d$, the asymptotic shape of the set of occupied sites is a Euclidean ball \cite{levine-peres}.  For the layered square lattice ($\Z^2$ with an outward bias along the $x$- and $y$-axes) the asymptotic shape becomes a diamond \cite{KL}.  Huss and Sava \cite{HScomb} studied rotor aggregation on the $2$-dimensional comb with the ``most recurrent'' initial rotor configuration. They showed that at certain times the boundary of the set of occupied sites is composed of four segments of exact parabolas. It is interesting to compare their result with Theorem~\ref{t.combintro}: The asymptotic shape, and even the scaling required (elliptic for rotor walk, parabolic for rotor aggregation), is different.

\old{
\begin{table}[h]
\begin{center}
        \begin{tabular}{| l l l | l |  p{2cm} |}

    \hline
    Lattice & $ \# R_t$ (experimental) & $\# R_t$ (rigourous) \\ \hline
    $\mathbb{Z}^2$ (N-E-S-W) & $6.0000\, t^{2/3}$ & $\Omega(t^{2/3})$ \\ \hline
    $\mathbb{Z}^2$ (N-S-E-W) & $\Theta(t)$ &  $\Omega(t^{2/3})$ \\ \hline
    triangular lattice & $11.12\, t^{2/3}$ & $\Omega(t^{2/3})$ \\ \hline
        Manhattan lattice &  $16.5\, t^{2/3}$ & $\Omega(t^{2/3})$ \\ \hline
                        $F$-lattice &  $\Theta(t)$ & $\Omega(t^{2/3})$ \\ \hline
    \end{tabular}
    \centering
 \caption{Number of distinct sites visited by the uniform rotor walk in $t$ steps, on various two-dimensional lattices.}
\end{center}
    \end{table}
}

\section{Excursions}
\label{s.excursion}

Let $G=(V,E)$ be a connected Eulerian graph. In this section $G$ can be either finite or infinite, and the rotor mechanism $m$ can be arbitrary. The main idea of the proof of Theorem~\ref{t.main} is to decompose rotor walk on $G$ into a sequence of excursions.  This idea was also used in \cite{angel-holroyd-rec} to construct recurrent rotor configurations on $\Z^d$ for all $d$, and in \cite{lockin,mazes,patrol} to bound the cover time of rotor walk on a finite graph (about which we say more in \textsection\ref{s.cover}).

\begin{definition} 
 Fix a vertex $o \in V$.
An \emph{excursion} from $o$ is a rotor walk started at $o$ and run until it returns to $o$ exactly $\deg(o)$ times. 
\end{definition}

More formally, let $(X_t)_{t \geq 0}$ be a rotor walk started at $X_0 = o$. For $t\geq 0$ let
	\[ u_t(x) = \# \{0 \leq s<t \,:\, X_s = x\} \]
and let $u_\infty(x) \in \N \cup \{\infty\}$ be the increasing limit of $u_t(x)$. For $n \geq 0$ let
	\[ T(n) = \inf \{ t \geq 0 \,:\, u_t(o) \geq n \deg(o) \} \in \N \cup \{\infty\}\]
be the time taken for the rotor walk to complete $n$ excursions from $o$.  For all $n \geq 1$ such that $T(n-1)<\infty$, define
	\[ e_{n} = u_{T(n)} - u_{T(n-1)}, \qquad n \geq 1. \]
Our first lemma says that each $x \in V$ is visited at most $\deg(x)$ times per excursion. The assumption that $G$ is Eulerian is crucial here.

\begin{lemma} \cite[Lemma~8]{angel-holroyd-rec}; \cite[\textsection4.2]{mazes}
\label{l.excursion1}
For any initial rotor configuration $\rho$, 
	\[ e_1(x) \leq \deg(x)  \qquad \forall x \in V. \]
\end{lemma}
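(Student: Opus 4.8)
The plan is to reduce the statement to a bound on the number of \emph{exits} from each vertex and then establish that bound by a ``first overflow'' argument powered by the round-robin nature of the rotor. Let $N(x)$ denote the number of steps of the first excursion that leave $x$ (equivalently, the number of times the rotor at $x$ is advanced before time $T(1)$). Because the excursion is stopped at the instant $o$ is reached for the $\deg(o)$-th time, its positions are $X_0,\dots,X_{T(1)-1}$ with $X_{T(1)-1}=o$; hence every visit to a vertex $x\neq o$ is immediately followed by an outbound step, so $e_1(x)=N(x)$, while $e_1(o)=\deg(o)$ by the definition of an excursion. It therefore suffices to prove that no vertex $x$ is ever exited more than $\deg(x)$ times before time $T(1)$, since this yields $e_1(x)=N(x)\le \deg(x)$ for $x\neq o$.

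The engine of the proof is the following round-robin property. Since $m(x)$ is a single cyclic permutation of the $\deg(x)$ outbound edges in $E_x$, the edges used on the first, second, third, \dots\ exits from $x$ are $m(x)(\rho(x)),\,m(x)^2(\rho(x)),\,m(x)^3(\rho(x)),\dots$, so consecutive uses of any one fixed outbound edge are separated by exactly $\deg(x)$ exits. In particular, a fixed outbound edge of $x$ can be traversed twice only after $x$ has been exited at least $\deg(x)+1$ times. This is the only place the specific rotor dynamics enter, and it is where I must match the retrospective convention $X_{t+1}=\rho_{t+1}(X_t)^+$ so that the enumeration of used edges is the one above.

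Now I would argue by contradiction. Suppose some vertex is exited more than its degree before time $T(1)$, and let $\tau<T(1)$ be the first step at which some vertex $v$ is exited for the $(\deg(v)+1)$-st time. Since the excursion halts once $o$ has been visited $\deg(o)$ times, $o$ is exited at most $\deg(o)-1$ times before $T(1)$, so $v\neq o$. As $v\neq o=X_0$, each visit to $v$ is an entrance, so by time $\tau$ the walk has entered $v$ exactly $\deg(v)+1$ times. Here the Eulerian hypothesis is essential: $v$ has only $\indeg(v)=\deg(v)$ inbound edges, so by pigeonhole some inbound edge, say from a neighbor $w$, is traversed at least twice. Each such traversal is an exit from $w$ along the fixed edge $w\to v$, so by the round-robin property $w$ must already have been exited at least $\deg(w)+1$ times, and at a step strictly earlier than $\tau$. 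This contradicts the minimality of $\tau$, completing the argument.

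I expect the main obstacle to be bookkeeping rather than conceptual: pinning down the off-by-one counts at the two endpoints of the excursion (so that $e_1(x)=N(x)$ for $x\neq o$ and that the terminal visit to $o$ contributes no spurious exit), and verifying that the second traversal of $w\to v$ genuinely occurs at a step \emph{strictly before} $\tau$ rather than simultaneously with it, so that the clash with the minimality of $\tau$ is clean.
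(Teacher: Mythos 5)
Your proof is correct and is essentially the paper's argument rearranged as a minimal-counterexample: the paper's proof considers the first \emph{repeated directed edge} and uses the same two ingredients you do (the round-robin property, forcing $\deg+1$ exits before any outgoing edge is reused, and the Eulerian pigeonhole $\indeg(v)=\deg(v)$ on entrances) to conclude directly that the first repeat occurs at $o$ precisely when the excursion ends, whereas you take the first \emph{over-exited vertex} and descend to a neighbor for a contradiction. The one loose end is that your reduction paragraph presumes $T(1)<\infty$ (the paper's first case treats $T(1)=\infty$ separately), but your key exit bound never uses finiteness of $T(1)$, and when $T(1)=\infty$ every visit is followed by an exit, so $e_1(x)=u_\infty(x)\le N(x)\le\deg(x)$ — a one-line fix.
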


\begin{proof}
If the rotor walk never traverses the same directed edge twice, then $T(1)=\infty$ and $u_\infty \leq \deg$, so we are done. Otherwise, consider the smallest $t$ such that $(X_s,X_{s+1}) = (X_t,X_{t+1})$ for some $s<t$. Rotor walk reuses an outgoing edge from $X_t$ only after it has used all of the outgoing edges from $X_t$.  Therefore, at time $t$ the vertex $X_t$ has been visited $\deg(X_t)+1$ times, but each incoming edge to $X_t$ has been traversed at most once. Since $G$ is Eulerian it follows that $X_t=o$ and $t = T(1)$.
\end{proof}

\begin{lemma}
\label{l.excursion2}
If $T(1)<\infty$ and there is a directed path of \underline{initial} rotors from $x$ to $o$, then 
	\[ e_1(x) = \deg(x). \]
\end{lemma}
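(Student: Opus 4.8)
The inequality $e_1(x)\le\deg(x)$ is already supplied by Lemma~\ref{l.excursion1}, so the plan is to prove the matching lower bound $e_1(x)\ge\deg(x)$. The key observation concerns the role the initial rotor plays under the retrospective convention: since $m(v)$ is a cyclic permutation of the $\deg(v)$ outgoing edges in $E_v$, the successive exits from $v$ are $m(v)(\rho(v)),\,m(v)^2(\rho(v)),\dots$, so the $\deg(v)$-th exit from $v$ is along $m(v)^{\deg(v)}(\rho(v))=\rho(v)$ itself. In other words, the initial rotor edge $\rho(v)$ is the \emph{last} of the $\deg(v)$ edges used in one full turn of the rotor at $v$. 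During the first excursion no directed edge is traversed twice (this is exactly what the proof of Lemma~\ref{l.excursion1} establishes: the first repeated edge occurs only at the step beginning at time $T(1)$). Consequently the edge $\rho(v)$ is traversed at some point of the first excursion if and only if $v$ has been visited at least $\deg(v)$ times, i.e. iff $e_1(v)=\deg(v)$. The problem therefore reduces to showing that the initial-rotor edge $\rho(v)$ is traversed during the excursion for every vertex $v$ lying on the given rotor path.

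I would propagate this ``fully visited'' property backwards along the path, from $o$ to $x$, using flow conservation. Note first that the excursion is a closed walk from $o$ to $o$ (the proof of Lemma~\ref{l.excursion1} gives $X_{T(1)}=o$) in which, as above, every directed edge is used at most once; hence at every vertex the number of traversed incoming edges equals the number of traversed outgoing edges. At $o$ itself we have $e_1(o)=\deg(o)$ by definition of $T(1)$, so all $\deg(o)$ outgoing edges of $o$ are used, and by conservation all $\deg(o)=\indeg(o)$ incoming edges of $o$ are used as well. More generally, whenever a vertex $v$ satisfies $e_1(v)=\deg(v)$, all of its outgoing edges are used (there are $\deg(v)$ of them, each at most once), and by conservation all of its incoming edges are used too.

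Now write the directed path of initial rotors as $x=x_0\to x_1\to\cdots\to x_k=o$, where $x_{i+1}=\rho(x_i)^+$; we may assume it meets $o$ only at its last vertex. I claim by reverse induction that $e_1(x_i)=\deg(x_i)$ for every $i$. The base case $i=k$ is $e_1(o)=\deg(o)$. For the inductive step, suppose $e_1(x_i)=\deg(x_i)$. By the previous paragraph every incoming edge of $x_i$ is traversed during the excursion; in particular the edge $\rho(x_{i-1})$, which runs from $x_{i-1}$ to $x_i$, is traversed. But $\rho(x_{i-1})$ is precisely the initial rotor of $x_{i-1}$, so by the key observation $x_{i-1}$ must have been visited at least $\deg(x_{i-1})$ times, and Lemma~\ref{l.excursion1} forces $e_1(x_{i-1})=\deg(x_{i-1})$. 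Taking $i=0$ gives $e_1(x)=\deg(x)$, as desired.

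The conceptual heart of the argument---and the only place that really uses the hypothesis---is the recognition that, under the retrospective convention, the initial rotor $\rho(v)$ is the last edge of a completed rotor cycle, so that ``the initial-rotor edge of $v$ was used'' is equivalent to ``$v$ was visited the full $\deg(v)$ times.'' Once this is in place, everything else is bookkeeping: the no-repeated-edge fact from Lemma~\ref{l.excursion1} together with flow conservation of the closed excursion walk lets ``fully visited'' travel one step backwards along the rotor path at a time. The main point to be careful about is the endpoint accounting at $o$ (the walk both starts and ends there, so one must verify that the final return edge into $o$ is genuinely counted among the traversed in-edges), but this is routine given $X_{T(1)}=o$.
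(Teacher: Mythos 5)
Your proof is correct and follows essentially the same route as the paper's: induction along the rotor path, using that each directed edge is traversed at most once in the first excursion (Lemma~\ref{l.excursion1}), that a fully visited vertex has all of its incoming edges traversed, and that under the retrospective convention $\rho(v)$ is the last exit from $v$, so its traversal forces $\deg(v)$ visits. The only cosmetic difference is bookkeeping: the paper counts entries directly (each of the $\deg(y)$ visits to $y$ arrives along a distinct incoming edge, and $\indeg(y)=\deg(y)$), whereas you derive the same fact from flow conservation of the closed excursion walk.
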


\begin{proof}
Let $y$ be the first vertex on the path of initial rotors from $x$ to $o$. By induction on the length of this path, $y$ is visited exactly $\deg(y)$ times in an excursion from $o$. Each incoming edge to $y$ is traversed at most once by Lemma~\ref{l.excursion1}, so in fact each incoming edge to $y$ is traversed exactly once.  In particular, the edge $(x,y)$ is traversed. Since $\rho(x)=(x,y)$, the edge $(x,y)$ is the last one traversed out of $x$, so $x$ must be visited at least $\deg(x)$ times.
\end{proof}

If $G$ is finite, then $T(n)<\infty$ for all $n$ by Lemma~\ref{l.excursion1}. If $G$ is infinite, then depending on the rotor mechanism $m$ and initial rotor configuration $\rho$, rotor walk may or may not complete an excursion from $o$. In particular, Lemma~\ref{l.excursion2} implies the following.

\begin{corollary}
\label{c.incomplete}
If $\rho$ has an infinite path directed toward $o$, then $T(1)=\infty$.
\end{corollary}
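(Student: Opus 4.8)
The plan is to derive $T(1)=\infty$ by contradiction, using Lemma~\ref{l.excursion2} as the main tool. First I would write the infinite path of initial rotors directed toward $o$ as a sequence of distinct vertices $x_1, x_2, x_3, \ldots$, where $\rho(x_k)$ is the edge from $x_k$ to $x_{k-1}$ and $x_0 := o$. Then for every $k \geq 1$ the concatenation $x_k \to x_{k-1} \to \cdots \to x_1 \to o$ is a directed path of \emph{initial} rotors from $x_k$ to $o$, so each $x_k$ satisfies the hypothesis of Lemma~\ref{l.excursion2}.

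Now suppose toward a contradiction that $T(1) < \infty$. Applying Lemma~\ref{l.excursion2} to each $x_k$ gives $e_1(x_k) = \deg(x_k) \geq 1$ for all $k \geq 1$. On the other hand, I would record the elementary bookkeeping identity for the length of a completed excursion: since $T(0)=0$ we have $e_1(x) = u_{T(1)}(x)$ for every vertex $x$, and summing over $V$ counts each of the time steps $0 \leq s < T(1)$ exactly once, whence $\sum_{x \in V} e_1(x) = T(1)$. Combining the two facts yields $T(1) = \sum_{x \in V} e_1(x) \geq \sum_{k \geq 1} e_1(x_k) \geq \sum_{k \geq 1} 1 = \infty$, contradicting the assumption $T(1) < \infty$. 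Hence $T(1)=\infty$.

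I do not expect a genuine obstacle: the statement is close to a direct consequence of Lemma~\ref{l.excursion2}, the content being that a single excursion is forced to visit each of the infinitely many vertices $x_k$ and therefore cannot have finite length. The only point deserving a moment's care is the identity $\sum_{x} e_1(x) = T(1)$, which makes precise the intuition that a finite-length excursion visits only finitely many sites (counted with multiplicity); this is exactly what clashes with the infinitely many forced visits supplied by the rotor path.
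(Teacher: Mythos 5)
Your proof is correct and follows exactly the route the paper intends: the paper states Corollary~\ref{c.incomplete} as an immediate consequence of Lemma~\ref{l.excursion2}, and your argument simply makes that implication explicit (a finite excursion has total length $T(1)=\sum_x e_1(x)$, which cannot accommodate the infinitely many vertices forced by the lemma to satisfy $e_1(x_k)=\deg(x_k)\geq 1$). The bookkeeping identity you flag is verified correctly, so there is no gap.
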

	
Now let
	\[ A_n = \{ x \in V \,:\, e_n(x)>0 \} \]
be the set of sites visited during the $n$th excursion.   We also set $e_0 = \delta_o$ and $A_0 = \{o\}$. 
For a subset $A \subset V$, define
	\[ \partial A = \{y \in V \,:\, (x,y) \in E \; \text{for some } x \in A\}. \]
	
\begin{lemma}
\label{l.excursion3}
If $T(n+1)<\infty$, then
\begin{enumerate}[\em (i)]
\item $e_{n+1}(x) \leq \deg(x)$ for all $x \in V$.
\item $e_{n+1}(x) = \deg(x)$ for all $x \in A_n$.
\item $A_{n+1} \supseteq A_n \cup \partial A_n$.
\end{enumerate}
\end{lemma}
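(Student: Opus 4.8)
The plan is to treat the $(n+1)$st excursion as a brand-new excursion from $o$, whose initial rotor configuration is $\sigma := \rho_{T(n)}$, the configuration left behind when the $n$th excursion finishes. Under this identification $e_{n+1}$ is exactly the visit-count vector of a single excursion run from $\sigma$, and the hypothesis $T(n+1)<\infty$ says precisely that this fresh excursion completes. Part (i) is then immediate from Lemma~\ref{l.excursion1} applied to the fresh excursion. The whole content of the lemma is therefore to feed the hypothesis of Lemma~\ref{l.excursion2} for every $x\in A_n$: I need to exhibit, in the configuration $\sigma=\rho_{T(n)}$, a directed path of rotors from each $x \in A_n$ to $o$.

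I would isolate this as a separate \emph{drainage} claim: whenever a rotor walk started at $o$ completes an excursion at a finite time $\tau$, the configuration $\rho_\tau$ contains a directed rotor-path from every vertex visited during that excursion to $o$. Granting the claim and applying it to the $n$th excursion (at time $\tau=T(n)$, with visited set $A_n$; the case $n=0$ being the trivial path $o\to o$), Lemma~\ref{l.excursion2} yields $e_{n+1}(x)=\deg(x)$ for all $x\in A_n$, which is (ii). Part (iii) then follows by bookkeeping: (ii) gives $A_{n+1}\supseteq A_n$, and since $e_{n+1}(x)=\deg(x)$ forces the fresh excursion to advance the rotor at $x$ through a full turn, every outbound edge of $x$ is traversed; hence for $x\in A_n$ each out-neighbor of $x$ is entered, giving $A_{n+1}\supseteq A_n\cup\partial A_n$.

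The heart of the matter, and the step I expect to be the main obstacle, is the drainage claim. I would prove it by ruling out directed rotor-cycles that avoid $o$. Using the retrospective convention, for a visited vertex $x\neq o$ the rotor $\rho_\tau(x)$ records the last exit from $x$ during the excursion, and following these rotors stays inside the finite visited set; so if the rotor-path from some $x$ never reached $o$, it would have to enter a directed cycle $z_0\to z_1\to\cdots\to z_{k-1}\to z_0$ with $o\notin\{z_0,\dots,z_{k-1}\}$. Let $s_i$ be the time of the last exit from $z_i$ and pick $j$ maximizing $s_i$. At time $s_j$ the walk steps $z_j\to z_{j+1}$, and since distinct vertices are exited at distinct times we have $s_{j+1}<s_j$; thus the walk enters $z_{j+1}$ at time $s_j+1$, strictly after its last exit from $z_{j+1}$. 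As $z_{j+1}\neq o$ the walk cannot stop there, so it must exit $z_{j+1}$ again at some later time, contradicting the maximality of $s_j$ (the self-loop case $k=1$ is the same argument). Hence no such cycle exists; the deterministic rotor-path from any visited $x$ cannot cycle without meeting $o$, and being confined to a finite set it must reach $o$. The subtleties to check carefully are that the rotor-path indeed stays among vertices visited during the excursion in question—so that all the times $s_i$ lie in its time window—and that when we apply the claim to the $n$th excursion the relevant visited set is exactly $A_n$.
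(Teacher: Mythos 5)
Your proposal is correct and follows essentially the same route as the paper: part (i) from Lemma~\ref{l.excursion1} applied to the fresh excursion started from $\rho_{T(n)}$, part (ii) from Lemma~\ref{l.excursion2} via the observation that the retrospective rotors at vertices of $A_n$ form directed paths to $X_{T(n)}=o$, and part (iii) by the full-rotation bookkeeping. The only difference is that the paper states the rotor-path ("drainage") observation in one sentence without proof, whereas you supply the last-exit-time/cycle-exclusion verification explicitly --- a correct elaboration of the same idea, not a different approach.
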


\begin{proof}
Part (i) is immediate from Lemma~\ref{l.excursion1}.

Part (ii) follows from Lemma~\ref{l.excursion2} and the observation that in the rotor configuration $\rho_{T(n)}$, the rotor at each $x \in A_n$ points along the edge traversed most recently from $x$, so for each $x \in A_n$ there is a directed path of rotors in $\rho_{T(n)}$ leading to $X_{T(n)} = o$. 

Part (iii) follows from (ii): the $(n+1)$st excursion traverses each outgoing edge from each $x\in A_n$, so in particular it visits each vertex in $A_n \cup \partial A_n$.
%
\end{proof}

For $x \in V$ and $r \in \N$ denote by $B(x,r)$ the set of vertices reachable from $x$ by a directed path of length $\leq r$. 
Inducting on $n$ using Lemma~\ref{l.excursion3}(ii), we obtain the following.

\begin{corollary}
\label{c.balls}
If $T(n)<\infty$, then $B(o,n) \subseteq A_n$.
\end{corollary}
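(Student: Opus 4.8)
The plan is to prove the inclusion by induction on $n$, with the engine being Lemma~\ref{l.excursion3}(iii) (a consequence of part (ii)), which guarantees $A_{n+1}\supseteq A_n\cup\partial A_n$ whenever $T(n+1)<\infty$. The geometric input I would pair with it is the recursive structure of directed balls, namely $B(o,n+1)=B(o,n)\cup\partial B(o,n)$, which propagates the one-layer growth of $A_n$ into the ball inclusion.

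For the base case $n=0$, I note that a directed path of length $\le 0$ is just the vertex itself, so $B(o,0)=\{o\}=A_0$ and the inclusion is immediate.

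For the inductive step I would assume $B(o,n)\subseteq A_n$ and $T(n+1)<\infty$. Since the sequence $T$ is nondecreasing, $T(n)\le T(n+1)<\infty$, so both the induction hypothesis and Lemma~\ref{l.excursion3}(iii) are available. First I would justify the ball recursion: a vertex $z\in B(o,n+1)$ is reached from $o$ by a directed path of length $\le n+1$; if that length is $\le n$ then $z\in B(o,n)$, and if it is exactly $n+1$ then the penultimate vertex lies in $B(o,n)$ and emits a directed edge to $z$, so $z\in\partial B(o,n)$. Then, using monotonicity of $\partial$ under inclusion (so that $B(o,n)\subseteq A_n$ forces $\partial B(o,n)\subseteq\partial A_n$), I would chain
\[ B(o,n+1)=B(o,n)\cup\partial B(o,n)\subseteq A_n\cup\partial A_n\subseteq A_{n+1}, \]
where the last inclusion is Lemma~\ref{l.excursion3}(iii). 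This closes the induction.

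I expect no serious obstacle here; the argument is essentially bookkeeping built on top of Lemma~\ref{l.excursion3}. The two points I would be careful about are the directed (as opposed to undirected) nature of the paths defining both $B(o,\cdot)$ and $\partial$, and the implication $T(n+1)<\infty\Rightarrow T(n)<\infty$, which is exactly what lets me invoke the induction hypothesis and Lemma~\ref{l.excursion3}(iii) simultaneously in the inductive step.
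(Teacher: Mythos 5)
Your proof is correct and is essentially the paper's own argument: the paper dispatches this corollary with the single line ``Inducting on $n$ using Lemma~\ref{l.excursion3}(ii)'', and your induction via the consequence $A_{n+1}\supseteq A_n\cup\partial A_n$ of that lemma is the same route. The details you supply---the base case $B(o,0)=\{o\}=A_0$, the recursion $B(o,n+1)=B(o,n)\cup\partial B(o,n)$ for directed balls, monotonicity of $\partial$, and the observation that $T(n+1)<\infty$ implies $T(n)<\infty$---are precisely the bookkeeping the paper leaves implicit.
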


Rotor walk is called \emph{recurrent} if $T(n)<\infty$ for all $n$.  
Consider the rotor configuration $\rho_{T(n)}$ at the end of the $n$th excursion. By Lemma~\ref{l.excursion3}, each vertex in $x \in A_n$ is visited exactly $\deg(x)$ times during the $N$th excursion for each $N \geq n+1$, so we obtain the following.

\begin{corollary}
For a recurrent rotor walk, 
$\rho_{T(N)}(x) = \rho_{T(n)}(x)$ for all $x \in A_n$ and all $N \geq n$.
\end{corollary}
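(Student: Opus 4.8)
The plan is to reduce the statement to one elementary fact: a rotor that is advanced exactly $\deg(x)$ times returns to its original orientation, because the mechanism $m(x)$ is a cyclic permutation of the $\deg(x)$ outbound edges $E_x$ and hence has order $\deg(x)$, so $m(x)^{\deg(x)}$ is the identity. Everything else has already been supplied by Lemma~\ref{l.excursion3}.

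First I would record the monotonicity of the excursion sets. Since the rotor walk is recurrent, $T(k)<\infty$ for every $k$, so Lemma~\ref{l.excursion3}(iii) applies at each stage and gives $A_k \subseteq A_{k+1}$. Iterating, $A_n \subseteq A_N$ for all $N \ge n$; in particular every $x \in A_n$ also lies in $A_N$. Next I would make the rotor bookkeeping precise: each time the walker stands at $x$ (a time $t$ with $X_t = x$) the rotor there advances by exactly one application of $m(x)$, namely $\rho_{t+1}(x) = m(x)(\rho_t(x))$. Consequently, during the $(N+1)$-st excursion the rotor at $x$ is advanced exactly $e_{N+1}(x)$ times.

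Now fix $x \in A_n$ and $N \ge n$. By the monotonicity above, $x \in A_N$, so Lemma~\ref{l.excursion3}(ii) gives $e_{N+1}(x) = \deg(x)$. Thus the rotor at $x$ is advanced $\deg(x)$ times over the $(N+1)$-st excursion, completing a full cycle, and therefore $\rho_{T(N+1)}(x) = \rho_{T(N)}(x)$. Telescoping this equality from $N$ down to $n$ (equivalently, a short induction on $N$ with base case $N=n$ trivial) yields $\rho_{T(N)}(x) = \rho_{T(n)}(x)$, which is exactly the claim.

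There is essentially no obstacle here, since the substantive work lives in the earlier lemmas; the corollary is just the statement that a cyclic permutation of a $\deg(x)$-element set has order $\deg(x)$. The only point needing attention is that one must invoke the monotonicity $A_n \subseteq A_N$ so that Lemma~\ref{l.excursion3}(ii) can be applied at the $(N+1)$-st excursion (giving $e_{N+1}(x)=\deg(x)$), rather than only at the $(n+1)$-st; this is precisely what upgrades ``$x$ is visited $\deg(x)$ times during excursion $n+1$'' to ``during every later excursion,'' and hence freezes the rotor at $x$ for all $N \ge n$.
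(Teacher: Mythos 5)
Your proof is correct and is essentially the paper's own argument: the paper derives the corollary in one sentence from Lemma~\ref{l.excursion3}, noting that each $x \in A_n$ is visited exactly $\deg(x)$ times in every excursion $N \geq n+1$, so the rotor at $x$ makes a full turn of the cyclic permutation $m(x)$ and returns to its position. Your write-up merely makes explicit the monotonicity $A_n \subseteq A_N$ and the fact that $m(x)^{\deg(x)}$ is the identity, both of which the paper leaves implicit.
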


The following proposition is a kind of converse to Lemma~\ref{l.excursion3} in the case of undirected graphs.

\begin{proposition}
\label{t.AH}
\cite[Lemma 3]{lockin}; \cite[Prop.\ 11]{angel-holroyd-rec}
Let $G=(V,E)$ be an undirected graph. For sequence of connected sets $S_1, S_2, \ldots \subset V$ such that $S_{n+1} \supseteq S_{n} \cup \partial S_n$ for all $n \geq 1$, and any vertex $o \in S_1$, there exists a rotor mechanism $m$ and initial rotors $\rho$ such that 
the $n$th excursion for rotor walk started at $o$ traverses each edge incident to $S_n$ exactly once in each direction, and no other edges. \end{proposition}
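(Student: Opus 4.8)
The plan is to read the desired excursion structure off the geometry of the sets $S_n$, encode it into a mechanism $m$ and initial rotors $\rho$, and then recognize each excursion as an Euler tour of a finite Eulerian graph. Define $\tau(x)=\min\{n\ge 1: x\in S_n\}\in\{1,2,\dots\}\cup\{\infty\}$. The hypothesis $S_{n+1}\supseteq S_n\cup\partial S_n$ is precisely the statement that $\tau$ is $1$-Lipschitz along edges: if $x\sim y$ and $\tau(x)=n<\infty$ then $y\in\partial S_n\subseteq S_{n+1}$, so $|\tau(x)-\tau(y)|\le 1$. Consequently $U:=\{\tau<\infty\}=\bigcup_n S_n$ is a union of connected components of $G$, a rotor walk from $o\in S_1$ never leaves $U$, and I may assume $G=U$ (setting rotors off $U$ arbitrarily) with each $S_n$ a finite connected set, so that the excursions terminate. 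For each $n$ let $H_n$ be the finite digraph whose arcs are those with at least one endpoint in $S_n$; since $G$ is undirected (hence Eulerian) and $S_n$ is connected, each $H_n$ is a finite connected Eulerian digraph with $o\in V(H_n)$, $\deg_{H_n}(o)=\deg_G(o)$, and $H_1\subseteq H_2\subseteq\cdots$. The claim ``excursion $n$ traverses each edge incident to $S_n$ once in each direction and no others'' is exactly that the $n$th excursion is an Euler tour of $H_n$; note the length of an excursion (run until $o$ is exited $\deg_G(o)$ times) matches the number of returns to $o$ in an Euler tour of $H_n$ precisely because $\deg_{H_n}(o)=\deg_G(o)$.

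Next I fix a spanning tree $\mathcal{T}$ of $G$ rooted at $o$ whose restriction to each $S_n$ is a spanning tree of $G[S_n]$; this exists by building $\mathcal{T}$ layer by layer, adjoining at stage $n$ a forest that attaches the new vertices $S_n\setminus S_{n-1}$ to $S_{n-1}$ inside the connected graph $G[S_n]$. Write $p(x)$ for the $\mathcal{T}$-parent of $x\neq o$, so $\tau(p(x))\le\tau(x)$ and following parents reaches $o$ within $S_{\tau(x)}$. I then define the cyclic order $m(x)$ at each vertex by listing the neighbours in blocks of nondecreasing activation time: first the arcs to neighbours with $\tau(y)<\tau(x)$, then those with $\tau(y)=\tau(x)$, then those with $\tau(y)=\tau(x)+1$ (no other values occur, by the Lipschitz property), arranging the parent arc $(x,p(x))$ to sit at the end of the first nonempty block. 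The initial rotor $\rho(x)$ is chosen so that, the first time $x$ is visited, its rotor sits on the parent arc within the then-active subgraph (for a local source of $\tau$ one simply takes $\rho(x)=(x,p(x))$; otherwise one places $\rho(x)$ on the last inactive arc preceding the first active block). The purpose of grouping arcs by activation time is that restricting $m(x)$ to the arcs of any $H_n$ deletes a \emph{consecutive} block of still-inactive arcs, leaving a well-defined cyclic order on the active ones; the purpose of parking the parent at a block boundary is that once $x$ has been exited along exactly the arcs active at its own level, its rotor comes to rest pointing at $p(x)$.

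The engine of the proof is the classical fact, contained in the cited \cite{lockin,angel-holroyd-rec}, that on a finite connected Eulerian digraph a rotor walk started at $o$ whose rotors form an oriented spanning tree directed toward $o$ performs, in one excursion, a full Euler tour: it traverses every arc exactly once, visits each $v$ exactly $\deg(v)$ times, and restores the spanning tree. I apply this to $H_n$ with the in-tree $\mathcal{T}\cap H_n$ (its arcs point toward $o$ and $\mathcal{T}$ is acyclic). The induction on $n$ carries the invariant that at time $T(n-1)$ the rotor at every already-visited vertex points at its parent, so that the configuration restricted to $H_n$ is exactly $\mathcal{T}\cap H_n$. Granting the invariant, the Euler-tour lemma shows excursion $n$ is an Euler tour of $H_n$, i.e.\ it traverses each edge incident to $S_n$ once in each direction and no others; and by the construction of $m,\rho$ it leaves every rotor on its parent arc, since a newly visited vertex is advanced from its inactive initial arc onto the parent arc exactly when the tour exhausts the arcs active at its level. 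This re-establishes the invariant for $n+1$, completing the induction.

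The main obstacle is justifying that the rotor walk on the \emph{infinite} graph $G$ actually agrees, throughout excursion $n$, with the finite $H_n$-rotor walk to which the Euler-tour lemma applies: one must show the walk never leaves $S_n\cup\partial S_n$ and never exits a vertex along an inactive (up) arc, so that each vertex is in fact visited exactly its $H_n$-degree many times. This is where the two design choices are used in tandem—the block ordering guarantees that the full mechanism, restricted to the currently active arcs, coincides with the $H_n$-mechanism, and the rest-on-parent property supplies the in-tree hypothesis—and the cleanest way to close it is a step-by-step coupling: as long as the two walks have identical histories their next steps agree, and a first divergence would force some vertex to be exited along an arc not yet active, which the block ordering forbids. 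Carrying out this coupling together with a careful statement and proof of the Euler-tour lemma, and the boundary bookkeeping forced by the $1$-Lipschitz behaviour of $\tau$ (every neighbour activating within one level of $x$), is the technical heart; the remainder is the combinatorial setup above.
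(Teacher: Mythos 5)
The paper contains no internal proof of Proposition~\ref{t.AH}; it is quoted outright from \cite[Lemma 3]{lockin} and \cite[Prop.\ 11]{angel-holroyd-rec}, so there is nothing in the text to compare line by line. Your proposal is, in substance, a faithful reconstruction of the standard argument behind those references: introduce the activation time $\tau(x)=\min\{n : x\in S_n\}$ (which is $1$-Lipschitz precisely because $S_{n+1}\supseteq S_n\cup\partial S_n$), form the finite Eulerian digraphs $H_n$ of arcs incident to $S_n$, order each rotor cyclically in blocks of nondecreasing activation time with the rotor parked just before its first active block, and prove by induction that each excursion is an Euler tour of $H_n$ --- the engine being the spanning-in-tree/Euler-tour lemma, which in this paper's own machinery is exactly Lemmas~\ref{l.excursion1} and~\ref{l.excursion2} rebased at time $T(n-1)$. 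The block-ordering and rest-position ideas are the right ones, and your identification of a boundary vertex's rotor ``one arc before its active block'' in $m(x)$ with ``the last active arc'' in the mechanism restricted to $H_n$ is what makes the coupling with the finite walk routine.

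One concrete inaccuracy needs repair. Your prescription ``parent arc at the end of the first nonempty block'' is inconsistent when $x$ has a neighbor with $\tau(y)<\tau(x)$ but its tree-parent satisfies $\tau(p(x))=\tau(x)$: the parent arc then does not lie in the first nonempty block at all, and after $x$'s boundary phase its rotor rests on the last \emph{down}-arc, not on the parent arc, so the stated invariant ``at time $T(n-1)$ every visited rotor points at its parent'' is false as written. Two standard fixes, either of which closes your induction: (i) choose the tree so that $p(x)$ has $\tau(p(x))<\tau(x)$ whenever $x$ has such a neighbor --- possible because every connected cluster of $S_n\setminus S_{n-1}$ meets the neighborhood of $S_{n-1}$ (connectivity of the induced graph on $S_n$), so each cluster can be oriented toward a vertex with a down-neighbor, while genuine local sources (no down-neighbor) are never boundary vertices of any $H_m$ and can simply start on their parent arc; or (ii) keep your tree and weaken the invariant to ``at each time $T(n-1)$ every vertex of $H_n$ has a directed path of current rotors to $o$,'' which holds because each rotor step strictly decreases the pair $(\tau(x), \mathrm{depth}_{\mathcal{T}}(x))$ lexicographically, and which is all that the Lemma~\ref{l.excursion2} argument needs. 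With either patch your outline is correct and matches the cited proofs in approach; the deferred coupling step you call the technical heart is indeed where the ``no other edges'' clause is earned, but it is routine once the active arcs are consecutive in every $m(x)$.
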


\section{Lower bound on the range}
\label{s.lowerbound}

In this section $G=(V,E)$ is an infinite connected Eulerian graph.
For $x \in V$ and $r \in \N$ denote by $B(x,r)$ the set of vertices reachable from $x$ by a directed path of length $\leq r$. Fix an origin $o \in V$ and let 
$v(r)$ be the number of directed edges incident to $B(o,r)$. 
Let $W(r) = \sum_{n=0}^{r-1} v(n)$.  Write $W^{-1}(t) = \min \{r\in \N \,:\, W(r) > t\}$. 

Fix a rotor mechanism $m$ and an initial rotor configuration $\rho$ on $G$.
For $x \in V$ let $u_t(x)$ be the number of times $x$ is visited by a rotor walk started at $o$ and run for $t$ steps.
The \emph{range} of rotor walk is the set $R_t = \{x \in V \,:\, u_t(x)>0 \}$.

\begin{thm}
\label{t.lowerbound}
For any rotor mechanism $m$, any initial rotor configuration $\rho$ on $G$, and any time $t \geq 0$, the following bounds hold.
\begin{enumerate}[\em (i)]
\item $u_t(o) < \deg(o) W^{-1}(t)$.
\item $u_t(x) \leq u_t(o) + \deg(x)$ for all $x \in V$.
\item Let $\Delta_t = \max_{x \in B(o,t)} \deg(x)$. Then
	\begin{equation} \label{e.rangelowerbound} \# R_t \geq \frac{t}{\deg(o) W^{-1}(t)+\Delta_t - 1} \end{equation}
\end{enumerate}
\end{thm}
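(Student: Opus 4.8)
The plan is to analyze the rotor walk entirely through its decomposition into excursions from $o$ developed in \textsection\ref{s.excursion}, and to extract all three bounds from two elementary bookkeeping facts about a single excursion. Write $n=n(t)$ for the number of excursions completed by time $t$, so that $T(n)\le t< T(n+1)$. The two facts I would rely on are: \emph{(a)} each complete excursion increases the visit count at $o$ by exactly $\deg(o)$ (immediate from the definition of $T(\cdot)$, since $e_k(o)=\deg(o)$ for every $k$) while it increases the count at any other $x$ by at most $\deg(x)$ by Lemma~\ref{l.excursion3}(i); and \emph{(b)} the $k$th excursion consists of at least $v(k-1)$ steps. Fact \emph{(b)} is the crux, so I address it first.

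To prove fact \emph{(b)} I would invoke Corollary~\ref{c.balls}: when $T(n)<\infty$ we have $B(o,k-1)\subseteq A_{k-1}$, and by Lemma~\ref{l.excursion3}(ii) every vertex of $A_{k-1}$ is visited exactly $\deg(x)$ times during the $k$th excursion. Here the Eulerian hypothesis does the work: visiting a vertex $y$ its full $\deg(y)$ times forces the $k$th excursion to traverse every outgoing edge of $y$, and, by the in/out balance used in the proof of Lemma~\ref{l.excursion1}, every incoming edge of $y$ as well. Hence every directed edge incident to $B(o,k-1)$ is traversed at least once in the $k$th excursion, so that excursion has at least $v(k-1)$ steps. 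I expect this edge-counting step to be the main obstacle, since it is exactly what converts the combinatorial excursion estimates into the volume-growth function $W$.

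Part (i) then follows by summation. The steps taken in the first $n$ excursions number $T(n)=\sum_{k=1}^{n}(\text{length of }k\text{th excursion})\ge\sum_{k=1}^{n}v(k-1)=W(n)$, and since $T(n)\le t$ we get $W(n)\le t$; by the definition of $W^{-1}$ and monotonicity of $W$ this forces $n\le W^{-1}(t)-1$. During the incomplete $(n+1)$st excursion the walk returns to $o$ fewer than $\deg(o)$ times (otherwise it would complete, contradicting $t<T(n+1)$), so $u_t(o)<(n+1)\deg(o)\le\deg(o)\,W^{-1}(t)$, which is (i). For part (ii) I would use fact \emph{(a)}: summing the per-excursion increments over the $n$ completed excursions gives $u_{T(n)}(x)\le u_{T(n)}(o)=n\deg(o)\le u_t(o)$, and the partial $(n+1)$st excursion adds at most $\deg(x)$ further visits to $x$. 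The latter is justified by applying Lemma~\ref{l.excursion1} to the rotor walk restarted at time $T(n)$ from the configuration $\rho_{T(n)}$: the segment $[T(n),t)$ is a prefix of the first excursion of that restarted walk, whose visits to $x$ number at most $\deg(x)$. Combining, $u_t(x)\le u_t(o)+\deg(x)$.

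Finally, part (iii) is a one-line consequence of (i) and (ii). I would start from the identity $t=\sum_{x\in V}u_t(x)=\sum_{x\in R_t}u_t(x)$, and note that every visited site is reachable from $o$ in at most $t$ steps, hence lies in $B(o,t)$ and has degree at most $\Delta_t$. Then for each $x\in R_t$, parts (ii) and (i) give the integer bound $u_t(x)\le u_t(o)+\deg(x)<\deg(o)\,W^{-1}(t)+\Delta_t$, i.e. $u_t(x)\le\deg(o)\,W^{-1}(t)+\Delta_t-1$. Summing over the $\#R_t$ visited sites yields $t\le \#R_t\,\bigl(\deg(o)\,W^{-1}(t)+\Delta_t-1\bigr)$, which rearranges to \eqref{e.rangelowerbound}.
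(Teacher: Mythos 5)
Your treatment of parts (i) and (iii), and your key fact \emph{(b)}, follow the paper's proof essentially verbatim: the paper likewise combines Lemma~\ref{l.excursion3} with Corollary~\ref{c.balls} to conclude that the $k$th excursion traverses every directed edge incident to $B(o,k-1)$ (your in/out-balance argument for the incoming edges is the intended one), sums these lengths to get $T(r)\geq W(r)$, deduces (i) by the definition of $W^{-1}$, and proves (iii) by the same counting of $t=\sum_{x}u_t(x)$ over $B(o,t)$. For those parts you are correct and on the paper's route.

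The gap is in part (ii). Summing the per-excursion increments $e_k(x)\leq \deg(x)$ over the $n$ completed excursions yields $u_{T(n)}(x)\leq n\deg(x)$ --- \emph{not} $u_{T(n)}(x)\leq u_{T(n)}(o)=n\deg(o)$ as you assert. That intermediate inequality is a non sequitur, and it actually fails whenever $\deg(x)>\deg(o)$ and the walk is recurrent: by Lemma~\ref{l.excursion3}(ii), once $x\in A_{k-1}$ every subsequent excursion visits $x$ \emph{exactly} $\deg(x)$ times, so $u_{T(n)}(x)=n\deg(x)+O(1)$ while $u_{T(n)}(o)=n\deg(o)$, and the difference grows linearly in $n$ (consider, say, $o$ a degree-$2$ vertex on a tooth of the comb and $x$ a degree-$4$ vertex on the axis, with a recurrent $m,\rho$, which Theorem~\ref{t.AH} supplies; the theorem quantifies over all $m,\rho$). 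Your bookkeeping is valid exactly when $\deg(x)\leq\deg(o)$, e.g.\ on regular graphs such as $\Z^d$. What the excursion decomposition does give in general --- and this part of your argument, restarting the walk at time $T(n)$ from $\rho_{T(n)}$ and applying Lemma~\ref{l.excursion1} to the partial excursion, is correct and is what the paper's one-line justification of (ii) rests on --- is the degree-weighted bound $u_t(x)\leq (n+1)\deg(x)\leq \deg(x)\bigl(u_t(o)/\deg(o)+1\bigr)$, which coincides with (ii) in the constant-degree case but is weaker otherwise. Note that this weaker bound still delivers part (iii) up to constants, with denominator of the form $\Delta_t\bigl(W^{-1}(t)+1\bigr)$, so the application to Theorem~\ref{t.main} is unharmed; but your proof as written does not establish (ii) in its literal form $u_t(x)\leq u_t(o)+\deg(x)$ for nonregular Eulerian $G$, and no amount of summing per-excursion increments will, since those increments are governed by $\deg(x)$ rather than $\deg(o)$.
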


Before proving this theorem we discuss a few examples. If $G= \Z^2$ then $B(o,r)$ is a diamond 
and $W(r) \sim \frac83 r^3$, which gives $\# R_t \geq c t^{2/3}$ with $c = (8/3)^{1/3} \approx 1.387$.  
\old{
Experiments show $\#R_t$ is actually about $6t^{2/3}$ for clockwise $m$ and i.i.d.\ uniform $\rho$. On the other hand, there exist $m$ and $\rho$ such that $\# R_t \sim (8/3)^{1/3} t^{2/3}$; see the remark following the proof.
}
More generally, if $G$ is any graph with a periodic embedding in $\R^d$, then $W(r) = \Omega( r^{d+1} )$, so by part (iii) the range of any rotor walk on $G$ is at least $\# R_t = \Omega( t^{d/(d+1)} )$, which shows that Theorem~\ref{t.lowerbound} implies Theorem~\ref{t.main}.
If $G$ has exponential volume growth we get $\# R_t = \Omega( t/\log t)$.

\begin{proof}[Proof of Theorem~\ref{t.lowerbound}]

By Lemma~\ref{l.excursion3} and Corollary~\ref{c.balls}, the $n$th excursion from $o$ traverses each directed edge incident to $B(o,n-1)$, so the total length of the first $r$ excursions is at least $W(r)$.  Therefore if $t < W(r)$ then the rotor walk has not yet completed its $r$th excursion at time $t$, so $u_t(o) < r \deg(o)$. Taking $r=W^{-1}(t)$ yields part (i).

Part (ii) is immediate from Lemma~\ref{l.excursion1}.

Part (iii) follows from the fact that $t = \sum_{x \in B(o,t)} u_t(x)$: By parts (i) and (ii), each term is at most $\deg(o) W^{-1}(t) - 1 +\Delta_t$, so there are at least $t/(\deg(o) W^{-1}(t)-1+\Delta_t)$ nonzero terms.
\end{proof}

\begin{remark}
Theorem~\ref{t.AH} shows that if $G$ is undirected, then \eqref{e.rangelowerbound} is the best possible lower bound on $\# R_t$ that does not depend on $m$ or $\rho$.  For example, taking $S_n = B(o,n)$ in $\Z^2$ yields a rotor walk with $\# R_t \sim (8/3)^{1/3} t^{2/3}$; the rotor mechanism is clockwise and the initial rotors are shown in Figure~\ref{f.diamondZ2}.
More generally, by taking $S_n$ to be a suitably growing sequence of sets, one can obtain any growth rate for $\# R_t$ intermediate between $t/W^{-1}(t)$ and~$t$. 
\end{remark}

\begin{center}

\begin{figure}[h!]
\centering
\begin{tikzpicture}
    [scale=1,only marks,reddot/.style={fill=red,circle,inner sep=0pt, minimum width=2pt},bluedot/.style={fill=blue,circle, inner sep=0pt, minimum width=2pt},
     spy using outlines={rectangle,lens={scale=3}, size=6cm, connect spies},
]

\draw[color=black] (-0.5,2) -- (0.5,2);
\draw[color=black] (-0.5,2) -- (-0.5,1);
\draw[color=black] (-0.5,1) -- (-1.5,1);
\draw[color=black] (-1.5,1) -- (-1.5,0);
\draw[color=black] (-1.5,0) -- (-2.5,0);
\draw[color=black] (-2.5,0) -- (-2.5,-1);
\draw[color=black] (-2.5,-1) -- (-1.5,-1);
\draw[color=black] (-1.5,-1) -- (-1.5,-2);
\draw[color=black] (-1.5,-2) -- (-0.5,-2);
\draw[color=black] (-0.5,-2) -- (-0.5,-3);
\draw[color=black] (-0.5,-3) -- (0.5,-3);
\draw[color=black] (0.5,-3) -- (0.5,-2);
\draw[color=black] (0.5,-2) -- (1.5,-2);
\draw[color=black] (1.5,-2) -- (1.5,-1);
\draw[color=black] (1.5,-1) -- (2.5,-1);
\draw[color=black] (2.5,-1) -- (2.5,0);
\draw[color=black] (2.5,0) -- (1.5,0);
\draw[color=black] (1.5,0) -- (1.5,1);
\draw[color = black] (1.5,1) -- (0.5,1);
\draw[color=black] (0.5,1) -- (0.5,2);

\draw[color=black][->] (-0.1,2.5) -- (0.4,2.5);
\draw[color=black][->] (-0.1,1.5) -- (0.4,1.5);
\draw[color=black][->] (1,1.5) -- (1.4,1.5);
\draw[color=black][->] (-0.1,0.5) -- (0.4,0.5);
\draw[color=black][->] (1,0.5) -- (1.4,0.5);

\draw[color=black][->] (1.2,-0.4) -- (1.2,-0.8);
\draw[color=black][->] (1.2,-1.3) -- (1.2,-1.7);
\draw[color=black][->] (1.2,-2.2) -- (1.2,-2.6);

\draw[color=black][->] (-1,-2.5) -- (-1.4,-2.5);
\draw[color=black][->] (-1,-1.5) -- (-1.4,-1.5);
\draw[color=black][->] (-1,-0.7) -- (-1,-0.3);
\draw[color=black][->] (-1,0.3) -- (-1,0.7);
\draw[color=black][->] (-1,1.3) -- (-1,1.7);

\draw[color=black][->] (-2,0.3) -- (-2,0.7);
\draw[color=black][->] (-2,-0.7) -- (-2,-0.3);
\draw[color=black][->] (-2,-1.5) -- (-2.4,-1.5);

\draw[color=black][->] (2,0.3) -- (2.4,0.3);
\draw[color=black][->] (2,-0.3) -- (2,-0.7);
\draw[color=black][->] (2,-1.3) -- (2,-1.7);

\draw[color=black][->] (3,-0.3) -- (3,-0.7);
\draw[color=black][->] (-3,-0.7) -- (-3,-0.3);

\draw[color=black][->] (0.4,-1.5) -- (-0.1,-1.5);
\draw[color=black][->] (0.4,-2.5) -- (-0.1,-2.5);
\draw[color=black][->] (0.4,-3.5) -- (-0.1,-3.5);

\end{tikzpicture}
\caption{\label{f.diamondZ2} Minimal range rotor configuration for $\mathbb{Z}^2$: the excursion sets are diamonds.}
\end{figure}
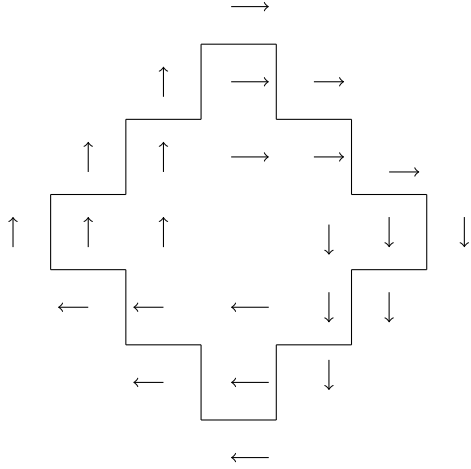

\end{center}

Part (i) of the next theorem gives a sufficient condition for rotor walk to be transient. Part (iii) shows that on a graph of bounded degree, the number of visited sites $\#R_t$ of a transient rotor walk grows linearly in $t$.

\begin{thm}
On any Eulerian graph, the following hold.
\begin{enumerate}[\em (i)]
\item If $\rho$ has an infinite path of initial rotors directed toward the origin $o$, then $u_t(o) < \deg(o)$ for all $t \geq 1$.
\item If $u_t(o) < \deg(o)$, then $\# R_t \geq t/\Delta_t$ where $\Delta_t = \max_{x \in B(o,t)} \deg(x)$.
\item If rotor walk is transient, then there is a constant $C=C(m,\rho)$ such that
	\[ \# R_t \geq \frac{t}{\Delta_t} - C \]
for all $t \geq 1$.
\end{enumerate}
\end{thm}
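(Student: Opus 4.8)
Parts (i) and (ii) I expect to be short. For (i), an infinite path of initial rotors directed toward $o$ is exactly the hypothesis of Corollary~\ref{c.incomplete}, so $T(1)=\infty$; since $T(1)=\inf\{t: u_t(o)\ge\deg(o)\}$, this says precisely that $u_t(o)<\deg(o)$ for every finite $t$. For (ii), the assumption $u_t(o)<\deg(o)$ means the walk has not completed its first excursion by time $t$, i.e.\ $t<T(1)$, so Lemma~\ref{l.excursion1} applied to the (possibly incomplete) first excursion bounds $u_t(x)\le\deg(x)$ for all $x$. Since the walk is a directed path out of $o$ we have $R_t\subseteq B(o,t)$, and hence $u_t(x)\le\deg(x)\le\Delta_t$ on $R_t$; summing the identity $t=\sum_{x\in R_t}u_t(x)$ then gives $t\le\Delta_t\,\#R_t$, as desired.

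The substance is in (iii), where the plan is to show that transience confines all ``excess'' visits to a finite region. First I would record that transience means $N:=u_\infty(o)<\infty$, so the walk completes only $n^\ast=\floor{N/\deg(o)}$ excursions and the set $A_{n^\ast}$ of sites visited during them is finite, being visited by the finite time $T(n^\ast)$. The key claim is that $D:=\{x: u_\infty(x)>\deg(x)\}\subseteq A_{n^\ast}$: partitioning the visits to any vertex $x$ among the $n^\ast$ complete excursions and the single final (incomplete) one, Lemma~\ref{l.excursion1} caps the visits within each individual excursion at $\deg(x)$, so a vertex counted more than $\deg(x)$ times must appear in at least two excursions and therefore in one of the first $n^\ast$. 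Thus $D$ is finite. Letting $t\to\infty$ in Theorem~\ref{t.lowerbound}(ii) also gives $u_\infty(x)-\deg(x)\le N$ for every $x$, so $C:=\sum_x(u_\infty(x)-\deg(x))^+$ is a finite constant depending only on $m$ and $\rho$ (bounded by $N\,\#A_{n^\ast}$).

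To finish I would bound each visit count by $u_t(x)\le\deg(x)+(u_t(x)-\deg(x))^+\le\deg(x)+(u_\infty(x)-\deg(x))^+$ and sum over $x\in R_t\subseteq B(o,t)$, obtaining $t=\sum_{x\in R_t}u_t(x)\le\Delta_t\,\#R_t+C$, which rearranges to $\#R_t\ge (t-C)/\Delta_t\ge t/\Delta_t-C$ since $\Delta_t\ge1$. I expect the main obstacle to be isolating the correct finiteness statement in (iii): the naive bound $u_t(x)\le u_t(o)+\deg(x)\le N+\deg(x)$ only yields $\#R_t\ge t/(N+\Delta_t)$, so the real content is recognizing that transience forces the over-visited set $D$ to be finite, which lets the excess be absorbed into an additive constant rather than a multiplicative degradation of the denominator.
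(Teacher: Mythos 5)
Your proof is correct, and parts (i) and (ii) coincide with the paper's argument (Corollary~\ref{c.incomplete} for (i); Lemma~\ref{l.excursion1} applied to the incomplete first excursion, plus $t=\sum_{x\in R_t}u_t(x)$, for (ii)). For (iii) you take a genuinely different route. The paper's proof is a three-line time shift: transience gives some $n$ whose $n$th excursion never completes, one sets $C=T(n-1)$, the total length of the completed excursions, and applies part (ii) to the walk restarted at time $T(n-1)$ --- which is at $o$ and never finishes its excursion --- so the last $t-C$ steps alone visit at least $(t-C)/\Delta_t$ sites. You instead keep the entire trajectory and charge the discrepancy to a visit-count defect $C=\sum_x\bigl(u_\infty(x)-\deg(x)\bigr)^+$, proving its finiteness by partitioning visits among the $n^\ast$ completed excursions and the final incomplete one, each capped at $\deg(x)$ visits by Lemma~\ref{l.excursion1} (equivalently Lemma~\ref{l.excursion3}(i)), so that the over-visited set is contained in the finite set $A_{n^\ast}$, with each excess bounded by $N=u_\infty(o)$ via Theorem~\ref{t.lowerbound}(ii). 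Both arguments rest on the same two facts --- the per-excursion cap and the finiteness of the number of completed excursions --- and yield constants of comparable size; the paper's is shorter, while yours buys a mildly stronger structural statement (only finitely many vertices are ever visited more than $\deg(x)$ times by a transient rotor walk) and an intrinsic, time-independent interpretation of $C$. Your closing diagnosis is also exactly right: the naive use of $u_t(x)\le u_t(o)+\deg(x)$ degrades the denominator to $N+\Delta_t$, and the whole point of (iii) is to convert that multiplicative loss into an additive one.
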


\begin{proof}
(i) By Corollary~\ref{c.incomplete}, if $\rho$ has an infinite path directed toward $o$, then rotor walk never completes its first excursion from $o$.

(ii) If rotor walk does not complete its first excursion, then it visits each vertex $x$ at most $\deg(x)$ times by Lemma~\ref{l.excursion1}, so it must visit at least $t/\Delta_t$ distinct vertices.

(iii) If rotor walk is transient, then for some $n$ it does not complete its $n$th excursion, so this follows from part (b) taking $C$ to be the total length of the first $n-1$ excursions.
\end{proof}

\old{ 
In particular, for $\mathbb{Z}^2$ the exact constant can be found as follows. First, we know that at time $t$, the set of sites visited is $\#R_t \ge (2n)^2$, where $n$ is the number of the current excursion. Summing the number of iterations until the $n$th excursion we have that $t  = \sum_{k=1}^{n} (2k)^2 = O( \frac{4n^3}{3})$. From this we get that $t^{2/3} = \left(\frac{4}{3}\right)^{2/3} n^2$, which implies that $\frac{\#R_t}{t^{2/3}} = 4^{1/3} 3^{2/3}$. 
\\
}

\old{
\begin{figure}
\centering
\begin{subfigure}{0.5\textwidth}
  \centering
  \includegraphics[width=1\linewidth]{../code/triangle2.png}
  \label{fig:sub1}
\end{subfigure}%
\begin{subfigure}{0.5\textwidth}
  \centering
  \includegraphics[width=1\linewidth]{../code/triangle5.png}
  \label{fig:sub2}
\end{subfigure}
\caption{Set of sites visited by rotor walk on the triangular lattice after the $2$nd and $15$th excursion respectively.}
\label{trianglepic}
\end{figure}
}

\section{Uniform rotor walk on the comb}
\label{s.comb}

The 2-dimensional \emph{comb} is the subgraph of the square lattice $\mathbb{Z}^2$ obtained by removing all of its horizontal edges except for those on the $x$-axis (Figure~\ref{f.comb}). Vertices on the $x$-axis have degree 4, and all other vertices have degree $2$.  

Recall that the \textbf{uniform rotor walk} starts with independent random initial rotors $\rho(v)$ with the uniform distribution on outgoing edges from $v$. The following result shows that the range of the uniform rotor walk on the comb is close to the diamond 
	\[ D_n := \{(x,y) \in \Z^2 \,:\, |x| + |y| < n \}. \]

\begin{thm}
\label{t.comb}
Consider uniform rotor walk on the comb with any rotor mechanism. Let $n \geq 2$ and $t = \floor{\frac{16}{3} n^3}$.
For any $a>0$ there exist constants $c,C>0$ such that
	\[ \mathbb{P} \{ D_{n - \sqrt{c n log n}} \subset R_t \subset
D_{n + \sqrt{c n log n}} \} > 1 - Cn^{-a}. \]  
\end{thm}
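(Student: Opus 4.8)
\emph{Proof proposal.} The plan is to exploit the fact that the comb is a \emph{tree}: the only cycles it could contain would use horizontal edges off the $x$-axis, and those have been deleted. On a tree the rotor walk is very rigid, and I would analyze it through the \emph{flow} of the walk across each edge together with the abelian property of rotor-routing. Fix the origin $o$ and run the walk through $N$ complete excursions; Corollary~\ref{c.balls} already gives the deterministic inclusion $D_N=B(o,N-1)\subseteq A_N\subseteq R_{T(N)}$, which will seed the inner bound. For a single (say upward) tooth at column $x$, let $f_y$ be the number of upward crossings of the tooth edge at height $y$. Since every sojourn into the tooth returns to its base, $f_y$ also equals the number of downward crossings and $u(x,y)=f_{y-1}+f_y$. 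The first step is a \textbf{one-tooth lemma}: if the base is entered $m$ times then, by the abelian property, the tooth is visited exactly as if $m$ particles were released sequentially from the base, and a short analysis of the degree-$2$ rotors (the first particle sets the local rotor, after which each later particle advances exactly once) gives
\[ f_y = f_{y-1} - \mathbf{1}\{\rho(x,y)\text{ points away from the spine}\}. \]
Thus $f_y$ decreases by an independent $\mathrm{Bernoulli}(1/2)$ at each height until it hits $0$, so the height reached on the tooth is the hitting time of level $m$ by a sum of i.i.d.\ $\mathrm{Bernoulli}(1/2)$ increments: a negative binomial variable concentrated around a deterministic multiple of $m$ with fluctuations of order $\sqrt{m}$. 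Because tooth vertices have degree $2$, this step does not depend on the rotor mechanism.

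Next I would run the same flow analysis along the spine. At a spine vertex the cyclic mechanism splits the $u(x,0)$ visits almost equally among its four directions, so the number $m_x$ of entries into the tooth at $x$ is $\tfrac14 u(x,0)(1+o(1))$, while the rightward spine flow obeys a recursion of the same Bernoulli type as the tooth. Iterating shows that $u(x,0)$, and hence $m_x$, decreases \emph{linearly} in $|x|$ and vanishes at the edge of the range. Feeding this linear profile of entry-counts into the one-tooth lemma shows that the occupied height in column $x$ is, up to fluctuations, a linear function of $|x|$ vanishing at the spine reach, so that $R_t$ is close to a diamond. The radius is then pinned down by the time constraint $t=\sum_v u_t(v)$: this sum is dominated by the tooth contributions $\sum_x 2m_x^2$, and the resulting Riemann sum produces the cubic relation between $t$ and the radius of the stated form $t=\floor{\tfrac{16}{3}n^3}$.

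The two inclusions in the theorem are the lower and upper tail statements for these heights and for the spine flow. Each height, and each partial spine flow, is a sum of $O(n)$ independent bounded increments, so Hoeffding's inequality gives sub-Gaussian tails with variance proxy $O(n)$. To obtain a statement holding \emph{simultaneously} over all $\Theta(n)$ columns I would take a union bound: choosing the deviation threshold $\lambda=\sqrt{cn\log n}$ makes each column fail with probability $O(n^{-a-1})$, so the total failure probability is $O(n^{-a})$, exactly the form claimed. The independence needed for these increments is legitimate because, by the tree structure, the processes on distinct teeth (and on disjoint spine segments) are driven by \emph{disjoint} blocks of the initial rotors $\{\rho(v)\}$. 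Finally I would transfer from ``after $N$ excursions'' to the fixed time $t$: a single excursion at this scale has length $O(n^2)=o(t)$, so the number of completed excursions at time $t$ is determined up to a fluctuation that moves the radius by $o(\sqrt{n\log n})$, and Corollary~\ref{c.balls} controls the last, possibly incomplete, excursion.

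The main obstacle is the \emph{uniform} control over all $\Theta(n)$ teeth at once: the inner inclusion needs that \emph{no} tooth falls short, the outer that \emph{no} tooth overshoots, so I need large-deviation bounds for negative-binomial hitting times sharp enough to survive a union bound over $\Theta(n)$ columns at the borderline scale $\sqrt{n\log n}$. Near the tip of the diamond, where $m_x$ is small, the relative fluctuations are largest and must be treated separately (there the \emph{absolute} deviation is small, which is what rescues the estimate). A secondary difficulty is making the spine recursion rigorous: unlike the degree-$2$ teeth, the degree-$4$ spine vertices make the per-step decrement depend on the mechanism and on the interaction among the three subtrees hanging off each spine vertex, so establishing the linear decay of $u(x,0)$ and the independence required for the concentration step calls for a careful, mechanism-robust version of the one-tooth computation.
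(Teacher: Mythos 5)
Your overall strategy coincides with the paper's: reduce everything to one-dimensional Bernoulli-type concentration on the teeth and along the spine, take a union bound at the scale $\sqrt{cn\log n}$, and convert excursion counts into the fixed time $t$ via the cubic mass identity. But the step you yourself flag as the ``main obstacle''---the spine analysis and its coupling to the teeth---is a genuine gap, and the quantitative form you propose there would fail. You write $m_x=\tfrac14 u(x,0)(1+o(1))$; a multiplicative $(1+o(1))$ error at scale $u\sim n$ is fatal, because the theorem lives inside a window of width $\sqrt{n\log n}$, so you could only afford relative errors of order $\sqrt{(\log n)/n}$, and nothing in your sketch produces that. The paper closes this step with two \emph{exact} combinatorial identities rather than an asymptotic flow estimate: (1) the walk observed at its visits to the $x$-axis is itself a rotor walk on $\Z$ executing zigzags $0\to x_1\to x_{-1}\to x_2\to\cdots$ (Figure~\ref{f.zigzag}), where $x_i$ is the position of the $i$th site whose initial rotor sends the walker left before right, and the number of full rotor turns at $(x,0)$ during $m$ excursions is exactly $(m-K)^+$ when $x\in(x_K,x_{K+1}]$; and (2) each full turn at a spine vertex uses each of the four exits exactly once, so the tooth at $x$ receives exactly $M=u_m(x,0)$ zigzags---no $\tfrac14(1+o(1))$ enters anywhere. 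With $K$ within $a=\sqrt{cn\log n}$ of $x/2$ on the good event, this gives the purely \emph{additive} sandwich $f_{m-2a}\le u_m\le f_{m+2a}$ of Lemma~\ref{shape}, where $f_m(x,y)=(m-\tfrac{|x|}{2}-\tfrac{|y|}{2})^+$.

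Your independence worries also dissolve in the paper's formulation: the bad event $\mathcal{B}_1$ of Lemma~\ref{smallprob} is defined purely in terms of the \emph{initial} rotors---the turning points $x_i$ and $y_{k,j}$ are functions of $\rho$ alone, sums of i.i.d.\ indicators---so the only probabilistic input is a Chernoff bound plus a union bound over the $O(n^2)$ variables $y_{k,j}$, $|j|,|k|\le n$ (one event per column is not enough bookkeeping), and everything downstream (Lemmas~\ref{shape} and~\ref{l.theend}) is a deterministic statement on $\mathcal{B}_1^c$: no coupling, martingale, or conditioning on random entry counts is needed. The tip-of-the-diamond issue you raise is then automatic, since deviations are additive in the turning points and the positive part in $f_m$ truncates correctly; and your time conversion matches the paper's ($\|f_m\|=\tfrac{16}{3}m^3+O(m^2)$ and $\|u_M\|\le t\le\|u_{M+2}\|$ give $|M-n|\le 3a$, an $O(a)$ rather than $o(a)$ fluctuation, which suffices). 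One caution in your favor: your instinct that the degree-$4$ spine is mechanism-sensitive is sound. With a uniform initial rotor, ``$W$ exits before $E$'' has probability $1/2$ exactly when $E$ and $W$ are antipodal in the cyclic order (as for the clockwise mechanism); if they are cyclically adjacent the probability is $1/4$ or $3/4$, which would stretch the diamond and change the constant $\tfrac{16}{3}$. The paper's proof simply asserts mean $2|i|$ for the $x_i$ for any mechanism, so the breeziness you sensed at the spine is the one point where the paper itself deserves scrutiny---but the heavy machinery you proposed is not the fix; checking the Bernoulli parameter of the induced walk on $\Z$ is.
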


Since the bounding diamonds have area $2n^2(1+o(1))$,
it follows that the size of the range is of order $t^{2/3}$: More precisely, by Borel-Cantelli,
	\[ \frac{\# R_t}{t^{2/3}} \to \left(\frac{3}{2} \right)^{2/3} \]
as $t\to \infty$, almost surely.

\begin{figure}[h!]
\centering
 \begin{tikzpicture}[scale=1.2,>=triangle 60]
 
  \draw[<-,thick,shorten >=4pt] (0,1) -- (1,1);

 \draw[<-,shorten >=4pt] (1,1) -- (2,1);
  \draw[<-,thick,shorten >=4pt] (2,1) -- (3,1);
  \draw[->,thick,shorten >=4pt] (3,1) -- (4,1);
  \draw[<-,thick,shorten >=4pt] (4,1) -- (5,1);

  \draw[->,thick,shorten >=4pt] (-1,1) -- (0,1);
  \draw[<-,thick,shorten >=4pt] (-2,1) -- (-1,1);
  \draw[->,thick,shorten >=4pt] (-3,1) -- (-2,1);
  
         \node[color=black,circle,fill=black,inner sep =0pt,minimum size=4pt] at (-1,1) {};
             \node[color=black,circle,fill=black,inner sep=0pt,minimum size=5pt] at (-2,1) {};
        \node[color=black,circle,fill=black,inner sep=0pt,minimum size=5pt] at (-3,1) {};
         \node[color=black,circle,fill=black,inner sep=0pt,minimum size=5pt] at (0,1) {};
        \node[color=black,circle,fill=black,inner sep=0pt,minimum size=5pt] at (1,1) {};
         \node[color=black,circle,fill=black,inner sep=0pt,minimum size=5pt] at (2,1) {};
          \node[color=black,circle,fill=black,inner sep=0pt,minimum size=5pt] at (3,1) {};
          \node[color=black,circle,fill=black,inner sep=0pt,minimum size=5pt] at (4,1) {};
           \node[color=black,circle,fill=black,inner sep=0pt,minimum size=5pt] at (5,1) {};

\node[color=black]  (o) at (0,0) {o};                 
       \node[color=black,thick] (a1) at (3,0) {$x_1$};
        \node[color=black] (b1) at (-1,-1) {$x_{-1}$};
        \node[color=black] (a2) at (5,-2) {$x_2$};
        \node[color=black] (b2) at (-3,-3) {$x_{-2}$};

 \draw[->,color=black,shorten >=0.3pt] (o) -- (a1);
  \draw[color=black,shorten >=0.33pt] (a1) -- (3,-1);

 \draw[->,color=black,shorten >=0.3pt] (3,-1) -- (b1);
  \draw[color=black] (b1) -- (-1,-2);

 \draw[->,color=black] (-1,-2) -- (a2);
  \draw[color=black] (a2) -- (5,-3);

 \draw[->,color=black] (5,-3) -- (b2);

            \end{tikzpicture}
\caption{An initial rotor configuration on $\mathbb{Z}$ (top) and the corresponding rotor walk.}
\label{f.zigzag}
\end{figure}
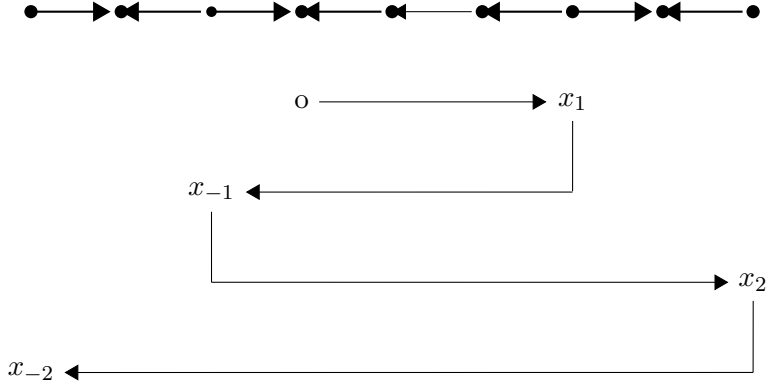

The proof of Theorem~\ref{t.comb} is based on the observation that rotor walk on the comb, viewed at the times when it is on the x-axis, is a rotor walk on $\Z$.
If $0< x_1 < x_2 < \ldots$ are the positions of rotors on the positive x-axis that will send the walker left before right, and $0> x_{-1} > x_{-2} > \ldots$ are the positions on the negative x-axis that will send the walker right before left, then the $x$-coordinate of the rotor walk on the comb follows a zigzag path: right from $0$ to $x_1$, then left to $x_{-1}$, right to $x_2$, left to $x_{-2}$, and so on (Figure~\ref{f.zigzag}).

Likewise, rotor walk on the comb, viewed at the times when it is on the a fixed vertical line $x=k$, is also a rotor walk on $\Z$. Let $0< y_{k,1} < y_{k,2} < \ldots$ be the heights of the rotors on the line $x=k$ above the x-axis that initially send the walker down, and let $0> y_{k,-1} > y_{k,-2} > \ldots$ be the heights of the rotors on the line $x=k$ below the x-axis that initially send the walker up. 

If the initial rotors are i.i.d.\ uniform then the random variables $x_i$ and $y_{k,i}$ 
have mean $2|i|$. Consider the ``bad event'' that one of the random variables $x_i$ or $y_{k,i}$ for $|i| \leq n$ is particularly far from its mean:
 	\[ \mathcal{B}_1 = \mathcal{B}_1(n,c) = \bigcup_{i=-n}^n \{|x_i - 2|i|| > \sqrt{c n\log n}\} \cup \bigcup_{j,k=-n}^{n} \{|y_{k,j} -2|j|| > \sqrt{c n \log{n}}\}. \]
The proof will be completed by the following three lemmas. The first shows that $\mathcal{B}_1$ is unlikely.  The second shows that if $\mathcal{B}_1$ does not occur then the odometer function of the first $m$ excursions is close to the function 
	\begin{equation} \label{e.thediamond} f_m(x,y) = (m-\frac{|x|}{2}-\frac{|y|}{2})^+, \end{equation}
where we write $t^+ := \max(t,0)$. 
Note that the contour lines of this function are diamonds!  
The third lemma says that this forces the range $R_t$ to be close to a diamond.

\begin{lemma}\label{smallprob}
For each $b>0$ there exist $C,c>0$ such that
\[ \mathbb{P}(\mathcal{B}_1(n,c)) < Cn^{-b}. \] 
\end{lemma}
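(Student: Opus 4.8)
The plan is to identify each $x_i$ and $y_{k,j}$ as a sum of independent geometric random variables, and then combine a Chernoff-type tail bound with a union bound over the $\bO(n^2)$ events comprising $\mathcal{B}_1$. To set up the i.i.d.\ structure, look along the positive $x$-axis and let $\xi_m$ be the indicator that the rotor at $(m,0)$ sends the walker left before right. Since the initial rotors are i.i.d.\ and $\xi_m$ depends only on $\rho((m,0))$, the $\xi_m$ are i.i.d.; as recorded in the setup their success probability is $1/2$, so $x_i$, the location of the $i$th such site, is the $i$th success time of fair Bernoulli trials. Equivalently $x_i=\sum_{\ell=1}^{i}G_\ell$ with $G_\ell$ i.i.d.\ geometric of mean $2$, whence $\mathbb{E}x_i=2i$. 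The same description applies to $x_{-i}$ (negative axis) and, for each column $k$, to $y_{k,j}$ and $y_{k,-j}$, where the relevant tooth vertices have degree $2$ and so point down (resp.\ up) with probability exactly $1/2$. Thus every centered variable appearing in $\mathcal{B}_1$ is a sum of at most $n$ i.i.d.\ centered geometric variables, which are sub-exponential.

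Next I would invoke the standard exponential tail bound for such sums: there is an absolute constant $\kappa>0$ such that for all $1\le i\le n$ and all $\lambda>0$,
\[ \mathbb{P}\bigl(|x_i-2i|>\lambda\bigr)\le 2\exp\!\Bigl(-\kappa\min\bigl(\lambda^2/i,\ \lambda\bigr)\Bigr), \]
with the identical bound for $x_{-i}$ and $y_{k,\pm j}$. Specializing to the deviation window $\lambda=\sqrt{cn\log n}$ and using $i\le n$ gives $\lambda^2/i\ge c\log n$, while for $n$ large one also has $\lambda\ge c\log n$, so the minimum in the exponent equals $c\log n$. Hence each individual event in the definition of $\mathcal{B}_1$ has probability at most $2n^{-\kappa c}$.

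Finally I would apply the union bound. The first union runs over the $2n+1$ indices $i$ with $|i|\le n$, and the second over the $(2n+1)^2$ pairs $(k,j)$, so $\mathcal{B}_1$ is a union of at most $C_0 n^2$ events, giving
\[ \mathbb{P}(\mathcal{B}_1(n,c))\le 2C_0\, n^{2-\kappa c}. \]
Choosing $c=(b+2)/\kappa$ forces the exponent below $-b$, and after renaming the constant this is exactly the claimed bound $\mathbb{P}(\mathcal{B}_1(n,c))<Cn^{-b}$.

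The concentration step itself is routine, so the real content is twofold. The combinatorial part is the clean i.i.d.\ geometric description, in particular that each turning event has probability exactly $1/2$, which is what centers the variables at $2|i|$. The quantitative part, which I expect to be the main thing to get right, is the bookkeeping forced by the second union: because it ranges over $\Theta(n^2)$ teeth events, the per-event probability must decay faster than $n^{-2}$, and this is precisely why the deviation is taken on the scale $\sqrt{cn\log n}$ with $c$ (and hence the final constants $C,c$) allowed to grow with $b$.
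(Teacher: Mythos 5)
Your proposal is correct and follows essentially the same route as the paper: identify the turning indicators as i.i.d.\ with success probability $1/2$ (degree-$2$ tooth vertices make this immediate, and the paper makes the same claim for the axis), apply an exponential concentration bound at deviation scale $\sqrt{cn\log n}$, and union-bound over the $\Theta(n^2)$ events, choosing $c$ large in terms of $b$. The only cosmetic difference is that the paper bounds the tails via the duality $\mathbb{P}(x_k\le N)=\mathbb{P}(B_N\ge k)$ and Hoeffding for centered $\pm1$ sums, while you work directly with $x_i$ as a sum of i.i.d.\ geometric variables and invoke a Bernstein-type sub-exponential bound; both are standard and yield the same polynomial decay.
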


\begin{lemma}\label{shape}
For rotor walk on the comb, $u_m(x,y)$ be the total number of full turns made by the rotor at position $(x,y)$ during the first $m$ excursions. Let $a = \sqrt{c n \log n}$ and
	\[ \mathcal{B}_2 = \mathcal{B}_2(n,c) = \bigcup_{m=1}^n \bigcup_{(x,y) \in \Z^2} \{ f_{m-2a}(x,y) \leq u_m(x,y) \leq f_{m+2a}(x,y) \}^c. \]
Then $\mathcal{B}_2 \subset \mathcal{B}_1$.
\end{lemma}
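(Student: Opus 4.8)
The plan is to prove the contrapositive inclusion: assuming $\mathcal{B}_1$ does \emph{not} occur---so that every reflector $x_i$ ($|i|\le n$) and $y_{k,j}$ ($|j|,|k|\le n$) lies within $a=\sqrt{cn\log n}$ of its mean---I will show that
\[ f_{m-2a}(x,y)\le u_m(x,y)\le f_{m+2a}(x,y) \]
holds simultaneously for every $1\le m\le n$ and every $(x,y)\in\Z^2$, which is exactly the statement $\omega\notin\mathcal{B}_2$. The whole argument rests on the two one-dimensional reductions described just before the lemma: watched at its visits to the $x$-axis the walk is a rotor walk on $\Z$ with reflectors $\{x_i\}$, and watched at its visits to the vertical line $x=k$ it is a rotor walk on $\Z$ with reflectors $\{y_{k,j}\}$.

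First I would record an exact formula for the odometer of a one-dimensional rotor walk in terms of its reflectors, which is the combinatorial heart of the proof. For the axis walk the zigzag picture of Figure~\ref{f.zigzag} says that the $j$th rightward sweep turns around at $x_j$ and the $j$th leftward sweep at $x_{-j}$; hence for $x>0$ the foot vertex $(x,0)$ is crossed once rightward and once leftward for each index $j\le m$ with $x_j\ge x$, giving
\[ u_m(x,0)=\#\{1\le j\le m:\ x_j\ge x\}+O(1), \]
and symmetrically for $x<0$. Each tooth is a \emph{ray}, for which the corresponding statement is that the $i$th entry into the ray reaches exactly the $i$th reflector $y_{x,i}$; a height $h>0$ is then fully turned once by every entry that overshoots it, so
\[ u_m(x,h)=\#\{1\le i\le N_x:\ y_{x,i}>h\}+O(1),\qquad N_x:=u_m(x,0), \]
and symmetrically for $h<0$. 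I would prove both identities by the same induction on the number of sweeps (resp.\ entries), using that a rotor returns to its initial position after each full turn; the bounded additive errors reflect the current rotor phase and are harmlessly absorbed because $a\to\infty$.

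The decisive structural point---and the step I expect to be the main obstacle---is the \emph{coupling} encoded by $N_x=u_m(x,0)$: the number of excursions the walk makes into the tooth at column $x$ is itself the random axis odometer at the foot of that tooth. Thus the tooth estimate is a composition, counting tooth reflectors below $h$ but only among the first $N_x$ entries, where $N_x$ is governed by the axis reflectors. On the complement of $\mathcal{B}_1$ the bound $x_j\in[2j-a,2j+a]$ turns the first identity into $u_m(x,0)\in[\,f_m(x,0)-\tfrac a2,\ f_m(x,0)+\tfrac a2\,]$; feeding this together with $y_{x,i}\in[2i-a,2i+a]$ through the ray identity yields
\[ u_m(x,y)\in[\,f_m(x,y)-a,\ f_m(x,y)+a\,]. \]
Since $f_{m\pm2a}(x,y)=f_m(x,y)\pm 2a$ wherever $f_m>0$, and $f$ is monotone in $m$ so the inequalities degrade gracefully to $0$ at the rim of the diamond (using the $(\cdot)^+$ truncation), the total error $a$ sits comfortably inside the window $2a$; this is precisely the slack the factor $2a$ in the definition of $\mathcal{B}_2$ was built to provide.

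Two points will need care. The first is propagating concentration through the composition $N_x\mapsto u_m(x,y)$, where the two independent layers of randomness (axis reflectors and tooth reflectors) interact and one must verify the errors add rather than multiply, staying below $a$. The second, and the genuinely delicate bookkeeping, is to confirm that every column and height actually reached for $m\le n$ involves only reflectors with indices $|i|,|j|,|k|\le n$, so that each reflector entering the two identities is indeed pinned near its mean by $\neg\mathcal{B}_1$; controlling this is what confines $R_t$ to the advertised diamond and closes the inclusion $\mathcal{B}_2\subset\mathcal{B}_1$. Combined with Lemma~\ref{smallprob}, which makes $\mathcal{B}_1$ unlikely, this sandwich is what will drive the shape theorem.
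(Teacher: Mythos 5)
Your plan is essentially the paper's own proof: the paper likewise reduces first to the axis zigzag, where $u_m(x,0)=(m-K)^+$ for $x\in(x_K,x_{K+1}]$ gives $(m-\frac{x}{2}-a)^+\le u_m(x,0)\le(m-\frac{x}{2}+a)^+$ on $\mathcal{B}_1^c$, and then composes with the tooth zigzag using exactly your coupling $M=u_m(x,0)$, the two errors adding (not multiplying, since the tooth bound is applied with $u_m(x,0)$ already sandwiched) to fit inside the $2a$ window of $f_{m\pm 2a}$. The index bookkeeping you flag as delicate is dispatched in the paper by one short case: when the random index $K$ exceeds $n$ (so $x_{K+1}$ is not controlled by $\mathcal{B}_1^c$), one has $u_m(x,0)=0$ and $x>x_n\ge 2n-a$, so the sandwich holds trivially, and the tooth indices are automatically in range because the number of tooth entries is at most $m\le n$.
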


\begin{lemma}\label{l.theend}
Let $t= \floor{\frac{16}{3}n^3}$ and $a = \sqrt{c n \log n}$ and
\[ \mathcal{B}_3 = \mathcal{B}_3(n,c) = \{D_{n-6a} \subset R_{t} \subset D_{n+6a} \}^c. \] 
Then
	$\mathcal{B}_3(n,c) \subset \mathcal{B}_2(2n,c)$.
\end{lemma}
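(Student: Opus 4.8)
The plan is to derive the two inclusions defining $\mathcal{B}_3(n,c)^c$ \emph{deterministically} on the good event $\mathcal{B}_2(2n,c)^c$, on which $f_{m-2a}(x,y)\le u_m(x,y)\le f_{m+2a}(x,y)$ for all $1\le m\le 2n$ and all $(x,y)\in\Z^2$. I proceed in three steps: (a) identify the number $M=M(t)$ of excursions completed by time $t$; (b) record the nesting sandwich $A_M\subseteq R_t\subseteq A_{M+1}$; and (c) trap each excursion range $A_m$ between two diamonds. Step (b) is immediate: the ranges $A_0\subseteq A_1\subseteq\cdots$ are nested by Lemma~\ref{l.excursion3}(iii), $A_m=\{(x,y):u_m(x,y)>0\}$, every site of the first $M$ excursions is visited by time $T(M)\le t$, and every site visited by time $t\le T(M+1)$ already lies in $A_{M+1}$. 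Since $M+1\le 2n$ for large $n$, the good event controls the odometer at all indices that arise, which is exactly why $\mathcal{B}_2$ is invoked at parameter $2n$.

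For Step (a) I would compute the elapsed time as a degree-weighted odometer sum. The number of steps to complete $m$ excursions equals the total number of rotor exits,
\[ T(m)=\sum_{(x,y)}\deg(x,y)\,u_m(x,y)+O(|A_m|), \]
the error being the partial turns at sites first reached during the $m$th excursion, so $O(|A_m|)=O(m^2)$. A direct evaluation of the weighted diamond sum gives
\[ \sum_{(x,y)}\deg(x,y)\,f_k(x,y)=\tfrac{16}{3}k^3+O(k^2), \]
with the leading term supplied by the degree-$2$ off-axis sites and matching $2\iint (k-|x|/2-|y|/2)^+\,dx\,dy$. Feeding in the good-event inequalities yields $\tfrac{16}{3}(m-2a)^3-O(n^2)\le T(m)\le \tfrac{16}{3}(m+2a)^3+O(n^2)$ for every $m\le 2n$. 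Because $t=\floor{\tfrac{16}{3}n^3}$ and the resolution gap $\tfrac{16}{3}\big[(n+2a)^3-n^3\big]\asymp a n^2$ dominates the $O(n^2)$ error (as $a=\sqrt{cn\log n}\to\infty$), inverting this cubic pins the excursion count to $|M(t)-n|\le 3a$ for large $n$.

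Step (c) is geometric. The set $\{f_k>0\}$ is exactly the diamond $\{|x|+|y|<2k\}=D_{2k}$, so the good-event inequalities upgrade to $D_{2(m-2a)}\subseteq A_m\subseteq D_{2(m+2a)}$: if $f_{m-2a}(x,y)>0$ then $u_m(x,y)\ge f_{m-2a}(x,y)>0$, while $u_m(x,y)>0$ forces $f_{m+2a}(x,y)>0$. Combining with $A_M\subseteq R_t\subseteq A_{M+1}$ and $M(t)=n+O(a)$ traps $R_t$ between two diamonds whose radii agree up to $O(a)$ (their common value being the support scale $2M(t)=2n+O(a)$), which is precisely the pair of inclusions in $\mathcal{B}_3(n,c)^c$ once the shifts $2a$ and the deviation $3a$ are absorbed into the stated constant. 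I expect Step (a) to be the crux: one must pin the number of completed excursions to within $O(a)$, correctly accommodate the partial $(M+1)$st excursion, and verify that every lower-order term---the on-axis correction, the partial-turn error, and the error incurred inverting the cubic---remains below the $O(a)$ scale. Once $|M(t)-n|=O(a)$ is secured, Steps (b) and (c) follow mechanically.
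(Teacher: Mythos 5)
Your proposal is essentially the paper's own proof: the paper likewise introduces the degree-weighted norm $\|f\| := \sum_z \deg(z) f(z)$, computes $\|f_m\| = \frac{16}{3}m^3 + O(m^2)$ by summing over diamond layers, sandwiches the elapsed time between weighted odometer sums (the paper writes $\|u_M\| \leq t \leq \|u_{M+2}\|$, absorbing your $O(\#A_m)$ partial-turn error by invoking Lemma~\ref{l.excursion3}(ii) at the next excursion index rather than carrying an explicit error term), inverts the cubic to get $|M-n| \leq 3a$ on $\mathcal{B}_2(2n,c)^c$, and converts the odometer bounds into diamond inclusions through $A_m = \{u_m > 0\}$ together with $A_M \subseteq R_t \subseteq A_{M+1}$.

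The one substantive step you elide is the a priori bound on $M$. You assert ``$M+1 \leq 2n$ for large $n$'' as if automatic, but $M$ is random and $\mathcal{B}_2(2n,c)^c$ only controls $u_m$ for $m \leq 2n$, so none of the good-event inequalities may be applied at indices $M$, $M+1$, $M+2$ until $M$ has been bounded; your Step (a) as written is circular on this point. The paper proves this first: on $\mathcal{B}_2(2n)^c$, if $M > \frac32 n$ then by monotonicity of $u_m$ in $m$ one gets $t \geq \|u_M\| \geq \|f_{\frac32 n - 2a}\| = (1-o(1))\left(\frac32\right)^3 \frac{16}{3} n^3 > t$ for large $n$, a contradiction. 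Your own estimates (valid for all $m \leq 2n$) plus monotonicity of $T$ repair the gap in exactly this way, so it is easily filled---but it is precisely the step the paper does not treat as automatic. Separately, you are right that the support scale is $2M(t) = 2n + O(a)$ rather than $n$, since $\{f_k > 0\} = \{|x|+|y| < 2k\} = D_{2k}$; your claim that this factor of $2$ is ``absorbed into the stated constant'' $6a$ is not literally valid ($2n$ versus $n$ is not an $O(a)$ discrepancy). However, the paper's own final display ($A_{n+3a+1} \subset D_{n+5a+1}$) commits the same factor-of-two slip, so on this point your write-up matches---and in fact diagnoses more candidly than---the argument in the paper.
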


\begin{proof}[Proof of Lemma~\ref{smallprob}]
Consider the random variable $B_N = \xi_1 + \xi_2 + \ldots + \xi_N$,
where $\xi_i$ is $1$ or $0$ according to whether the rotor at $(i,0)$ will send a particle left before right. 
Note that $\mathbb{P}(x_k \leq N) = \mathbb{P}(B_N \geq k) = \mathbb{P}(S_N \geq 2k-N)$, where $S_N = 2B_N - N$ is a sum of independent zero-mean $\pm 1$-valued random variables.  By the usual Chernoff bound (see, for example, \cite[A.1]{tpm}),
	\[ \mathbb{P}(S_N > a) < e^{-a^2/2N}. \] 
Taking $a = \sqrt{c n \log n}$ and $N = 2k - a$ we obtain
	\[ P(x_k < 2k - a) < e^{-c n \log n / 2N} < e^{-c n \log n / 4n} = n^{-c/4} \]
where we have used that $k \leq n$. Likewise, taking $N=2k+a$ yields
	\[ P(x_k > 2k + a) < P(S_{N}<a) < Cn^{-c/5} \]
for sufficiently large constant $C$. Analogous bounds hold for $k<0$ and for the $y_{k,j}$'s. Now the proof is completed with a union bound
\begin{align*} 
\mathbb{P}(\mathcal{B}_1) 
& \le  \sum_{i=-n}^n \mathbb{P}\left( |x_i - 2i| > a \right) + \sum_{j,k=-n}^n \mathbb{P}\left( |y_{k,j} - 2j| > a \right)  \\
& \le  ((2n) + (2n)^2) (2Cn^{-c/5}) \\
& \le  C' n^{-b}
\end{align*}
where $b = 2-\frac{c}{5}$.
\end{proof}

\begin{proof}[Proof of Lemma~\ref{shape}]
Fix $1 \leq m \leq n$ and a point $(x,y) \in \Z^2$. We must show that on the event $\mathcal{B}_1^c$ we have
	\begin{equation} \label{e.goal} f_{m-2a}(x,y) \leq u_m(x,y) \leq f_{m+2a}(x,y).  \end{equation}
By symmetry we may assume $x,y \geq 0$.
In order to complete $m$ excursions on the comb, the rotor walk viewed on the $x$-axis must complete $m$ zigzags as in Figure~\ref{f.zigzag}, 
	\[ 0 \to x_1 \to x_{-1} \to \ldots \to x_m \to x_{-m} \to 0.  \]
If $x \in (x_K, x_{K+1}]$ then exactly $(m-K)^+$ of these zigzags cross $x$, so
	\[ u_m(x,0) = (m-K)^+. \]
We have used a capital letter to remind you that the index $K$ is random! On the event $\mathcal{B}_1^c \cap \{K < n\}$ we have the inequalities
	\begin{align*} 2K - a &\leq x_K \\ &< x \\ &\leq x_{K+1} \\ &\leq 2(K+1) + a, \end{align*}
which imply $|K- \frac{x}{2}| < a$.  Hence
	\begin{equation} \label{e.xaxis}  (m - \frac{x}{2}-a)^+ \leq u_m(x,0) \leq (m - \frac{x}{2}+a)^+.  \end{equation}
On the event $\mathcal{B}_1^c \cap \{K \geq n\}$ we have $u_m(x,0) = 0$ and $x > 2n - a$, so \eqref{e.xaxis} holds also in this case.

Having taken care of the $x$-axis, we now apply the same argument on each vertical tooth of the comb. The rotor walk viewed on the tooth passing through the point $(x,0)$ completes $M$ zigzags,
	\[ 0 \to y_{x,1} \to y_{x,-1} \to \ldots \to y_{x,M} \to y_{x,-M} \to 0 \]
where $M =u_m(x,0)$.  So on the event $\mathcal{B}_1^c$ we have
	\[ (u_m(x,0) - \frac{y}{2}-a)^+ \leq u_m(x,y) \leq (u_m(x,0) - \frac{y}{2}+a)^+. \]
This bound together with the $x$-axis bound \eqref{e.xaxis} (and the fact that if $u_m(x,0)=0$ then $u_m(x,y)=0$ for all $y \in \Z$) yields \eqref{e.goal}.
Hence $\mathcal{B}_2 \subset \mathcal{B}_1$.
\end{proof}

\begin{proof}[Proof of Lemma~\ref{l.theend}]
For a function $f$ on the vertices of the comb, write
	\[ \| f\| := \sum_{z} \deg(z)f(z) = 2\sum_{x \in \Z} f(x,0) + 2 \sum_{(x,y) \in \Z^2} f(x,y). \]
Summing in diamond layers shows that for the function $f_m$ of \eqref{e.thediamond},
	\[ \| f_m \| = 2 \sum_{\ell=1}^{2m} (4\ell)(m-\frac{\ell}{2}) + O(m^2) = \frac{16}{3}m^3 + O(m^2). \]

Now let $M$ be the (random) number of excursions completed by time $t$. Then
	\[ \| u_M \| \leq t \leq \| u_{M+2} \|.  \]
We first argue that $\{M > \frac32 n\} \subset \mathcal{B}_2(2n)$ for sufficiently large $n$; indeed, on the event $\mathcal{E}:=\{M> \frac32 n\} \cap \mathcal{B}_2(2n)^c$ we have
	\[ t \geq \| u_M \| \geq \| f_{\frac32 n - 2a} \|.  \]
Recall that $t = \frac{16}{3}n^3$. The right side is $(1-o(1))(\frac32)^3 \frac{16}{3}n^3$, so $\mathcal{E}$ is empty for sufficiently large $n$.

Therefore on the event $\mathcal{B}_2(2n)^c$ we have
	\[ \|f_{M-2a} \| \leq \| u_M \| \leq t \leq \| u_{M+2} \| \leq \| f_{M+2+2a} \| \]
and taking cube roots we obtain 
	\[ |M - n| \leq 3a. \]
Finally, writing $A_m = \{u_m >0\}$, on $\mathcal{B}_2(2n)^c$ we have
	 \[ D_{n-5a} \subset A_{n-3a} \subset R_t \subset A_{n+3a+1} \subset D_{n+5a+1} \]
which completes the proof.
\end{proof}

\section{Directed lattices and the mirror model}
\label{s.pinball}

Figure~\ref{f.directed} shows two different orientations of the square grid $\Z^2$: The \emph{F- lattice} has outgoing vertical arrows (N and S) at even sites, and outgoing horizontal arrows (E and W) at odd sites. The \emph{Manhattan lattice} has every even row pointing $E$, every odd row pointing $W$, every even column pointing $S$ and every odd column pointing $N$. In these two lattices every vertex has outdegree $2$, so there is a unique rotor mechanism on each lattice (namely, exits from a given vertex alternate between the two outgoing edges) and a rotor walk is completely specified by its starting point and the initial rotor configuration $\rho$.

In this section we relate the uniform rotor walk on these lattices to percolation and the Lorenz mirror model \cite[\textsection13.3]{perc}.  Consider the \emph{half dual lattice} $\mathbb{L}$, a square grid whose vertices are the points $(x+\frac12, y+\frac12)$ for $x,y \in \Z$ with $x+y$ even.  We consider critical bond percolation on $\mathbb{L}$: each edge of $\mathbb{L}$ is either open or closed, independently with probability $\frac12$.

Note that each vertex $v$ of $\Z^2$ lies on a unique edge $e_v$ of $\mathbb{L}$. We consider two different rules for placing two-sided mirrors at the vertices of $\Z^2$.

\begin{itemize}
\item Manhattan lattice: If $e_v$ is closed then $v$ has a mirror oriented parallel to $e_v$; otherwise $v$ has no mirror.
\item F-lattice: Each vertex $v$ has a mirror, which is oriented parallel to $e_v$ if $e_v$ is closed and perpendicular to $e_v$ if $e_v$ is open.
\end{itemize}

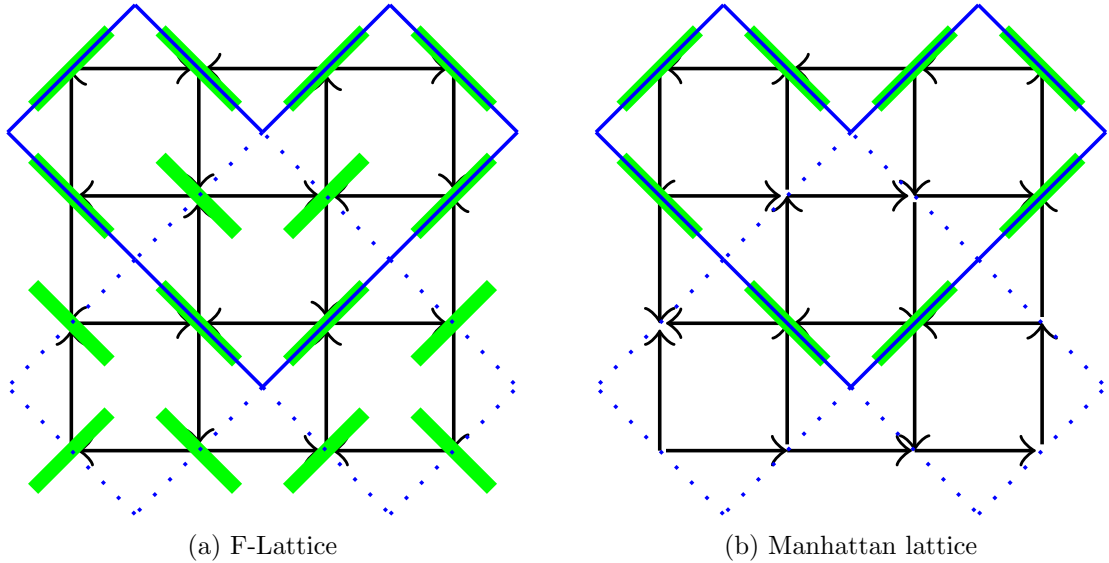
\begin{figure}[h!]
\captionsetup{width=0.8\textwidth}

\centering
    \begin{subfigure}[b]{0.45\textwidth}
        \centering
        \resizebox{\linewidth}{!}{
            \begin{tikzpicture}[scale=0.5]

 \draw[color=black,->, shorten >=0.7pt] (-2,-2) -- (-2,-1);\draw[color=black,->, shorten >=0.7pt] (-1,-2) -- (0,-2);\draw[color=black,->, shorten >=0.7pt] (-1,-2) -- (-2,-2);\draw[color=black, ->,shorten >=0.7pt] (0,-2) -- (0,-1);\draw[color=black, ->, shorten >=0.7pt] (1,-2) -- (0,-2);\draw[color=black, ->, shorten >=0.7pt] (-2,-1) -- (-1,-1);\draw[color=black, ->, shorten >=0.7pt] (-1,-1) -- (-1,0);\draw[color=black, ->, shorten >=0.7pt] (-1,-1) -- (-1,-2);\draw[color=black, ->, shorten >=0.7pt] (0,-1) -- (1,-1);\draw[color=black, ->, shorten >=0.7pt] (0,-1) -- (-1,-1);\draw[color=black, ->, shorten >=0.1pt] (1,-1) -- (1,0);\draw[color=black, ->, shorten >=0.1pt] (1,-1) -- (1,-2);\draw[color=black, ->, shorten >=0.1pt] (-2,0) -- (-2,1);\draw[color=black, ->, shorten >=0.1pt] (-2,0) -- (-2,-1);\draw[color=black, ->, shorten >=0.7pt] (-1,0) -- (0,0);\draw[color=black, ->,shorten >=0.7pt] (-1,0) -- (-2,0);\draw[color=black, ->,shorten >=0.7pt] (0,0) -- (0,1);\draw[color=black, ->,shorten >=0.7pt] (0,0) -- (0,-1);\draw[color=black, ->,shorten >=0.7pt] (1,0) -- (0,0);\draw[color=black, ->,shorten >=0.7pt] (-2,1) -- (-1,1);\draw[color=black, ->,shorten >=0.7pt] (-1,1) -- (-1,0);\draw[color=black, ->,shorten >=0.7pt] (0,1) -- (1,1);\draw[color=black, ->,shorten >=0.7pt] (0,1) -- (-1,1);\draw[color=black, ->,shorten >=0.7pt] (1,1) -- (1,0);

\draw[color=green,ultra thick] (-2.3,-2.3) -- (-1.7,-1.7);
\draw[color=green,ultra thick] (-0.3,-2.3) -- (0.3,-1.7);

\draw[color=green,ultra thick] (-0.7,-1.3) -- (-1.3,-0.7);
\draw[color=green,ultra thick]  (-0.3,-0.3) -- (0.3,0.3);
\draw[color=green,ultra thick]  (-0.7,0.7) -- (-1.3,1.3);

\draw[color=green,ultra thick] (-1.7,-0.3) -- (-2.3,0.3);
\draw[color=green,ultra thick] (-2.3,0.7) -- (-1.7,1.3);
\draw[color=green,ultra thick] (-0.3,0.7) -- (0.3,1.3);
\draw[color=green,ultra thick] (1.3,0.7) -- (0.7,1.3);
\draw[color=green,ultra thick] (0.7,-1.3) -- (1.3,-0.7);
\draw[color=green,ultra thick] (1.3,-2.3) -- (0.7,-1.7);

\draw[color=green,ultra thick] (-0.7,-0.3) -- (-1.3,0.3);
\draw[color=green,ultra thick] (-0.7,-2.3) -- (-1.3,-1.7);
\draw[color=green,ultra thick] (0.7,-0.3) -- (1.3,0.3);
\draw[color=green,ultra thick] (-0.3,-1.3) -- (0.3,-0.7);
\draw[color=green, ultra thick] (-1.7,-1.3) -- (-2.3,-0.7);

\draw[color=blue] (-0.5,-1.5) -- (0.5,-0.5);
\draw[color=blue,dotted]  (0.5,-0.5) -- (-0.5,0.5);
\draw[color=blue,dotted]  (-0.5,0.5) -- (-1.5,-0.5);
\draw[color=blue]  (-1.5,-0.5) -- (-0.5,-1.5);

\draw[color=blue,dotted]  (-1.5,-2.5) --  (-2.5,-1.5);
\draw[color=blue,dotted]   (0.5,-2.5) -- (1.5,-1.5);

\draw[color=blue]   (1.5,0.5) -- (0.5,1.5);
\draw[color=blue]   (-1.5,1.5) -- (-2.5,0.5);

\draw[color=blue,dotted]  (-0.5, -1.5) -- (-1.5,-2.5);

\draw[color=blue]  (0.5, -0.5) -- (1.5,0.5);

\draw[color=blue]  (-0.5, 0.5) -- (0.5,1.5);

\draw[color=blue,dotted]  (-1.5, -0.5) -- (-2.5,-1.5);

\draw[color=blue,dotted]  (-0.5, -1.5) -- (0.5,-2.5);

\draw[color=blue,dotted]  (0.5, -0.5) -- (1.5,-1.5);

\draw[color=blue]  (-0.5, 0.5) -- (-1.5,1.5);

\draw[color=blue]  (-1.5, -0.5) -- (-2.5,0.5);
            \end{tikzpicture}
        }
        \caption{\small F-Lattice}
        \label{fig:subfig8}
    \end{subfigure}
    \begin{subfigure}[b]{0.45\textwidth}
    \centering
        \resizebox{\linewidth}{!}{
            \begin{tikzpicture}[scale=0.5]

            \draw[color=black,->, shorten >=0.3pt] (-1,-2) -- (0,-2);
\draw[color=black, ->, shorten >=0.7pt] (0,-2) -- (1,-2);\draw[color=black, ->, shorten >=0.7pt] (-1,-1) -- (-2,-1);\draw[color=black, ->, shorten >=0.7pt] (0,-1) -- (-1,-1);\draw[color=black, ->, shorten >=0.7pt] (1,-1) -- (0,-1);\draw[color=black, ->, shorten >=0.7pt] (-2,0) -- (-1,0);\draw[color=black, ->, shorten >=0.7pt] (-1,0) -- (0,0);\draw[color=black, ->, shorten >=0.7pt] (0,0) -- (1,0);\draw[color=black,  ->, shorten >=0.7pt] (-1,1) -- (-2,1);\draw[color=black, ->, shorten >=0.7pt] (0,1) -- (-1,1);\draw[color=black, ->, shorten >=0.7pt] (1,1) -- (0,1);
\draw[color=black, ->, shorten >=0.7pt] (-2,-2) -- (-2,-1);

\draw[color=black, <-, shorten >=0.7pt] (-1,-2)  -- (-2,-2);

\draw[color=black, <-, shorten >=0.7pt] (-2,-1) -- (-2,0);
\draw[color=black, <-, shorten >=0.7pt] (-2,0) -- (-2,1);
\draw[color=black, <-, shorten >=0.7pt] (-1,-1) -- (-1,-2);\draw[color=black, <-, shorten >=0.7pt] (-1,0) -- (-1,-1);\draw[color=black, <-, shorten >=0.7pt] (-1,1) -- (-1,0);\draw[color=black, <-, shorten >=0.7pt] (0,-2) -- (0,-1);\draw[color=black, <-, shorten >=0.7pt] (0,-1) -- (0,0);\draw[color=black, <-, shorten >=0.7pt] (0,0) -- (0,1);
\draw[color=black, <-, shorten >=0.7pt] (1,-1) -- (1,-2);\draw[color=black, <-, shorten >=0.7pt] (1,0) -- (1,-1);\draw[color=black, <-, shorten >=0.7pt] (1,1) -- (1,0);

\draw[color=green,ultra thick] (-0.7,-1.3) -- (-1.3,-0.7);
\draw[color=green,ultra thick]  (-0.7,0.7) -- (-1.3,1.3);

\draw[color=green,ultra thick] (-1.7,-0.3) -- (-2.3,0.3);
\draw[color=green,ultra thick] (-2.3,0.7) -- (-1.7,1.3);
\draw[color=green,ultra thick] (-0.3,0.7) -- (0.3,1.3);
\draw[color=green,ultra thick] (1.3,0.7) -- (0.7,1.3);

\draw[color=green,ultra thick] (0.7,-0.3) -- (1.3,0.3);
\draw[color=green,ultra thick] (-0.3,-1.3) -- (0.3,-0.7);

\draw[color=blue] (-0.5,-1.5) -- (0.5,-0.5);

\draw[color=blue,dotted]  (0.5,-0.5) -- (-0.5,0.5);
\draw[color=blue,dotted]  (-0.5,0.5) -- (-1.5,-0.5);

\draw[color=blue]  (-1.5,-0.5) -- (-0.5,-1.5);

\draw[color=blue,dotted]  (-1.5,-2.5) --  (-2.5,-1.5);
\draw[color=blue,dotted]   (0.5,-2.5) -- (1.5,-1.5);

\draw[color=blue]   (1.5,0.5) -- (0.5,1.5);
\draw[color=blue]   (-1.5,1.5) -- (-2.5,0.5);

\draw[color=blue,dotted]  (-0.5, -1.5) -- (-1.5,-2.5);

\draw[color=blue]  (0.5, -0.5) -- (1.5,0.5);

\draw[color=blue]  (-0.5, 0.5) -- (0.5,1.5);

\draw[color=blue,dotted]  (-1.5, -0.5) -- (-2.5,-1.5);

\draw[color=blue,dotted]  (-0.5, -1.5) -- (0.5,-2.5);

\draw[color=blue,dotted]  (0.5, -0.5) -- (1.5,-1.5);

\draw[color=blue]  (-0.5, 0.5) -- (-1.5,1.5);

\draw[color=blue]  (-1.5, -0.5) -- (-2.5,0.5);

            \end{tikzpicture}
        }
        \caption{\small Manhattan lattice}   
        \label{fig:subfig9}
    \end{subfigure}
    \caption{Percolation on $\mathbb{L}$: dotted blue edges are open, solid blue edges are closed. Shown in green are the corresponding mirrors on the $F$-lattice (left) and Manhattan lattice.}
    \label{f.mirrors}
\end{figure}


Consider now the \emph{first glance mirror walk}: Starting at the origin $o$, it travels along a uniform random outgoing edge $\rho(o)$. On its first visit to each vertex $v \neq \Z^2 -\{o\}$, the walker behaves like a light ray: if there is a mirror at $v$ then the walker reflects by a right angle, and if there is no mirror then the walker continues straight. At this point $v$ is assigned the rotor $\rho(v) = (v,w)$ where $w$ is the vertex of $\Z^2$ visited immediately after $v$. On all subsequent visits to $v$, the walker follows the usual rules of rotor walk.

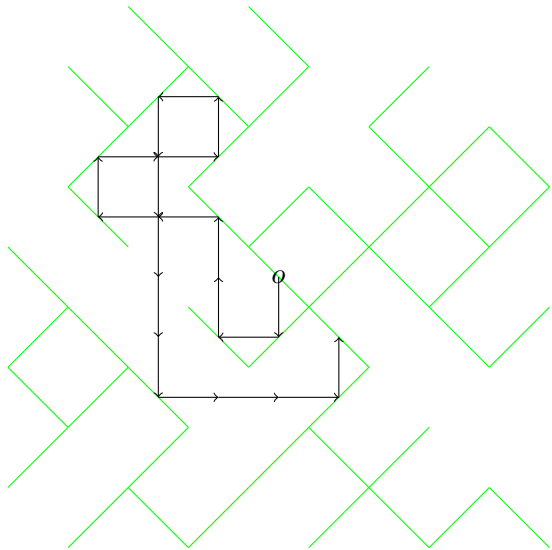
\begin{figure}[h!]
\centering

\begin{tikzpicture}
    [scale=0.8,only marks,
    reddot/.style={fill=red,circle,inner sep=1pt, minimum width=0.5pt},
    bluedot/.style={fill=blue,circle, inner sep=2pt, minimum width=0.5pt},
    blackdot/.style={fill=black,circle, inner sep=2pt, minimum width=0.5pt},
    yellowdot/.style={fill=yellow,circle, inner sep=2pt, minimum width=0.5pt},
    greendot/.style={fill=green,circle, inner sep=2pt, minimum width=0.5pt},    
     spy using outlines={rectangle,lens={scale=3}, size=6cm, connect spies},
]

\draw[color=green] (-3.5,-2.5) -- (-4.5,-3.5);
\draw[color=green] (-4.5,-1.5) -- (-3.5,-2.5);
\draw[color=green] (-3.5,-0.5) -- (-4.5,-1.5);
\draw[color=green] (-4.5,0.5) -- (-3.5,-0.5);
\draw[color=green] (-2.5,-3.5) -- (-3.5,-4.5);
\draw[color=green] (-2.5,-1.5) -- (-3.5,-2.5);
\draw[color=green] (-3.5,-0.5) -- (-2.5,-1.5);
\draw[color=green] (-3.5,1.5) -- (-2.5,0.5);
\draw[color=green] (-2.5,2.5) -- (-3.5,1.5);
\draw[color=green] (-3.5,3.5) -- (-2.5,2.5);
\draw[color=green] (-2.5,-3.5) -- (-1.5,-4.5);
\draw[color=green] (-1.5,-2.5) -- (-2.5,-3.5);
\draw[color=green] (-2.5,-1.5) -- (-1.5,-2.5);
\draw[color=green] (-1.5,3.5) -- (-2.5,2.5);
\draw[color=green] (-2.5,4.5) -- (-1.5,3.5);
\draw[color=green] (-0.5,-3.5) -- (-1.5,-4.5);
\draw[color=green] (-1.5,-0.5) -- (-0.5,-1.5);
\draw[color=green] (-1.5,1.5) -- (-0.5,0.5);
\draw[color=green] (-0.5,2.5) -- (-1.5,1.5);
\draw[color=green] (-1.5,3.5) -- (-0.5,2.5);
\draw[color=green] (0.5,-2.5) -- (-0.5,-3.5);
\draw[color=green] (0.5,-0.5) -- (-0.5,-1.5);
\draw[color=green] (-0.5,0.5) -- (0.5,-0.5);
\draw[color=green] (0.5,1.5) -- (-0.5,0.5);
\draw[color=green] (0.5,3.5) -- (-0.5,2.5);
\draw[color=green] (-0.5,4.5) -- (0.5,3.5);
\draw[color=green] (1.5,-3.5) -- (0.5,-4.5);
\draw[color=green] (0.5,-2.5) -- (1.5,-3.5);
\draw[color=green] (1.5,-1.5) -- (0.5,-2.5);
\draw[color=green] (0.5,-0.5) -- (1.5,-1.5);
\draw[color=green] (1.5,0.5) -- (0.5,-0.5);
\draw[color=green] (0.5,1.5) -- (1.5,0.5);
\draw[color=green] (1.5,-3.5) -- (2.5,-4.5);
\draw[color=green] (2.5,-2.5) -- (1.5,-3.5);
\draw[color=green] (1.5,0.5) -- (2.5,-0.5);
\draw[color=green] (2.5,1.5) -- (1.5,0.5);
\draw[color=green] (1.5,2.5) -- (2.5,1.5);
\draw[color=green] (2.5,3.5) -- (1.5,2.5);
\draw[color=green] (3.5,-3.5) -- (2.5,-4.5);
\draw[color=green] (2.5,-0.5) -- (3.5,-1.5);
\draw[color=green] (3.5,0.5) -- (2.5,-0.5);
\draw[color=green] (2.5,1.5) -- (3.5,0.5);
\draw[color=green] (3.5,2.5) -- (2.5,1.5);
\draw[color=green] (3.5,-3.5) -- (4.5,-4.5);
\draw[color=green] (4.5,-0.5) -- (3.5,-1.5);
\draw[color=green] (4.5,1.5) -- (3.5,0.5);
\draw[color=green] (3.5,2.5) -- (4.5,1.5);

\draw[color=black] (0,0) {} -- (0,-1) {}[->];\draw[color=black] (0,-1) {} -- (-1,-1) {}[->];\draw[color=black] (-1,-1) {} -- (-1,0) {}[->];\draw[color=black] (-1,0) {} -- (-1,1) {}[->];\draw[color=black] (-1,1) {} -- (-2,1) {}[->];\draw[color=black] (-2,1) {} -- (-3,1) {}[->];\draw[color=black] (-3,1) {} -- (-3,2) {}[->];\draw[color=black] (-3,2) {} -- (-2,2) {}[->];\draw[color=black] (-2,2) {} -- (-1,2) {}[->];\draw[color=black] (-1,2) {} -- (-1,3) {}[->];\draw[color=black] (-1,3) {} -- (-2,3) {}[->];\draw[color=black] (-2,3) {} -- (-2,2) {}[->];\draw[color=black] (-2,2) {} -- (-2,1) {}[->];\draw[color=black] (-2,1) {} -- (-2,0) {}[->];\draw[color=black] (-2,0) {} -- (-2,-1) {}[->];\draw[color=black] (-2,-1) {} -- (-2,-2) {}[->];\draw[color=black] (-2,-2) {} -- (-1,-2) {}[->];\draw[color=black] (-1,-2) {} -- (0,-2) {}[->];\draw[color=black] (0,-2) {} -- (1,-2) {}[->];  \draw[color=black] (1,-2) {} -- (1,-1) {}[->]; 
\node at (0,0)[black]{$\small o$};

\end{tikzpicture}
\caption{Mirror walk on the Manhattan lattice.}

\end{figure}

\begin{lemma}
With the mirror assignments described above, uniform rotor walk on the Manhattan lattice or the $F$-lattice has the same law as the first glance mirror walk.
\end{lemma}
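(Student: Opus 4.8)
The plan is to show that both processes are one and the same randomized algorithm, driven by i.i.d.\ fair coins revealed one per vertex as the walk discovers new sites. Since each lattice has outdegree $2$, a rotor walk is completely determined by its starting point together with, for each vertex $v$, the \emph{first} edge it exits along from $v$: all later exits from $v$ simply alternate between the two outgoing edges, because $m(v)$ is the transposition of $E_v$. Under the retrospective convention the first exit from $v$ is $m(v)(\rho(v))$, which is uniform on $E_v$ since $\rho(v)$ is, and these first exits are independent across $v$. So it suffices to prove that the first-glance mirror walk also leaves each newly discovered vertex along a uniformly random one of its two outgoing edges, independently across vertices.

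The heart of the argument is a local consistency check at a single vertex $v$. First I would record the two incoming and two outgoing edges at $v$, dictated by the periodic orientation (for the $F$-lattice the outgoing pair is vertical or horizontal according to the parity of $v$; for the Manhattan lattice it is one horizontal and one vertical edge fixed by the parities of the row and column of $v$), together with the diagonal carrying $e_v$, which is $/$ or $\backslash$ according to the parity of the coordinate sum of $v$. The claim to verify is that, for each of the two possible incoming directions at $v$, the two states of $e_v$ send the light ray out along the two \emph{distinct legal} outgoing edges of $v$: on the $F$-lattice the ray meets a $/$ or $\backslash$ mirror and turns by a right angle, while on the Manhattan lattice it either meets a mirror (and turns) or sees no mirror (and continues straight). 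In each lattice one checks, over the finitely many residues that fix the local picture, that the right-angle turn or the straight step always lands on an outgoing edge and that the two states of $e_v$ realize the two \emph{different} outgoing edges. Thus $e_v \mapsto (\text{first-glance exit from } v)$ is a bijection from $\{\text{open},\text{closed}\}$ onto $E_v$. I expect this to be the main obstacle: it is precisely here that the orientations of the $F$- and Manhattan lattices must be matched to the reflection rule so that the walker never tries to leave along a non-edge, and the bookkeeping of parities and reflection signs is the delicate part.

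Granting the bijection, uniformity and independence follow from percolation. The assignment $v \mapsto e_v$ is a bijection from $\mathbb{Z}^2$ onto the edge set of $\mathbb{L}$, since each edge of $\mathbb{L}$ has its midpoint in $\mathbb{Z}^2$ and distinct $v$ give distinct $e_v$; hence the states $\{e_v\}_v$ are i.i.d.\ fair coins. I would then run an exploration, revealing $e_v$ only at the instant the walk first reaches $v$. At that moment the past trajectory---and in particular the direction along which the walk enters $v$---is a deterministic function of $\rho(o)$ and of the edges $e_u$ at the previously visited $u \neq v$, hence independent of $e_v$. Conditioning on this past, $e_v$ is still a fair coin, so by the bijection above the first-glance exit from $v$ is uniform on $E_v$ and independent of the past. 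Inducting over the discovery order shows that the first exits of the mirror walk are i.i.d.\ uniform on the respective $E_v$, matching the uniform rotor walk exactly; the first step out of $o$ is uniform by definition and $e_o$ is never consulted.

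Finally, both walks are produced by the identical deterministic update---alternate the rotor on a revisit, and follow the recorded first exit on a first visit---fed by first-exit choices that are i.i.d.\ uniform on $E_v$ and revealed in the order the sites are discovered. An adaptive process of this form has a law depending only on the update rule and on the common distribution of the fresh choices, so the mirror walk and the uniform rotor walk have the same law, as claimed.
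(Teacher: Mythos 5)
Your proposal is correct and takes essentially the same route as the paper: the paper's (much terser) proof rests on exactly your two points, namely that the mirror placements force the first-glance walk along directed edges of the lattice, and that conditionally on the past the exit at a first visit to $v$ is uniform on $E_v$ because all previously revealed rotors are independent of the status of $e_v$ while flipping that status flips $\rho(v)$ --- your bijection $e_v \mapsto \rho(v)$. You simply make explicit the reduction to first exits (using outdegree $2$ and the alternating mechanism) and the exploration/induction over the discovery order, both of which the paper leaves implicit.
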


\begin{proof}
The mirror placements are such that the first glance mirror walk must follow a directed edge of the corresponding lattice.
The rotor $\rho(v)$ assigned by the first glance mirror walk when it first visits $v$ is uniform on the outgoing edges from $v$; this remains true even if we condition on the past, because all previously assigned rotors are independent of the status of the edge $e_v$ (open or closed), and changing the status of $e_v$ changes $\rho(v)$.
\end{proof}

\old{
The Stochastic pin-ball model introduced by Grimmett in \cite{perc} is as follows: we will place at each vertex $x$ of the square lattice a \emph{mirror} with probability $p$, independently of other vertices. If a vertex $x$ has a mirror attached to it, then it will be a \emph{north-west (NW) mirror} with probability $\frac{1}{2}$ and a \emph{north-east (NE) mirror} otherwise. Then we start a particle from the origin in a given direction and whenever it reaches a mirror it is deflected through a right angle in the appropriate direction. 
\\

There is a simple connection to bond-percolation, as follows: let $\mathbb{L}$ be the diagonal lattice with vertex set $\{(x+\frac{1}{2},y+\frac{1}{2}) \text{ for } x,y \in \mathbb {Z} \text{ such that } (x+y) \, \% \,2 =0\}$ and designate an edge joining $(x-\frac{1}{2},y-\frac{1}{2})$ to $(x+\frac{1}{2},y+\frac{1}{2})$ to be open if the vertex $(x,y)$ is a NE mirror. Similarly, an edge joining $(x-\frac{1}{2},y+\frac{1}{2})$ to $(x+\frac{1}{2},y-\frac{1}{2})$ is open if $(x,y)$ is a NW mirror.  
\\

We can connect the stochastic pin ball model to rotor walk on the F-lattice as follows: denote each site to be a $NE$ mirror if its outgoing rotors point E and W and its ingoing rotors point N and S. Similarly, a vertex will be a $NW$ mirror if its outgoing rotors point N and S and its ingoing rotors point E and W. We argue now that the trajectory particle will follow the mirror pattern on the F-lattice.  Suppose the particle arrives at a NE mirror through the ingoing rotor point N. Suppose it then turns right, as it would reflected by the mirror. Then the particle will never return to the site through the same edge during the course of an excursion, and so it respects the mirror arrangement. A similar argument goes for a NW mirror.
\\

Similarly, the connection to percolation for the Manhattan lattice works now as follows: at every site we will have a mirror with probability $1/2$: specifically a NE mirror $/$ if it is an odd site ($(x+y) \%2 = 1$) and a NW mirror $\ $ if it is an even site ($(x+y) \% 2=0$), following the setup of the Manhattan lattice mentioned above. Then, rotor walk on this lattice works similarly to the F-lattice.

We now define some useful concepts for analyzing rotor walk on directed lattices and recall some properties of random walk.
}

Write $\beta_e = 1 \{e \text{ is open}\}$. Given the random variables $\beta_e \in \{0,1\}$ indexed by the edges of $\mathbb{L}$, we have described how to set up mirrors and run a rotor walk, using the mirrors to reveal the initial rotors as needed.  The next lemma holds pointwise in $\beta$.

\begin{lemma}
\label{l.cycle}
If there is a cycle of closed edges in $\mathbb{L}$ surrounding $o$, then rotor walk started at $o$ returns to $o$ at least twice before visiting any vertex outside the cycle.
\end{lemma}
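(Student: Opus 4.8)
The plan is to work with the first-glance mirror walk, which by the preceding lemma has the same law as the uniform rotor walk, and to show that the cycle of closed edges acts as a barrier confining the walk to the finite region $\Omega$ of vertices lying on or inside the cycle until it has completed one full excursion from $o$. Since every vertex of the $F$-lattice and the Manhattan lattice has out-degree $2$, the proof of Lemma~\ref{l.excursion1} tells us that the first excursion ends at the time $T(1)$ when $o$ has been visited $\deg(o)+1=3$ times --- that is, after exactly two returns to $o$ --- and that no directed edge is traversed twice before time $T(1)$. So it suffices to prove the walk never leaves $\Omega$ strictly before $T(1)$.

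First I would nail down the local geometry at a barrier vertex. Each closed edge $e_{v_j}$ of the cycle carries a mirror parallel to it, a diagonal segment through the lattice point $v_j$ sitting at the midpoint of $e_{v_j}$; the cycle, as a closed curve, is the union of these segments, and a short check shows that an edge of $\Z^2$ can meet the curve only at one of the midpoints $v_j$. Writing out the $90^\circ$ reflection for each of the two diagonal orientations of the mirror, I would verify the key fact that the mirror at $v_j$ sends an incoming ray to an outgoing ray lying on the same side of the line through $e_{v_j}$. Because $\Omega$ occupies one side of this line locally, this says precisely: on its \emph{first} visit to a barrier vertex, a walk arriving from inside $\Omega$ departs back into $\Omega$.

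The hard part is that the mirror rule controls only first visits, whereas on a later visit the walk obeys the rotor, and the rotor's second position at a barrier vertex points \emph{out} of $\Omega$; so the real content is to forbid a second visit to any barrier vertex before time $T(1)$. I would argue by contradiction: let $\tau\le T(1)$ be the first time the walk is strictly outside $\Omega$. Then $X_{\tau-1}\in\Omega$ and the step $X_{\tau-1}\to X_\tau$ crosses the cycle, forcing $X_{\tau-1}$ to be a barrier vertex $v_j$ exited to the outside. By the geometric fact this is not the first visit to $v_j$, so $v_j$ was visited at an earlier time; since every position before $\tau$ lies in $\Omega$, both arrivals at $v_j$ use its unique incoming edge pointing into $\Omega$ (a barrier vertex has in-degree $2$, one incoming edge on each side of the diagonal). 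That directed edge is then traversed twice before $T(1)$, contradicting the no-reuse property.

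It remains to note $T(1)<\infty$: were the walk to stay in $\Omega$ forever it would traverse infinitely many steps in a finite region without ever reusing a directed edge, which is impossible. Hence $T(1)<\infty$, the walk stays in $\Omega$ up to time $T(1)$, and the two returns to $o$ occur without any vertex outside the cycle being visited. The only routine points left are the reflection bookkeeping for both mirror orientations and the verification that an edge of $\Z^2$ crosses the cycle only at a midpoint $v_j$; I expect the genuine difficulty to be concentrated in the no-reuse argument of the third paragraph, which is what reconciles the first-visit mirror dynamics with the later rotor dynamics.
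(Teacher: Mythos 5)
Your overall route is the same as the paper's: run the first-glance mirror walk, use the reflection rule to control first visits to barrier vertices, use the period-two rotor to force an outside exit to occur only on a later visit, conclude that some directed edge is traversed twice, and invoke the proof of Lemma~\ref{l.excursion1} to place the first repetition at $o$ at time $T(1)$, i.e.\ after two returns. Those ingredients are all correct. But there is a genuine gap in your third paragraph, at the inference ``since every position before $\tau$ lies in $\Omega$, both arrivals at $v_j$ use its unique incoming edge pointing into $\Omega$.'' A vertex of $\Omega$ can feed the \emph{outside} incoming edge of $v_j$: if the cycle has a reflex corner --- two consecutive closed edges $e_{v_j}$ and $e_{u'}$ of $\mathbb{L}$ meeting at a right angle whose quarter-plane wedge lies in the exterior --- then the barrier vertices $v_j$ and $u'$ are adjacent in $\Z^2$, and the unit edge joining them has its interior in the exterior of the cycle while \emph{both endpoints belong to} $\Omega$. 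For one vertex of such a pair, the connecting edge is precisely the incoming edge on the outside of its mirror, sourced at a vertex of $\Omega$. Your positional stopping time $\tau$ cannot see this: the walk can traverse such ``sneak'' edges without ever standing strictly outside $\Omega$. Once that is possible, two of your claims break at once: the first visit to $v_j$ may arrive along the outside incoming edge and be reflected straight out of the cycle (so ``this is not the first visit'' fails), and the arrivals at $v_j$ before $\tau$ may use the two \emph{different} incoming edges (so no directed edge is repeated and your contradiction evaporates).

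The repair is to make the dichotomy edge-wise rather than vertex-wise: call an edge of $\Z^2$ \emph{exterior} if it lies on the outside of the mirror at the barrier vertex it meets (equivalently, its interior lies in the exterior of the cycle), and let $t^*$ be the first time the walk traverses an exterior edge. Before $t^*$ every arrival at the barrier vertex $x = X_{t^*}$ must use its unique interior incoming edge, since the two incoming edges lie on opposite sides of the mirror and the exterior one is itself an exterior edge. On the first visit the mirror reflects the walk back to the interior side, and since the rotor alternates between the two outgoing edges, the exterior exit at time $t^*$ can only occur on a second or later visit; hence the interior incoming edge at $x$ is traversed twice strictly before $t^*$. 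By the proof of Lemma~\ref{l.excursion1}, the first repeated directed edge is an outgoing edge of $o$ reused exactly at time $T(1)$, so $T(1) < t^*$ and the walk returns to $o$ twice before traversing any exterior edge, in particular before visiting any vertex outside the cycle. This is in substance what the paper does with its first exterior vertex $w$ and the minimality of $w$; your reformulation through the vertex set $\Omega$ is exactly where the reflex-corner case slips through, and it is worth noting that the no-reuse tool you already isolated is the right one --- you only applied it at the wrong stopping time.
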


\begin{proof}
Denote by $C$ the set of vertices $v$ such that $e_v$ lies on the cycle by $C$, and by $A$ the set of vertices enclosed by the cycle.
Let $w$ be the first vertex not in $A \cup C$ visited by the rotor walk.
Since the cycle surrounds $o$, the walker must arrive at $w$ along an edge $(v,w)$ where $v \in C$.  Since $e_v$ is closed, the walker reflects off the mirror $e_v$ the first time it visits $v$, so only on the second visit to $v$ does it use the outgoing edge $(v,w)$.  Moreover, the two incoming edges to $v$ are on opposite sides of the mirror.  Therefore by minimality of $w$, the walker must use the same incoming edge $(u,v)$ twice before visiting $w$. The first edge to be used twice is incident to the origin by Lemma~\ref{l.excursion1}, so the walk must return to the origin twice before visiting $w$. 
\end{proof}

Now we use a well-known theorem about critical bond percolation: there are infinitely many disjoint cycles of closed edges surrounding the origin. Together with Lemma~\ref{l.cycle} this completes the proof that the uniform rotor walk is recurrent both on the Manhattan lattice and the $F$-lattice.

To make a quantitative statement, consider the probability of finding a closed cycle within a given annulus. The following result is a consequence of the Russo-Seymour-Welsh estimate and FKG inequality.

\begin{thm} 
\cite[11.72]{perc}
\label{t.layer}
Let $S_\ell = [-\ell,\ell]\times [-\ell,\ell]$. Then for all $\ell \geq 1$,
	\[ P(\text{\em there exists a cycle of closed edges surrounding the origin in } S_{3\ell} - S_\ell) > p \]
for a constant $p$ that does not depend on $\ell$.
\end{thm}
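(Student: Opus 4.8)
The plan is to build the required closed cycle out of four long-way crossings of rectangles and to bound each crossing from below using the Russo--Seymour--Welsh (RSW) estimate, then combine the four events with FKG. Concretely, I would cover the annulus $S_{3\ell} - S_\ell$ by four overlapping rectangles: a top rectangle $[-3\ell,3\ell]\times[\ell,3\ell]$, a bottom rectangle $[-3\ell,3\ell]\times[-3\ell,-\ell]$, a left rectangle $[-3\ell,-\ell]\times[-3\ell,3\ell]$, and a right rectangle $[\ell,3\ell]\times[-3\ell,3\ell]$. Each of these has aspect ratio $3:1$, and I want a crossing of closed edges the \emph{long} way in each one: horizontal crossings of the top and bottom rectangles, vertical crossings of the left and right. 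Consecutive rectangles overlap in the corner squares of the annulus, so wherever two such long-way crossings both occur they must intersect; hence the union of the four crossings contains a closed circuit that separates the origin from infinity and lies inside $S_{3\ell}-S_\ell$.

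For the lower bound on a single crossing, I would invoke RSW. Because we are at the critical parameter $p=\frac12$ for bond percolation on the (rotated) square lattice $\mathbb{L}$, self-duality gives that a closed crossing of a square the easy way has probability exactly $\frac12$, and RSW upgrades this to a bound, \emph{uniform in $\ell$}, on the probability of a closed long-way crossing of a rectangle of fixed aspect ratio. Thus each of the four crossing events above has probability at least some constant $q>0$ that does not depend on $\ell$.

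Finally, I would combine the four events using the FKG inequality. Each ``closed long-way crossing'' event is increasing in the configuration of closed edges (equivalently, decreasing in the open configuration), so FKG yields
	\[ P(\text{all four crossings occur}) \;\ge\; q^4 \;=:\; p \;>\; 0, \]
and on this event a surrounding cycle of closed edges exists in $S_{3\ell}-S_\ell$, as required. The gluing step and the FKG step are routine; the genuine input---and the only real obstacle---is the RSW estimate itself, whose proof is delicate but which is a standard theorem for critical planar bond percolation. Since the statement is quoted verbatim from \cite{perc}, in the paper I would simply cite that source rather than reprove RSW.
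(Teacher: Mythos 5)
Your proposal is correct and is exactly the standard argument behind the cited result: RSW at criticality gives a uniform lower bound on closed long-way crossings of $3{:}1$ rectangles, planar gluing in the corner squares produces the surrounding circuit, and FKG (applied to four decreasing events) multiplies the bounds. The paper itself offers no proof, quoting the statement directly from \cite[11.72]{perc}, whose proof proceeds just as you describe, so there is nothing to flag.
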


Let $u_t(o)$ be the number of visits to $o$ by the first $t$ steps of uniform rotor walk in the Manhattan or $F$-lattice.

\begin{thm}
For any $a>0$ there exists $c>0$ such that
	\[ P(u_t(o) < c \log t) < t^{-a}. \]
\end{thm}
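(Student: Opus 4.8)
The plan is to lower bound the number of returns $u_t(o)$ by the number of disjoint closed circuits surrounding $o$ that the walk is forced to cross, and to use the range bound of Theorem~\ref{t.lowerbound} to argue that if $u_t(o)$ were small the walk would have to travel far and hence cross many such circuits. The final estimate then reduces to a standard RSW/Chernoff lower bound on the number of closed circuits in a box, which is very unlikely to be small. I will use throughout that on these lattices every vertex has degree $4$ and there is a single rotor mechanism, so the law is specified by the i.i.d.\ uniform rotors, equivalently by the critical percolation configuration $\beta$ on $\mathbb{L}$.

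First I would record the range-to-radius step. By Theorem~\ref{t.lowerbound}(ii), $u_t(x)\le u_t(o)+\deg(x)$, and since $\sum_x u_t(x)=t$ and $\deg(x)=4$ everywhere, we get $\#R_t\ge t/(u_t(o)+4)$. Since $R_t\subseteq S_\ell$ forces $\#R_t\le(2\ell+1)^2$, on the event $\{u_t(o)<c\log t\}$ the walk must reach sup-norm distance at least $\ell_*=\tfrac12\sqrt{t/(c\log t+4)}\asymp\sqrt{t/\log t}$; in particular it exits every box $S_\ell$ with $\ell<\ell_*$.

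The heart of the argument is a deterministic claim that crossing closed circuits is additive in returns: if $\gamma_1\subset\gamma_2\subset\cdots\subset\gamma_N$ are disjoint closed circuits of $\mathbb{L}$ surrounding $o$, and the walk reaches a vertex outside $\gamma_N$, then it returns to $o$ at least $N$ times (indeed it should complete at least one excursion per circuit). This is exactly where Lemma~\ref{l.cycle} must be upgraded, and it is the main obstacle. Lemma~\ref{l.cycle} applied to the outermost circuit alone yields only the two returns of a single excursion, and applying it separately to each $\gamma_k$ gives overlapping return counts rather than their sum. To make the counts add I would run the excursion bookkeeping of Section~\ref{s.excursion} scale by scale: by the lock-in corollary following Corollary~\ref{c.balls}, the rotors inside $\gamma_k$ are frozen once the walk has filled that region, so the reflecting boundary created by a closed $\gamma_{k+1}$ cannot be crossed until the walk completes a fresh excursion, forcing a new return to $o$ between the first exit of the region enclosed by $\gamma_k$ and the first exit of the region enclosed by $\gamma_{k+1}$. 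Granting this, on $\{u_t(o)<c\log t\}$ the walk crosses every closed circuit surrounding $o$ inside $S_{\ell_*}$, so $u_t(o)\ge N_{\ell_*}$, the number of disjoint closed circuits surrounding $o$ in $S_{\ell_*}$.

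It then remains to show $N_{\ell_*}$ is rarely small. Partition $S_{\ell_*}$ into the disjoint annuli $S_{3^{k+1}}\setminus S_{3^k}$, $0\le k<K$, with $K\asymp\log_3\ell_*\asymp\log t$. By Theorem~\ref{t.layer} each annulus contains a closed circuit surrounding $o$ with probability at least $p$, and since distinct annuli use disjoint edges of $\mathbb{L}$ these events are independent; each such circuit is one of the $\gamma_k$, so $N_{\ell_*}$ stochastically dominates a $\mathrm{Binomial}(K,p)$ variable. A Chernoff bound gives $\mathbb{P}(N_{\ell_*}<c\log t)\le e^{-\Theta(\log t)}=t^{-a}$ for $c$ small enough, whence $\mathbb{P}(u_t(o)<c\log t)\le t^{-a}$. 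To push the exponent up toward an arbitrary $a$ I would take $c$ small and thicken the annuli to raise the per-annulus probability (a circuit in any ratio-$3$ sub-annulus already suffices, so the chance of no circuit in a ratio-$M$ annulus is at most $(1-p)^{\lfloor\log_3 M\rfloor}$); tuning $M$ and $c$ so that the Chernoff exponent exceeds $a\log t$ is the delicate quantitative point, and it is the secondary obstacle after the additivity claim, since the circuit count has mean only of order $\log t$ and one must rule out its lower tail sharply.
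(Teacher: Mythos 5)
Your skeleton is the same as the paper's: closed circuits in ratio-$3$ annuli via Theorem~\ref{t.layer}, independence across annuli because they use disjoint edges of $\mathbb{L}$, a deterministic claim that each circuit crossed costs fresh returns to $o$, and a binomial lower-tail estimate. The paper's proof is two sentences and leaves implicit exactly the two points you flag, so the real question is whether your proposed fillings work. On the main obstacle (additivity of returns), your instinct is right but your tool is wrong: the lock-in corollary (rotors on $A_n$ frozen after excursion $n$) concerns rotors \emph{inside} the already-visited region and says nothing about the mirrors on $\gamma_{k+1}$. The correct repair uses only ingredients already in \textsection\ref{s.excursion} and the proof of Lemma~\ref{l.cycle}: (a) within a single excursion each directed edge is traversed at most once (by Lemma~\ref{l.excursion3}(i) the tail vertex is visited at most $\deg=2$ times, and its rotor alternates between its two outgoing edges); and (b) the mirror argument inside Lemma~\ref{l.cycle} shows the first crossing of $\gamma_{k+1}$ is preceded by a double traversal of some incoming edge $(u,v)$ with $v$ a mirror vertex of $\gamma_{k+1}$. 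Since the circuits lie in disjoint annuli, $v$ is strictly outside $\gamma_k$, so both traversals of $(u,v)$ occur after the first crossing of $\gamma_k$; by (a) they lie in distinct excursions, so a full excursion---hence $\deg(o)=2$ fresh returns, slightly better than the one return per circuit you claim---completes strictly between the first crossings of $\gamma_k$ and $\gamma_{k+1}$. Induction then gives $u_t(o)\ge 2N$ on the event that the walk escapes all $N$ circuits, which is precisely the inclusion $\{u_t(o)<k\}\subset\{\text{at most }k/2\text{ annuli contain circuits}\}$ that the paper asserts. Your escape step via Theorem~\ref{t.lowerbound}(ii) is correct and is a legitimate (more explicit) substitute for what the paper again leaves unsaid: one must rule out the walk never crossing some circuit, which the paper's restriction to $\frac{1}{10}\log t$ annuli handles because a walk confined inside an uncrossed circuit of radius $t^{0.11}$ completes at least order $t^{0.7}$ excursions (each excursion in a region of $M$ vertices has length at most $2M$), forcing far more than $c\log t$ returns; your radius $\ell_*\asymp\sqrt{t/\log t}$ accomplishes the same and even makes more annuli available.

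Your ``secondary obstacle'' is a sharper observation than you present it as, and you should not expect tuning to resolve it: thickening annuli to ratio $M$ raises the per-annulus probability to $1-(1-p)^{\lfloor\log_3 M\rfloor}$ but divides the number of annuli by $\log_3 M$, so the total exponent is capped by (number of ratio-$3$ scales inside $S_{\ell_*}$)$\,\times\log\frac{1}{1-p}$, which is at most a fixed constant times $\log t$; indeed $\mathbb{P}(N_{\ell_*}=0)\ge(1-p')^{c'\log t}$ is itself a fixed power of $t$, so no choice of $M$ and $c$ makes the circuit-counting bound beat $t^{-a_0}$ for a constant $a_0$ determined by the RSW constant $p$. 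Note, however, that the paper's own proof---a binomial with $\frac{1}{10}\log t$ trials and fixed success probability $p$---is subject to exactly the same cap, since already $\mathbb{P}(\mathrm{Bin}(\frac{1}{10}\log t,\,p)=0)=t^{-\frac{1}{10}\log\frac{1}{1-p}}$; the ``for any $a$'' is not actually delivered by this method for large $a$ without further input. So once you repair the additivity step as above, your argument proves the same statement, to the same quantitative strength, as the paper's.
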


\begin{proof}
By Lemma~\ref{l.cycle}, the event $\{u_t(o)< k \}$ is contained in the event that at most $k/2$ of the annuli $S_{3^j} - S_{3^{j-1}}$ for $j=1,\ldots, \frac{1}{10} \log t$ contain a cyle of closed edges surrounding the origin. Taking $k=c \log t$ for sufficiently small $c$, this event has probability at most 
$t^{-a}$ by Theorem~\ref{t.layer}.
\end{proof}

\begin{figure}[h!]
\captionsetup{width=0.8\textwidth}

\centering
\begin{subfigure}{.5\textwidth}
  \centering
  \includegraphics[width=1\linewidth]{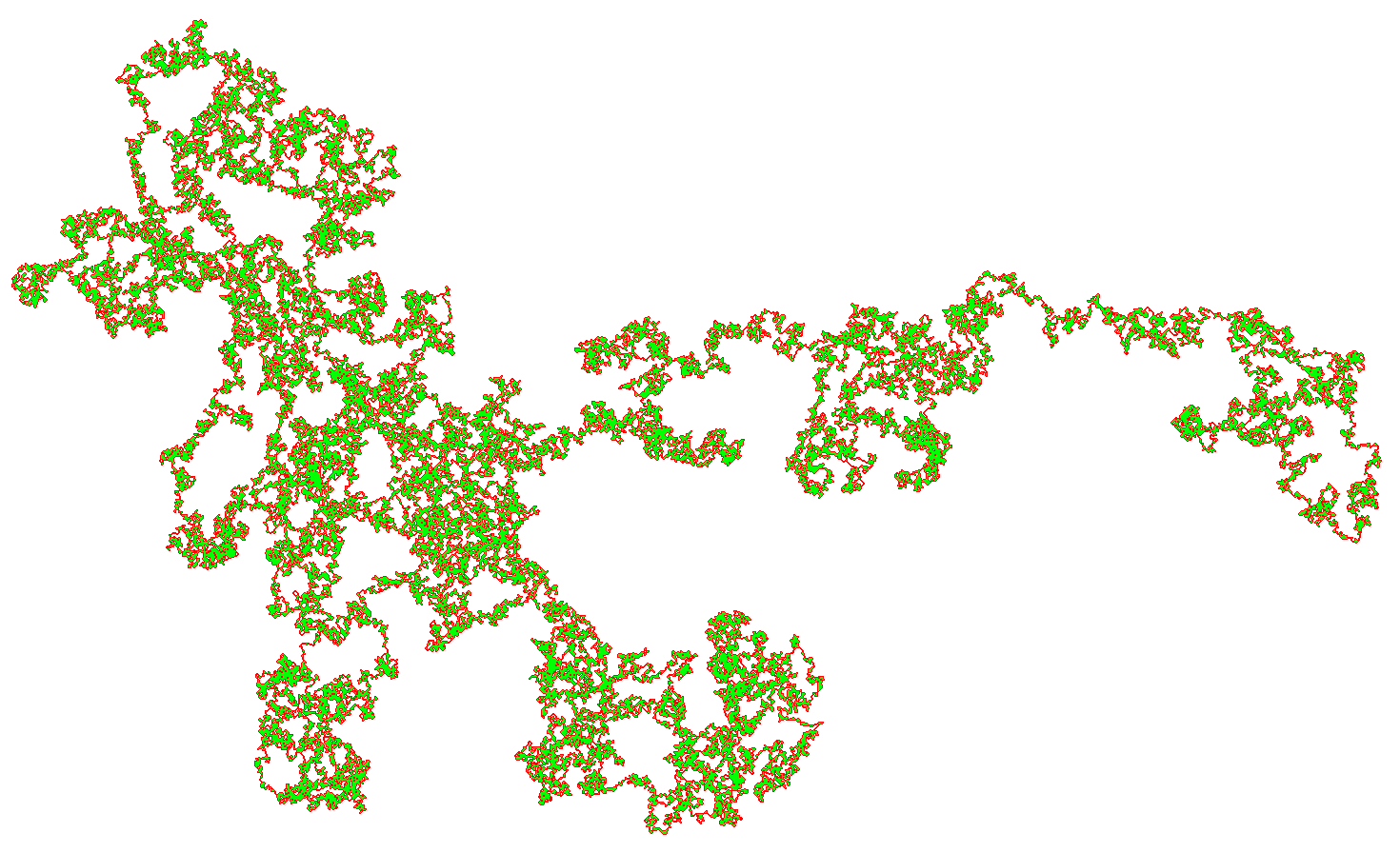}
  \label{fig:sub1}
\end{subfigure}%
\begin{subfigure}{.4\textwidth}
  \centering
  \includegraphics[width=1\linewidth]{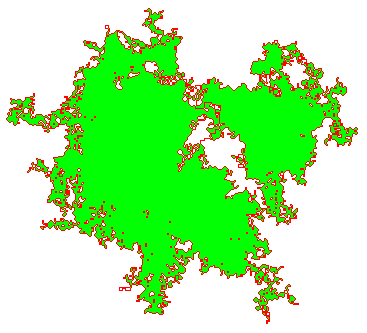}
  \label{fig:sub2}
\end{subfigure}
\caption{Set of sites visited by uniform rotor walk after 250000 steps on the $F$-lattice and the Manhattan lattice (right). Green represents at least two visits to the vertex and red one visit.}
\label{fig:test}
\label{f.manhattan}
\end{figure}

Although we used the same technique to show that the uniform rotor walk on these two lattices is recurrent, experiments suggest that behavior of the two walks is rather different: the number of distinct sites visited in $t$ steps appears to be of order $t^{2/3}$ on the Manhattan lattice but of order $t$ for $F$-lattice. This difference is clearly visible in Figure~\ref{f.manhattan}.

\section{Time for rotor walk to cover a finite Eulerian graph}
\label{s.cover}

Let $(X_t)_{t \geq 0}$ be a rotor walk on a finite connected Eulerian directed graph $G=(V,E)$. The \emph{vertex cover time} is defined by
	\[ t_{vertex} = \min \{t \,:\, \{X_s\}_{s=1}^t = V \}. \]
The \emph{edge cover time} is defined by
	\[ t_{edge} = \min \{t \,:\, \{(X_s,X_{s+1})\}_{s=0}^t = E \}. \]

Yanovski, Wagner and Bruckstein \cite{patrol} show $t_{edge} \leq 2D\#E$ for any Eulerian directed graph. Our next result improves this bound slightly, replacing $2D$ by $D+1$.

\begin{thm}
\label{t.cover}
For rotor walk on a finite Eulerian graph $G$ of diameter $D$, with any rotor mechanism $m$ and any initial rotor configuration $\rho$,
	\[ t_{vertex} \leq D \#E \]
and
	\[ t_{edge} \leq (D+1) \#E. \]
\old{
Moreover, if $G$ is undirected then there exist a rotor mechanism $m$ and initial rotor configuration $\rho$ such that
	\begin{equation} \label{e.lowercover}  t_{vertex}(m,\rho) > \frac12 \floor{\frac{D-1}{2}} \# E. \end{equation}
}
\end{thm}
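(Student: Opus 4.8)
The plan is to bound the cover times by relating the number of steps a rotor walk takes to the excursion decomposition developed in Section~\ref{s.excursion}, and then exploiting the diameter to control how many excursions are needed. First I would observe that because $G$ is finite and Eulerian, every $T(n)<\infty$ by Lemma~\ref{l.excursion1}, so the excursion machinery applies without caveat. The central tool is Corollary~\ref{c.balls}, which tells us that after $n$ completed excursions from a basepoint $o$, the set $A_n$ of sites visited during the $n$th excursion contains the ball $B(o,n)$. Since $G$ has diameter $D$, we have $B(o,D)=V$, so once the walk has completed $D$ excursions from $o=X_0$, every vertex of $V$ has been visited. Thus $t_{vertex}\le T(D)$, and it remains to bound $T(D)$ from above by $D\#E$.

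The key quantitative step is to show that the first $D$ excursions together take at most $D\#E$ steps. By Lemma~\ref{l.excursion3}(i), during any single excursion each vertex $x$ is visited at most $\deg(x)$ times, and since each visit to $x$ uses exactly one outgoing edge, each excursion has length at most $\sum_{x\in V}\deg(x)=\#E$ (using that $G$ is Eulerian so the total out-degree equals $\#E$). Summing over the first $D$ excursions gives $T(D)\le D\#E$, which yields the vertex bound. I would present this length-per-excursion estimate carefully, noting that it is exactly the same counting that underlies Theorem~\ref{t.lowerbound}(i), only now used as an upper bound on a single excursion rather than a lower bound on the cumulative length.

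For the edge bound I would argue that after the walk has visited every vertex — which happens by time $T(D)$ — it suffices to run one more excursion to guarantee that every edge has been traversed. Concretely, consider the rotor configuration at time $T(D)$: every vertex is now in $A_D$, so by Lemma~\ref{l.excursion3}(ii) the $(D+1)$st excursion visits each $x$ exactly $\deg(x)$ times and hence traverses every outgoing edge of $G$. Therefore $t_{edge}\le T(D+1)\le (D+1)\#E$ by the same per-excursion length bound.

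I expect the main obstacle to be technical rather than conceptual: making sure the indexing of excursions and the time bounds align so that ``completing $D$ excursions covers all vertices'' and ``completing $D+1$ excursions covers all edges'' are stated without off-by-one errors, and verifying that the edge-covering claim genuinely needs a full extra excursion (rather than merely the remainder of the $D$th). One should check that every edge $(x,y)$ is indeed used during an excursion in which $x\in A_{D}$, using that each excursion traverses each outgoing edge of each already-visited vertex exactly once; the Eulerian hypothesis is what guarantees the per-excursion visit counts match degrees cleanly, so I would flag explicitly where that hypothesis is invoked.
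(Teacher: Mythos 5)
Your proposal is correct and takes essentially the same route as the paper's proof: both deduce $A_D=V$ from Corollary~\ref{c.balls} (so $t_{vertex}\le T(D)$), use Lemma~\ref{l.excursion3}(ii) to conclude $e_{D+1}=\deg$ and hence $t_{edge}\le T(D+1)$, and bound $T(n)\le n\#E$ by a per-excursion count via Lemma~\ref{l.excursion1}. The only cosmetic difference is that you count at most $\deg(x)$ visits per vertex per excursion where the paper counts each edge used at most once per excursion --- the same estimate.
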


\begin{proof}
Consider the time $T(n)$ for rotor walk to complete $n$ excursions from $o$.
If $G$ has diameter $D$ then $A_{D} = V$ by Corollary~\ref{c.balls}, and $e_{D+1} = \deg$ by Lemma~\ref{l.excursion3}. It follows that 
	$ t_{vertex} \leq T(D) $
and
	$ t_{edge} \leq T(D+1). $
By Lemma~\ref{l.excursion1}, each edge is used at most once per excursion so $T(n) \leq n \# E$ for all $n \geq 0$.
\old{
For the lower bound we first make the observation that for any undirected graph of diameter $D$ there is a vertex $x_0$ and a connected set of vertices $S_1$ such that at least $\frac12 \# E$ edges of $G$ are incident to $S_1$, and $d(x_0,S_1) \geq \floor{(D+1)/2}$. Namely, let $x_0$ and $x_1$ be vertices of distance $D$, and for $i=0,1$ let 
	\[ S_i = \{v \in V \,:\, d(v,x_i) \leq d(v,x_{1-i}) \}. \]
Then each edge of $G$ is incident either to $S_0$ or $S_1$.
Without loss of generality, at least $\frac12 \#E$ edges are incident to $S_1$. 

Now with this choice of $S_1$ consider the increasing sequence of sets 
	\[ S_{n+1} = S_n \cup \partial S_n  \]
for $n \geq 1$. 
Write $E_n$ for the set of directed edges incident to $S_n$.  
By Theorem~\ref{t.AH} there exists a rotor mechanism $m$ and an initial rotor configuration $\rho$ such that the $n$th excursion of rotor walk started at $x_1$ traverses every directed edge in $E_n$ exactly once, and no other edges.  Since $x_0$ is not adjacent to $S_{\floor{(D-1)/2}}$, the vertex cover time is strictly larger than the total length of the first $\floor{(D-1)/2}$ excursions:
	\[ t_{vertex}(m,\rho) > \sum_{n=1}^{\floor{(D-1)/2}} \# E_n. \]
Since $\# E_n \geq \# E_1 \geq \frac 12 \# E$ the proof is complete.
}
\end{proof}

Bampas et al.\ \cite{lockin} prove a corresponding lower bound: on any finite undirected graph there exist a rotor mechanism $m$ and initial rotor configuration $\rho$ such that $t_{vertex} \geq \frac14 D \#E$. 



\old{
\begin{remark}
For graphs of small diameter the lower bound \eqref{e.lowercover} can be improved as follows. If $w$ is a vertex such that $G-\{w\}$ is connected, then for any rotor mechanism $m$ there is an initial rotor configuration $\rho$ such that
	\[ t_{vertex}(m,\rho) > \# E - 2 \deg(w). \]
Namely, one takes $\rho(v)=w$ for all neighbors $v$ of $w$, and the remaining rotors $\rho(u)$ for $u \not\sim w$ form a tree oriented toward $o$. Then the first excursion for rotor walk started at $o$ traverses all directed edges not incident to $w$, and no other edges.  This construction appears in \cite[Theorem~4.1]{fs}, but the condition that $G-\{w\}$ is connected is omitted there.
\end{remark}

\begin{remark}
After an initial transient segment, rotor walk repeatedly traces out an Eulerian tour of $G$ (hence the name \emph{Eulerlian walkers} \cite{pddk}). The Eulerian tour begins as soon as the rotors $\rho_t$ form a \emph{unicycle} (a spanning subgraph of $G$ with exactly one directed cycle, such that $X_{t}$ lies on the cycle) \cite[Lemma~4.9]{HLMPPW}.  At time $t$ the rotors $\rho(v)$ for $v \in R_t -\{X_t\}$ form an oriented spanning tree rooted at $X_t$, so an upper bound for the length $t_{trans}$ of the initial transient segment is
	\[ t_{trans} \leq \min \{t \,:\, \# R_t \geq \#V - 1 \}. \]
In particular, Theorem~\ref{t.cover} shows that $t_{trans} < D \# E$. 
\end{remark}
}

\subsection{Hitting times for random walk}
\label{s.hitting}

The upper bounds for $t_{vertex}$ and $t_{edge}$ in Theorem~\ref{t.cover} match (up to a constant factor) those found by Friedrich and Sauerwald \cite{fs} on an impressive variety of graphs: regular trees, stars, tori, hypercubes, complete graphs, lollipops and expanders. Intriguingly, the method of \cite{fs} is different: using a theorem of Holroyd and Propp \cite{holroyd-propp} relating rotor walk to the expected time $H(u,v)$ for \emph{random} walk started at $u$ to hit $v$, they infer that $t_{vertex} \leq K+1$ and $t_{edge} \leq 3K$, where
	\[ K := \max_{u,v \in V} H(u,v) + \frac{1}{2} \left( \# E + \sum_{(i,j) \in E} |H(i,v) - H(j,v) - 1| \right). \]
A curious consequence of the upper bound $t_{vertex} \leq K+1$ of \cite{fs} and the lower bound $\max_{m,\rho} t_{vertex}(m,\rho) \geq \frac14 D \# E$ of \cite{lockin} is the following inequality.

\begin{corollary}
\label{c.hitting}
For any undirected graph $G$ of diameter $D$ we have
	\[ K \geq \frac14 D \#E - 1. \]
\end{corollary}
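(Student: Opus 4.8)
The plan is to obtain the inequality as a direct sandwich of the two cited results, exploiting the mismatch in their quantifiers over rotor data. The crucial point is that $K$ is an intrinsic invariant of the undirected graph $G$: it is defined purely through the random-walk hitting times $H(u,v)$ and does not depend on the rotor mechanism $m$ or the initial configuration $\rho$. Thus any inequality involving $K$ and some $t_{vertex}(m,\rho)$ can be strengthened by optimizing over $m$ and $\rho$ on the rotor-walk side while leaving $K$ untouched.

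First I would invoke the Friedrich--Sauerwald upper bound $t_{vertex} \leq K+1$. This holds for \emph{every} rotor mechanism $m$ and \emph{every} initial configuration $\rho$, so taking the maximum over all such choices preserves it:
\[ \max_{m,\rho} t_{vertex}(m,\rho) \leq K+1. \]
Next I would invoke the Bampas et al.\ lower bound, which is existential: there \emph{exist} a mechanism $m$ and configuration $\rho$ with $t_{vertex}(m,\rho) \geq \frac14 D \#E$. Hence
\[ \max_{m,\rho} t_{vertex}(m,\rho) \geq \frac14 D \#E. \]
Chaining the two displays gives $\frac14 D \#E \leq K+1$, and rearranging yields the claimed bound $K \geq \frac14 D \#E - 1$.

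I do not expect any genuine obstacle here, since all the substantive work is contained in the two quoted theorems; the corollary is essentially a bookkeeping step. The only thing requiring care is the alignment of quantifiers described above: the universal nature of the Friedrich--Sauerwald bound (valid for all $m,\rho$) and the existential nature of the Bampas et al.\ bound (achieved by some $m,\rho$) are exactly complementary, so that both inequalities refer to the same quantity $\max_{m,\rho} t_{vertex}(m,\rho)$ and can be composed without loss. If anything, one might remark that this shows the two cited bounds are, up to the additive constant and the factor of $4$, as tight as they could be simultaneously.
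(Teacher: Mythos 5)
Your proposal is correct and is exactly the paper's argument: the corollary is stated there as an immediate consequence of chaining the universal Friedrich--Sauerwald bound $t_{vertex}(m,\rho) \leq K+1$ against the existential Bampas et al.\ bound $\max_{m,\rho} t_{vertex}(m,\rho) \geq \frac14 D \#E$, yielding $K \geq \frac14 D \#E - 1$. Your explicit remark that $K$ depends only on $G$ (not on $m$ or $\rho$), so the quantifiers compose correctly, is the only point of substance, and the paper treats it as implicit.
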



Is $K$ always within a constant factor of $D \# E$? It turns out the answer is no. To construct a counterexample we will build a graph $G = G_{\ell,N}$ of small diameter which has so few long-range edges that random walk effectively does not feel them  (Figure~\ref{f.thickcycle}). Let $\ell,N \geq 2$ be integers and set $V = \{1,\ldots,\ell\} \times \{1,\ldots,N\}$ with edges $(x,y) \sim (x',y')$ if either $x' \equiv x \pm 1$ (mod $\ell$) or $y'=y$. The diameter of $G$ is $2$: any two vertices $(x,y)$ and $(x',y')$ are linked by the path $(x,y) \sim (x+1,y') \sim (x',y')$.  Each vertex $(x,y)$ has $2N$ short-range edges to $(x\pm 1, y')$ and $\ell-3$ long-range edges to $(x',y)$.  We will argue that if $\ell$ is sufficiently large and $N=\ell^5$, then $K > \frac{1}{10} \ell \# E$, showing that $K$ can exceed $D \# E$ by an arbitrarily large factor.

\begin{figure}
\captionsetup{width=0.8\textwidth}
\centering
\begin{tikzpicture}
         \vertex[color=black,circle,draw,inner sep =1pt,minimum size=4pt] (a1) at (-2,1) {$1$};
         \vertex[color=black,circle,draw,inner sep =1pt,minimum size=4pt] (a2) at (-1,1.5) {$2$};

         \vertex[color=black,circle,draw,inner sep =1pt,minimum size=4pt] (b1) at (0,1) {$1$};
         \vertex[color=black,circle,draw,inner sep =1pt,minimum size=4pt] (b2) at (1,1.5) {$2$};

	      \vertex[color=black,circle,draw,inner sep =1pt,minimum size=4pt] (c1) at (-2,-0.5) {$1$};
         \vertex[color=black,circle,draw,inner sep =1pt,minimum size=4pt] (c2) at (-1,-1) {$2$};
         
               \vertex[color=black,circle,draw,inner sep =1pt,minimum size=4pt] (d1) at (0,-0.5) {$1$};
         \vertex[color=black,circle,draw,inner sep =1pt,minimum size=4pt] (d2) at (1,-1) {$2$};


         
         \draw (a1) -- (b1);
          \draw (a1) -- (c1);
         \draw[dashed] (a1) -- (d1);
                  
          \draw[dashed] (b1) -- (c1);
         \draw (b1) -- (d1);
          \draw (c1) -- (d1);

                \draw (a2) -- (b2);
          \draw (a2) -- (c2);
         \draw[dashed] (a2) -- (d2);
                       
          \draw[dashed] (b2) -- (c2);
         \draw (b2) -- (d2);
          
         \draw (c2) -- (d2);
         
         \draw (a1) -- (b2); 
         \draw (a2) -- (b1);
         
         \draw (a1) -- (c2);
         \draw (a2) -- (c1);

          \draw (b1) -- (d2);
          \draw (b2) -- (d1);
          
          \draw (d1) -- (c2);
          \draw (d2) -- (c1);
          
\end{tikzpicture}
\caption{The thick cycle $G_{\ell,N}$ with $\ell=4$ and $N=2$. Long range edges are dotted and short range edges are solid.
\label{f.thickcycle}}
\end{figure}
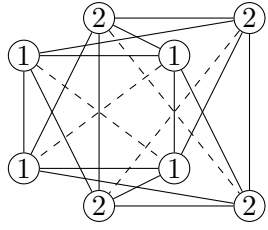

\old{

The main point will be to lower bound the hitting time difference $|H(i,v)-H(j,v)+1|$ for neighboring sites $i,j \in V$. As a preliminary step, note that for any $i,j\in V$, not necessarily neighbors, we have the upper bound
	\[ \EE |\tau(i,v)-\tau(j)| < \ell^2 \]
achieved by the following coupling: if we use a short-range edge at time $t$ then we take $Y^i_t = Y^j_t$ and we let $X^i_t, X^j_t$ perform independent lazy random walk steps on the $\ell$-cycle.  If we use a long-range edge at time $t$ then we can take $(X^i_t, Y^i_t) = (X^j_t, Y^j_t)$.  Then $|H(i,v) - H(j,v)|$ is at most the expected time for two independent lazy random walks to meet on the $\ell$-cycle, which is at most $\ell^2$.
}

Write $Z^i_t = (X^i_t,Y^i_t)$ for random walk on $G$ started at $i$. 
We couple the walks $(Z^i_t)_{t \geq 0}$ and $(Z^j_t)_{t \geq 0}$ as follows. At each time $t$ either both walks will use a short-range edge or both walks will use a long-range edge.
If they use a short-range edge, then we move the $X$ coordinates independently and take $Y^i_t = Y^j_t$.  If they use a long-range edge, then we take $X^i_t = X^j_t$.

Let $T_{short}$ be the first time a short-range edge is used, and consider the hitting times
 	\[ \tau(i) = \min \{t \geq 1 \,:\, (X^i_t, Y^i_t)=(1,1)\} \]
	\[ \sigma(i) = \min \{t \geq T_{short} \,:\,  X^i_t = 1\}. \]
Now fix starting vertices $i$ and $j$ with $i_2, j_2 >1$. Then neither walk can hit $v=(1,1)$ before time $T_{short}$, so $\sigma(i) \leq \tau(i)$. Decompose the hitting time $\tau(i)$ into two pieces
	\[ \tau(i) = \sigma(i) + \sigma_2(i). \]
Both $Y^i_{\sigma(i)}$ and $Y^j_{\sigma(j)}$ are uniformly distributed on $\{1,\ldots,L\}$, so $\EE \sigma_2(i) = \EE \sigma_2(j)$. Hence
	\begin{equation} \label{e.hdiff} H(i,v) - H(j,v) 
	= \EE \sigma(i) - \EE \sigma(j). \end{equation}
To estimate the right side, we couple the random walks on $G$ to random walks $\til X^i_t$, $\til X^j_t$ on the $\ell$-cycle as follows.
Let $T_{long}$ be the first time a long-range edge is used. For $k \in \{i,j\}$ let 
$\til X^k_t = X^k_t$ for $0 \leq t < T_{long}$, and let the increments $\til X^k_t - \til X^k_{t-1}$ be independent of $X$ for $t \geq T_{long}$. Let
	\[ \til \sigma (k) = \min \{t \geq 1 \,:\, \til X^k_t = 1\}. \]
On the event
\[ \mathcal{A} := \{ \max(\sigma(i), \sigma(j)) < T_{long} \} \]
we have $T_{short}=1$ and $\til \sigma(k) = \sigma(k)$ for $k \in \{i,j\}$. Hence
	\begin{equation} \label{e.comparison} \EE | \sigma(i) - \sigma(j) - (\til \sigma(i) - \til \sigma(j)) | \leq \PP(\mathcal{A}^c) \left(\max_{i',j' \in V} \EE |\sigma(i')-\sigma(j')| + \EE (\til \sigma(i) + \til \sigma(j)) \right). \end{equation}

Since the probability of using a long-range edge at each fixed time $t$ is $(\ell-3)/(2N+\ell-3)$, we have
	\[ \PP(\mathcal{A}^c) \leq \EE \sum_{t=0}^\infty 1 \{t \leq \max(\sigma(i), \sigma(j)) \} \frac{\ell-3}{2N+\ell-3} < \EE (\sigma(i) + \sigma(j)) \frac{\ell}{2N}. \]
Now we use the explicit formula for random walk on the $\ell$-cycle, $\EE \til \sigma(i) = (i_1-1) (\ell-i_1+1)$. In particular, we have 	
	\[ |\EE \til \sigma(i) - \EE \til \sigma(j))| \geq \frac{\ell}{3} \qquad \forall (i,j) \in E_1 \]
where
	\[ E_1 := \left \{ (i,j) \in E \,: |i_i - j_1| = 1, \, \, i_2>1, \, j_2>1, \, \left| i_1 - \frac{\ell}{2} \right| > \frac{\ell}{5} \right \}. \]
Now take $N= \ell^5$ and $\ell$ sufficiently large. Since $\max_{i' \in V} \EE \sigma(i') < \ell^2/2$, the right side of \eqref{e.comparison} is $<1$. Therefore by \eqref{e.hdiff} we have
	 \[ K > \sum_{(i,j) \in E_1} |H(i,v) - H(j,v)+1| > (\frac12 \# E) (\frac{\ell}{3} - 2) > \frac{\ell}{10} \# E \]
as desired.

Note that Corollary~\ref{c.hitting} is a fact purely about random walk on a graph. Can it be proved without resorting to rotor walk?

\section*{Acknowledgements}
This work was initiated while the first two authors were visiting Microsoft Research in Redmond, WA. We thank Sam Watson for help with some of the simulations, and Tobias Friedrich for bringing to our attention references \cite{lockin} and \cite{patrol}.

\bibliographystyle{plain}
  \bibliography{range}

\end{document}